\newtheorem{example}{Example}
\newtheorem{remark}{Remark}
\newtheorem{assumption}{Assumption}
\newtheorem{definition}{Definition}
\newtheorem{theorem}{Theorem}
\newtheorem{lemma}{Lemma}
\newtheorem{corollary}{Corollary}
\newcommand{\N}{\mathbb{N}}
\newcommand{\R}{\mathbb{R}}
\newcommand{\K}{\mathcal{K}}
\newcommand{\gammae}{\eta_{ij}}%
\newcommand{\gammaeo}{\eta}%
\newcommand{\dst}{\displaystyle}
\def\be{\begin{equation}}
\def\ee{\end{equation}}
\def\ba{\begin{array}}
\def\ea{\end{array}}
\def\eqa{\begin{eqnarray}}
\def\eqe{\end{eqnarray}}
\author{Claudio De Persis{\thanks{Department of Computer and System Sciences,
Sapienza University of Rome, Italy; {\tt depersis@dis.uniroma1.it} and Faculty of Mathematics and Natural Sciences, University of Groning, 9747 AG Groningen, Netherlands; {\tt c.de.persis@rug.nl}}}, Rudolf Sailer, Fabian Wirth{\thanks{R. Sailer and F. Wirth are with Faculty of Mathematics,
         University of W\"urzburg, 97074 W\"urzburg, Germany
         {\tt \{sailer,wirth\}@mathematik.uni-wuerzburg.de}}}}
\title{On a small-gain approach to distributed event-triggered control}
\begin{document}
\maketitle
\begin{abstract}
\noindent In this paper the problem of stabilizing large-scale systems by
distributed controllers, where the controllers exchange
information via a shared limited communication medium is
addressed.  Event-triggered sampling schemes are proposed, where
each system decides when to transmit new information across the
network based on the crossing of some error thresholds. Stability
of the interconnected large-scale system is inferred by applying a
generalized small-gain theorem. Two variations of the
event-triggered controllers which prevent the occurrence of the
Zeno phenomenon are also discussed.
\end{abstract}
\section{Introduction}
We consider large-scale systems stabilized by distributed controllers, which communicate over a limited shared medium.
In this context it is of interest to reduce the communication load.
An approach in this direction is event-triggered sampling, which
attempts to send data only at ``relevant times". In
order to treat the large-scale case, input-to-state
stability (ISS) small-gain results in the presence of
event-triggering decentralized controllers are presented.\\
The stability (or stabilization) of large-scale interconnected
systems is an important problem which has attracted much  interest.
In this context the small-gain theorem was extended to the interconnection of several
${\cal L}_p$-stable subsystems. Early accounts of this approach
are \cite{vidyasagar1981} (see also \cite{Siljak1978large}) and
references therein. For instance, in \cite{vidyasagar1981},
Theorem 6.12, the influence of each subsystem on the others is
measured via an ${\cal L}_p$-gain, $p\in [1,\infty]$ and the
${\cal L}_p$-stability of the interconnected  system holds
provided that the spectral radius of the matrix of the gains is
strictly less than unity. In other words, the stability of
interconnected ${\cal L}_p$-stable systems holds under a condition
of weak coupling.\\
In the nonlinear case a notion of robustness
with respect to exogenous inputs is input-to-state stability (ISS) (\cite{sontag_smooth_1989}).
If in a large-scale system each subsystem is ISS, then the influence between the subsystems
is typically modeled via nonlinear gain functions. Small-gain
theorems have been developed for ISS systems as well
(\cite{jiang_small-gain_1994,jiang.et.al.aut96,teel.tac96}) and more recently
they have been extended to the interconnection of several ISS
subsystems (\cite{dashkovskiy2007iss,drw}).
For a recent comprehensive discussion about the literature on ISS small-gain results see \cite{liu.et.al.nolcos2010}.\\
In the literature on large-scale systems we have discussed so far,
the communication aspect does not play a role. If however, a
shared communication medium leads to significant further
restrictions, concepts like event-triggering become of interest.
We speak of event-triggering if the occurrence of predefined
events, as e.g. the violation of error bounds, triggers a
communication attempt. Using this approach a decentralized way of
stabilizing large-scale networked control systems
which are finite ${\cal L}_p$-stable has been
proposed in \cite{wang2009event,wang2011}. In these papers each
subsystem broadcasts information when a locally-computed error
signal exceeds a state-dependent threshold. Similar ideas are
presented in \cite{tabuada.tac07,wang2009self}. Numerical
experiments e.g., \cite{wang2009self} show that event-triggered
stabilizing controllers can lead to less information transmission
than
standard sampled-data controllers. For consensus problems, event-triggered
controllers are studied in \cite{dim.johan.cdc09}.
\\
One drawback of the proposed event-triggered sampling scheme is the need for constantly checking the validity of an inequality.
A related approach which tries to overcome this issue is termed self-triggered sampling (see e.g., \cite{anta2009sample,mazo2010iss}).\\
{}From a more general perspective, the way
in which the subsystems access the medium must be carefully
designed. In this paper we do not discuss the problem of collision avoidance. This problem is addressed for instance in the literature on medium access protocols, such as the
round-robin and the try-once-discard protocol. E.g., in \cite{nesic.teel.tac04} a large
class of medium access protocols are treated as
dynamical systems and the stability analysis  in the presence of
communication constraints is carried out by including the
protocols in the closed-loop system. This allows to give an estimate on the maximum allowable transfer
interval (MATI), that is the maximum interval of time between two
consecutive transmissions which the system can tolerate without
going into instability. The advantage of event-triggering lies in the possibility of reducing overall communication load. However, if events occur simultaneously at several subsystems the problem of collision avoidance remains. We will discuss this in future work.\\
The purpose of this paper is to explore event-triggered
distributed controllers for systems which are given as an
interconnection of a large number of ISS subsystems.
Since input-to-state stability  and finite ${\cal L}_p$ stability
are distinct properties for nonlinear systems, the class of
systems under consideration in this paper differs from the one in
\cite{wang2009event,wang2011}. Moreover, we use analytical tools
which have been extended to deal with other classes of systems
(such as integral-input-to-state stable systems
\cite{ito.et.al.cdc09} and hybrid systems
\cite{liu.et.al.nolcos2010}), and therefore the arguments in this
paper are potentially applicable to a larger class of systems than
the one actually considered here.\\
We assume that the gains
measuring the degree of interconnection satisfy a generalized
small-gain condition. To simplify presentation, it is assumed
furthermore that the graph modeling the interconnection structure
is strongly connected. This assumption can be removed as in
\cite{drw}. Since our event-triggered implementation of the
control laws introduces disturbances into the system, the ISS
small-gain results available in the literature are not applicable.
An additional condition is required for general nonlinear systems
using event-triggering. This condition is explicitly given in the
presented general small-gain theorem. Moreover, the functions
which are needed to design the state-dependent triggering
conditions are explicitly designed in such a way that the
triggering events which supervise the broadcast by a subsystem
only depend on local information. As an introductory example we
explicitly discuss the special case of linear systems, although
for this class of systems the techniques of \cite{wang2009event,wang2011} are applicable.\\
As distributed event-triggered controllers can potentially require
transmission times which accumulate in finite time,
we also discuss two variations of the proposed small-gain
event-triggered control laws which prevents the occurrence of the
Zeno phenomenon. Related papers
are also \cite{donkers.heemels.ifac11}, \cite{mazo.tabuada.arxiv2010}.\\
Section \ref{s.prelim} presents the class of system we focus our
attention on, along with a number of preliminary notions and standing
assumptions. The definition of the term event-triggered control can be found in Section~\ref{sec:trig}.\\
In Section~\ref{sec:lyap} the notion of ISS-Lyapunov functions is presented.
Based on this notion
small-gain event-triggered distributed controllers are discussed
in Section \ref{s.detc}. The results are particularized to
the case of linear systems in Section~\ref{sec:ex} along with a few simulation results in
Section \ref{sect.linear}. A nonlinear example together with simulation results is discussed in Section~\ref{sec:nonlinear}.\\
The Zeno-free distributed
event-triggered controllers are proposed in Section
\ref{sec.towards}. The last section contains the conclusions of
the paper.\\

\noindent {\bf Notation} $\mathbb{N}_0=\mathbb{N}\cup \{0\}$.
$\R_+$ denotes the set of nonnegative
real numbers, and $\R_+^n$ the nonnegative orthant, i.e.\ the set
of all vectors of $\R^n$ which have all entries nonnegative.
By $||\cdot||$ we denote the Euclidean norm of a vector or a matrix.
\\
A function $\alpha:\R_+\to\R_+$ is a  class-${\cal K}$ function if
it is continuous, strictly increasing and zero at zero. If it is
additionally unbounded, i.e.\ $\lim_{r\to+\infty} \alpha(r)
=\infty$, then $\alpha$ is said to be a  class-${\cal K}_\infty$
function. We use the notation $\alpha \in  {\cal K}$ ($\alpha \in
{\cal K}_\infty$) to say that $\alpha$ is a class-${\cal K}$
(class-${\cal K}_\infty$) function. The symbol ${\cal K}\cup
\{0\}$ (${\cal K}_\infty\cup \{0\}$) refers to the set of
functions which include all the class-${\cal K}$ (class-${\cal
K}_\infty$) functions and the function which is identically zero.
A function $\alpha:\R_+\to\R_+$ is positive definite if
$\alpha(r)=0$ if and only if $r=0$.
We denote the right-hand limit by $\lim_{t\searrow \tau}x(t)=x(\tau^+)$.
\section{Preliminaries}\label{s.prelim}
Consider the interconnection of $N$ systems described by equations
of the form:
\be\label{interconnected.system}
\ba{rcl}
\dot x_i &=& f_i(x, u_i)\\
u_i &=& g_i(x+e)\;,
\ea\ee
where $i\in {\cal N}:=\{1,2,\ldots, N\}$,
$x=(x_1^\top\ldots x_N^\top)^\top$, with $x_i\in \R^{n_i}$, is the state
vector and $u_i\in \R^{m_i}$ is the $i$th control input. The
vector $e$, with $e=(e_1^\top\ldots e_N^\top)^\top$ and $e_i\in \R^{n_i}$, is
an error affecting the state. We shall assume that the maps $f_i$
satisfy appropriate conditions which guarantee existence and
uniqueness of solutions for ${\cal L}_\infty$ inputs $e$.
In particular, the $f_i$ are continuous. Also we assume that the
$g_i$ are locally bounded, i.e. for each compact set
$K\subset\R^n$ ($n:=\sum_{i=1}^N n_i$) there exists a constant
$C_K$ with $\|g_i(x)\|\leq C_K$ for each $x\in K$.\\
The interconnection of each system $i$ with another system $j$ is
possible in two ways. One way is that the system $j$ influences
the dynamics of the system $i$ directly, meaning that  the state
variable $x_j$ appears non trivially in the function $f_i$. The
other way is that the controller $i$ uses information from
system $j$. In this case, the state variable $x_j$ appears non
trivially in the function $g_i$ (and affects indirectly the
dynamics of the system $i$).\\
In this paper we adopt the notion of ISS-Lyapunov functions
(\cite{sontag.wang.scl95}) to model the interconnection among the
systems.
\begin{definition}
\label{def:iss}
A smooth function $V\;:\;\R^n\rightarrow\R_+$ is called an \emph{ISS-Lyapunov} function for system $\dot x=f(x,u)$ if there exist $\alpha_1,\,\alpha_2\in\K_\infty$ and $\alpha_3,\,\chi\in\K$, such that for any $x\in\R^n$
\[
\alpha_1(\|x\|)\leq V(x)\leq\alpha_2(\|x\|)
\]
and the following implication holds for all $x\in\R^n$ and all admissible $u$
\[
   V(x)\geq\chi(\|u\|)\Rightarrow \nabla V(x)f(x,u)\leq -\alpha_3(\|x\|)\;.
\]
\end{definition}
It is well known that a system as in Definition~\ref{def:iss} is ISS if and only if it admits an ISS-Lyapunov function.
If there are more than one input present in the system, the question how to compare the influence of the different inputs arises.
To answer this question we preliminary recall the notion of
monotone aggregate functions from \cite{drw}:\footnote{In the
definition below, for any pair of vectors $v,z\in \R^n$, the
notations $v\ge z$, $v>z$ are used to express the property that
$v_i\ge z_i$, $v_i>z_i$ for all $i=1,2,\ldots, n$. Moreover, the
notation $v\gneqq z$ indicates that $v\ge w$ and $v\ne w$.}
\begin{definition}\label{def.maf}
A continuous function $\mu: \R^n_+\to \R_+$ is a {\em monotone
aggregation function} if:
\begin{description}
\item{(i)} $\mu(v)\ge 0$ for all $v\in \R^n_+$ and $\mu(v)>0$ if $v\gneqq
0$;
\item{(ii)} $\mu(v)> \mu(z)$ if $v>z$;
\item{(iii)} If $||v||\to \infty$ then $\mu(v)\to \infty$.
\end{description}
The space of monotone aggregate functions (MAFs in short) with
domain $\R^n_+$ is denoted by $MAF_n$. Moreover, it is said that
$\mu\in MAF_n^m$ if for each $i=1,2,\ldots, m$, $\mu_i\in MAF_n$.
\end{definition}
Monotone aggregate function are used in the following assumption
to specify the way in which systems are interconnected and how
controllers use information about the other systems:
\begin{assumption}\label{a1}
For $i=1,2,\ldots, N$,  there exists a differentiable function
$V_i: \R^{n_i}\to \R_{+}$, and class-${\cal K}_\infty$ functions
$\alpha_{i1}, \alpha_{i2}$ such that
\[
\alpha_{i1}(||x_i||)\le V_i(x_i)\le \alpha_{i2}(||x_i||)\;.
\]
Moreover there exist functions $\mu_i\in {\rm MAF}_{2N}$,
$\gamma_{ij}, \gammae\in {\cal K}_\infty\cup \{0\}$,
$\alpha_i$ positive
definite such that
\begin{equation}
    \label{aa1}
\begin{aligned}
    V_i(x_i)\ge &\, \mu_i(\gamma_{i1}(V_1(x_1)),
\ldots,\gamma_{iN}(V_N(x_N)),
\gammaeo_{i1}(||e_1||), \ldots,
\gammaeo_{iN}(||e_N||))
\\
\Rightarrow &\, \nabla V_i(x_i) f_i(x, g_i(x+e))\le -
\alpha_i(||x_i||)\;.
\end{aligned}
\end{equation}
\end{assumption}\vspace{-5pt}
Loosely speaking, the function $\gamma_{ij}$ describes the overall
influence of system $j$ on the dynamics of system $i$, while the
function $\gammae$ describes the influence of the system $j$ on
the system $i$ via the controller $g_i$. In particular,
$\gammae\ne 0$ if and only if the controller $u_i$ is using
information from the system $j$. In this regard $\gammae$
describes the influence of the imperfect knowledge of the state of
system $j$ on system $i$ caused by e.g., measurement noise. On the
other hand, if $i\ne j$ and $\gamma_{ij}\ne 0$, then the system
$j$ influences the system $i$ (either explicitly or implicitly).
We assume that $\gamma_{ii}=0$ for any $i$. Observe that if the
system $i$ is not influenced by any other system $j\ne i$, and
there is no error $e_i$ on the state information $x_i$ used in the
control $u_i$, then the assumption amounts to saying that the
system $i$ is input-to-state stabilizable via state feedback.
\begin{remark} In general it is hard to design
controllers that render the closed loop system ISS as we demand in
Assumption~\ref{a1}. Though, there exist design techniques for
special classes of systems. See e.g.,
\cite{krstic1995nonlinear,liberzon2002universal,fah.cocv99} and
the references therein.
\end{remark}
For future use we denote the set of states entering
the dynamics of system $i$ by
\[
\Sigma(i)=\{j\in\mathcal{N}\;:\;f_i \text{ depends explicitly on
}x_j\}\,,
\]
where explicit dependence of $f_i$ on $x_j$ means that $�\partial f_i
/ \partial x_j \not \equiv 0$. Similarly for the controllers we denote
\[
C(i)=\{j\in\mathcal{N}\;:\; g_i\text{ depends explicitly on
}x_j\}\;.
\]
It is also convenient to define the set of the controllers to
which the state of system $i$ is broadcast
\[
Z(i)=\{j\in\mathcal{N}\;:\;g_j\text{ depends explicitly on
}x_i\}\;.
\]
\subsection{The case of linear systems}
\label{sec:ex}
To get acquainted with the assumption above, we examine in the
following example the case in which the systems are linear.
\begin{example}
Consider the interconnection of $N$ linear subsystems
\[\ba{rcl}
\dot x_i &=& \sum_{j=1}^N A_{ij} x_j + B_i u_i\\
u_i &=& \sum_{j=1}^N K_{ij} (x_j+e_j)\;.
\ea\]
For each index $i$, we assume that the pairs $(A_{ii}, B_i)$ are
stabilizable and we let the matrix $K_{ii}$ be such that $\bar
A_{ii}:=A_{ii}+B_i K_{ii}$ is Hurwitz. Then for each $Q_i=Q_i^\top>0$
there exists a matrix $P_i=P_i^\top>0$ such that $\bar A_{ii}^\top
P_i+P_i\bar A_{ii}=-Q_i$ leading to Lyapunov functions $V_i(x_i)=x_i^\top P_ix_i$.\\ We consider now the expression $\nabla
V_i(x_i) \dot x_i$ where
\[\ba{rcl}
\dot x_i &=& \sum_{j=1}^N (A_{ij}+ B_i K_{ij}) x_j + \sum_{j=1}^N
B_i K_{ij} e_j\\
&=:& \sum_{j=1}^N \bar A_{ij} x_j + \sum_{j=1}^N \bar B_{ij} e_j\;,
\ea\]
with $\bar B_{ij}:=B_iK_{ij}$ and $\bar A_{ij}:=A_{ij}+B_iK_{ij}$.\\
Standard calculations lead to
\begin{multline*}
\nabla V_i(x_i) \dot x_i \le -c_i ||x_i||^2 + 2 ||x_i||\,||P_i||
\Big(\sum_{j=1, j\ne i}^N ||\bar A_{ij}||\, ||x_j||+ \sum_{j=1}^N
||\bar B_{ij}||\, ||e_j||\;\Big)\;,
\end{multline*}
where\footnote{For symmetric $Q_i$ we let $\lambda_{\min}(Q_i)$ denote the
smallest eigenvalue of $Q_i$.} $c_i=\lambda_{\min}(Q_i)$.
Moreover, for any $0< \tilde c_i< c_i$ the inequality
\[
||x_i||\ge \frac{2||P_i||}{\tilde c_i} \Big(\sum_{j=1, j\ne i}^N
||\bar A_{ij}||\, ||x_j||+ \sum_{j=1}^N ||\bar B_{ij}||\, ||e_j||\;\Big)
\]
implies that
\[
\nabla V_i(x_i) \dot x_i \le -(c_i-\tilde c_i) ||x_i||^2\;.
\]
The former inequality is implied by
\begin{multline*}
V_i(x_i)\ge ||P_i||^3\,\cdot \Big[\frac{2}{\tilde c_i} \Big(\sum_{j=1, j\ne i}^N
\frac{||\bar A_{ij}||}{[\lambda_{\min}(P_j)]^{1/2}}\, V_j(x_j)^{1/2}
+ \sum_{j=1}^N ||\bar B_{ij}||\, ||e_j||\;\Big)\Big]^2\;.
\end{multline*}
We conclude that (\ref{aa1}) holds with
\begin{equation}
 \label{eq:gain_lin}
 \left.
\ba{rcl}
\gamma_{ii}&=& 0\\
\gamma_{ij}(r) &=& \frac{2||P_i||^{3/2}}{\tilde c_i}\frac{||\bar A_{ij}||}{[\lambda_{\min}(P_j)]^{1/2}}\, r^{1/2}\\
\gammae(r) &=& \frac{2||P_i||^{3/2}}{\tilde c_i}||\bar B_{ij}||\, r\\
\mu_i(s) &=&\Big(\sum_{j=1}^{2n} s_j\;\Big)^2\\
\alpha_i(r)&=&({c_i-\tilde c_i}) r^2\;.\\
\ea
\right\}
\end{equation}
It is important to remember that not all the functions
 $\gamma_{ij}$ and $\gammae$ are non-zero. Namely,
 $\gamma_{ij}$ ($i\neq j$) is non-zero if and only if $\bar A_{ij}$
is a non-zero matrix.
Similarly, $\eta_{ij}\neq 0$ if and only if $\bar B_{ij}\neq 0$.
\end{example}

\medskip
\section{Event-triggered control}
\label{sec:trig}
In this paper we investigate event-triggered control schemes. Such schemes (or similar) have been studied in \cite{anta2009sample,mazo2010iss,tabuada.tac07,wang2009event,wang2009self,wang2011}.\\
We consider systems as defined in \eqref{interconnected.system}.
Combined with a triggering scheme the setup under consideration has the form
\begin{align}
\label{def:trigsys}
 \notag\dot x_i&=f_i(x,u_i)\\
\notag u_i&=g_i(x+e)\\
\dot{\hat x}&=0\\
\notag e&=\hat x-x\\
\intertext{with triggering condition}
 T_i&(x_i,e_i)\geq 0\;.
\end{align}
Here $x_i$ is the state of system $i\in\mathcal{N}$, $\hat x$ is the information available at the controller and the controller error is $e=\hat x-x$.
We assume that the triggering function $T_i$ are
jointly continuous in $x_i,\,e_i$
and satisfy $T_i(x_i, 0)<0$ for all $x_i\neq0$.\\
Solutions to such a triggered feedback are defined as follows. We
assume that the initial controller error is
$e_0=0$. Given an initial condition $x_0$ we define
\[t_1:=\inf\{t>0\;:\;\exists i\in\mathcal{N}\;{ \rm s.t. }\;T_i(x_i(t),e_i(t))\geq 0\}\;.\]
At time instant $t_1$ the systems $j$ for which $T_j(x_j,e_j)=0$ broadcast their respective state $x_j$ to all controllers with $j\in C(i)$. In particular, $e_j(t_1^+)=0$ for these indices $j$.\\
Then inductively we set for $k=1,2,\dots$
\[t_{k+1}:=\inf\{t>t_k\;:\;\exists i\in\mathcal{N}\;{ \rm s.t. }\;T_i(x_i(t),e_i(t))\geq0\}\;.\]
We say that the triggering scheme induces Zeno behavior if for a given initial condition
$x_0$ the event times $t_k$ converge to a finite $t^*$.


\begin{remark}
~\\[-4ex]
\label{rem:trigg}
 \begin{itemize}
\item One of the proposed
triggering schemes in this paper uses the information
$d_i$ which is an estimate of $\|\dot x_i\|$ available at system $i$.
 For this scheme the
triggering condition will be replaced by $T_i(x_i,e_i,d_i)\ge 0$.
\item The condition $e_0=0$ is used for simplicity. The triggering scheme uses implicitly that system $i$ knows its state $x_i$ and the error at the controller $e_i$ (and possibly the estimate $d_i$ if this is used). It would therefore be sufficient to have an initial condition where system $i$ is aware of $e_{0i}=\hat x_i(0)-x_i(0)$. However, such an assumption is most likely guaranteed by an initial broadcast of all states of the subsystems. But then $e_0=0$ is plausible.
\item
 It is a standing assumption in
this paper that information transmission is reliable, so that
broadcast information is received instantaneously and error free
by the controllers. If this is not the case, additional techniques
as studied e.g.\ in \cite{tiwari.et.al} have to be employed. This
will be the topic of future research.
\item In many useful triggering conditions we have that $T_i(0,0,d_i)=0$.
If the system were to remain at $x=0$ this would lead to a
continuum of triggering events, which do not provide information.
To avoid this (academic) problem we propose to add the condition
 that information is broadcast
once $x_i$ reaches the state zero, but no further transmission by
system $i$ occurs as long as it stays at zero.
\item For simplicity, we assume $\dot{\hat x}=0$ in between triggering times. Usually, this is referred to as zero order hold.\\
Other techniques are also possible, which could lower the
triggering frequency.
Consider for instance the case that each controller has a model for the dynamics of each other subsystem. Then each controller could use these models to calculate $\hat x$ rather than keeping it constant. Another approach would be to extrapolate $\hat x$ linearly with the help of the last values for $\hat x$. This is known as predictive first order hold.
Both techniques would lead to $\dot{\hat x}\neq0$. The considerations in this paper would also hold true for these cases with slight modifications of the proofs.

 \end{itemize}

\end{remark}

\section{ISS Lyapunov functions for large-scale systems}
\label{sec:lyap}
In this section we review a general procedure for the construction of ISS Lyapunov functions. In particular, we extend recent results to a more general case that covers the case of event-triggered control.\\
Condition (\ref{aa1}) can be used to naturally build a graph which
describes how the systems are interconnected. Let us introduce the
matrix of functions $\Gamma \in ({\cal K}_\infty\cup
\{0\})^{N\times N}$ defined as
\[
\Gamma=\left(\ba{ccccc}
0 & \gamma_{12} & \gamma_{13} & \ldots & \gamma_{1N}\\
\gamma_{21} & 0 & \gamma_{23} & \ldots & \gamma_{2N}\\
\vdots & \vdots & \vdots & \ddots & \vdots\\
\gamma_{N1} & \gamma_{N2} & \gamma_{N3} & \ldots & 0
\ea
\right)\;.
\]
Following \cite{drw}, we associate to $\Gamma$ the adjacency
matrix $A_\Gamma=[a_{ij}]\in \{0,1\}^{N\times N}$ whose entry
$a_{ij}$ is zero if and only if $\gamma_{ij}=0$, otherwise it is
equal to $1$. $A_\Gamma$ can be interpreted as the adjacency
matrix of the graph which has a set ${\cal N}$ of $N$ nodes, each
one of which is associated to a system  of
(\ref{interconnected.system}), and a set of edges ${\cal
E}\subseteq {\cal N}\times {\cal N}$ with the property that
$(j,i)\in {\cal E}$ if and only if $a_{ij}=1$. Recall that a graph
is strongly connected if and only if the associated
adjacency matrix is irreducible. In the present case, if the
adjacency matrix $A_\Gamma$ is irreducible, then we say that
$\Gamma$ is irreducible. In other words, the matrix of functions
$\Gamma$ is said to be irreducible if and only
if the graph associated to it is strongly connected.
For later use, given $\mu_i\in {\rm MAF}_{N}$, $\gamma_{ij}\in
{\cal K}_\infty\cup \{0\}$, it is useful to introduce the map
$\Gamma_\mu:\R_+^{N}\to \R_+^N$ defined as
\[
\Gamma_\mu(r)=\left(\ba{c}
\mu_1(\gamma_{11}(r_1), \ldots, \gamma_{1N}(r_N))\\
\vdots\\
\mu_N(\gamma_{N1}(r_1), \ldots, \gamma_{NN}(r_N))\\
\ea\right).
\]
Since the functions which describe the interconnection of the
system are in general nonlinear, the topological property of
graph connectivity may not be sufficient to ensure stability
properties of the interconnected system. There must also be a way
to quantify the degree of coupling of the systems. In this paper,
this is done using the following notion:
\begin{definition}
A map $\sigma\in {\cal K}_\infty^N$ is an
$\Omega$-path with respect to $\Gamma_\mu$ if:
\begin{description}
\item{(i)} for each $i$, the function $\sigma_i^{-1}$ is locally Lipschitz continuous on $(0,\infty)$;
\item{(ii)} for every compact set $K\subset (0,\infty)$ there are constants $0<c<C$ such that for all $i=1,2,\ldots, N$
and all points of differentiability of $\sigma_i^{-1}$ we
have:
\[
0<c\le (\sigma_i^{-1})'(r)\le C\;,\quad \forall r\in K;
\]
\item{(iii)} $\Gamma_\mu(\sigma(r))<\sigma(r)$ for all $r>0$.
\end{description}
\end{definition}
Condition (iii) in the definition above amounts to a small-gain
condition for large-scale non-linear systems (in other words,
condition (iii) requires the degree of coupling among the
different subsystems to be weak. For a more thorough discussion on condition (iii) see \cite{drw}). To familiarize with the condition, take the case $N=2$
and $\mu_1=\mu_2=\max$ (it is not difficult to see that the
function $\max_{1\le i\le N} r_i$ belongs to $MAF_N$). Then
\[
\Gamma_\mu(r)=\left(\ba{c} \gamma_{12}(r_2)\\
\gamma_{21}(r_1) \ea \right)\;.
\]
We want to show that there exists $\sigma\in {\cal K}_\infty^2$
such that $\Gamma_\mu(\sigma(s))<\sigma(s)$ for all $s>0$ if and
only if $\gamma_{12}\circ \gamma_{21}(r)<r$ for all $r>0$
(the latter can be viewed as a small-gain condition for the
interconnection of two ISS-subsystems). To this purpose, choose
\[
\sigma(s)=\left(\ba{c} s\\
\sigma_{2}(s) \ea \right)\;,
\]
where $\gamma_{21}<\sigma_{2}<\gamma_{12}^{-1}$. As a consequence
of this choice, $\Gamma_\mu(\sigma(s))$ becomes:
\[
\Gamma_\mu(\sigma(s))=\left(\ba{c} \gamma_{12}(\sigma_2(s))\\
\gamma_{21}(s) \ea \right)\;.
\]
By construction, $\gamma_{12}(\sigma_2(s))<s=\sigma_1(s)$ and
$\gamma_{21}(s)<\sigma_{2}(s)$, i.e.
$\Gamma_\mu(\sigma(s))<\sigma(s)$ for all $s>0$.\\
Strong connectivity of $\Gamma$ and an additional condition
implies a weak coupling among all the systems, in the following
sense (see \cite{drw} for a proof and a more complete statement):
\begin{theorem}
Let $\Gamma \in ({\cal K}_\infty\cup \{0\})^{N\times N}$ and
$\mu\in \operatorname{MAF}_{N}^N$. If $\Gamma$ is irreducible and $\Gamma_\mu
\not \ge id$ \footnote{$\Gamma_\mu \not \ge id$ means that for all
$s\neq0$ $\Gamma_\mu(s) \not \ge s$, i.e.\ for all $s\in \R_+^N$ such
that $s\neq0$ there exists $i\in {\cal N}$ for which $\mu_i(s_1,
\ldots, s_N)<s_i$. } then there exists an
$\Omega$-path $\sigma$ with respect to $\Gamma_\mu$.
\end{theorem}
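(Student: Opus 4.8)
The plan is to follow the approach of \cite{drw}. It suffices to produce a continuous, componentwise strictly increasing curve $c:[0,\infty)\to\R_+^N$ with $c(0)=0$, $c(r)\to\infty$ in every component as $r\to\infty$, $c(r)\in\operatorname{int}\R_+^N$ for $r>0$, and $\Gamma_\mu(c(r))<c(r)$ for all $r>0$: a piecewise-affine realization then makes the components $\mathcal{K}_\infty$ functions whose inverses are locally Lipschitz on $(0,\infty)$ with derivatives bounded away from $0$ and $\infty$ on each compact set, which are exactly conditions (i)--(iii). I would first record the order structure of $\Gamma_\mu$: it is continuous; it is monotone, i.e.\ $s\le s'$ implies $\Gamma_\mu(s)\le\Gamma_\mu(s')$ (from condition (ii) of Definition~\ref{def.maf} together with continuity); each component $\Gamma_{\mu,i}$ is unbounded (irreducibility of $\Gamma$ forces a nonzero, hence $\mathcal{K}_\infty$, gain in every row, and condition (iii) of Definition~\ref{def.maf} propagates its growth through $\mu_i$); and, crucially, it satisfies a boundary-exclusion property: if $s\gneqq 0$ has a vanishing component and $\Gamma_\mu(s)\le s$, then strong connectivity produces an edge from the support of $s$ to its complement and condition (i) of Definition~\ref{def.maf} forces $\Gamma_{\mu,i}(s)>0=s_i$ for the corresponding $i$, a contradiction. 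Hence $\{s\ne 0:\Gamma_\mu(s)\le s\}\subset\operatorname{int}\R_+^N$, so a curve that is interior for $r>0$ faces no obstruction near the coordinate hyperplanes.

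Next I would upgrade the hypothesis $\Gamma_\mu\not\ge\mathrm{id}$ to a robust (strong) small-gain condition: there is a diagonal map $D=\operatorname{diag}(\mathrm{id}+\rho)$ with $\rho\in\mathcal{K}_\infty$ such that $D\circ\Gamma_\mu\not\ge\mathrm{id}$ still holds, proved by contradiction by normalizing a hypothetical violating sequence associated with perturbations shrinking to zero and using irreducibility and unboundedness to exclude escape of that sequence to the coordinate hyperplanes or to infinity. The perturbation buys two things. First, wherever $D\Gamma_\mu(s)\le s$ one in fact has $\Gamma_\mu(s)<s$ componentwise, since $\Gamma_\mu(s)\le D\Gamma_\mu(s)\le s$ with the first inequality strict where $\Gamma_{\mu,i}(s)>0$ (because $D_i>\mathrm{id}$) and strict anyway where $\Gamma_{\mu,i}(s)=0<s_i$ (because $s\in\operatorname{int}\R_+^N$ by boundary exclusion); thus the strict-decay set $\{s:\Gamma_\mu(s)<s\}$ contains the whole sub-equilibrium set of $D\circ\Gamma_\mu$. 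Second, from any $v$ with $D\Gamma_\mu(v)\le v$ the iterates $(D\circ\Gamma_\mu)^n(v)$ decrease monotonically and, by continuity and $D\circ\Gamma_\mu\not\ge\mathrm{id}$, their only admissible limit is $0$; so they sweep the strict-decay set down to the origin.

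Then I would assemble the curve. Choose a bi-infinite chain $\dots\ll p^{(-1)}\ll p^{(0)}\ll p^{(1)}\ll\dots$ of interior points with $p^{(k)}\to 0$ as $k\to-\infty$ and $p^{(k)}\to\infty$ as $k\to+\infty$, arranged so that $\Gamma_\mu(p^{(k+1)})\ll p^{(k)}$ for every $k$. By monotonicity, every $q$ in the order box $[p^{(k)},p^{(k+1)}]$ then satisfies $\Gamma_\mu(q)\le\Gamma_\mu(p^{(k+1)})\ll p^{(k)}\le q$, so the entire box lies in the open strict-decay set. Joining consecutive $p^{(k)}$ by the straight segment inside their common box (strictly increasing in each component because $p^{(k+1)}\gg p^{(k)}$), placing the $k$-th node at parameter value $k$, smoothing the joints while keeping all one-sided slopes between fixed positive bounds on each compact parameter range, and reparametrizing so that the parameter runs over $[0,\infty)$ with the origin carried to $c(0)=0$, produces $\sigma=c$ with the desired properties: $p^{(k)}\to 0,\infty$ gives $\sigma\in\mathcal{K}_\infty^N$, the box containment gives (iii), and the controlled piecewise-affine slopes give (i)--(ii) for the $\sigma_i^{-1}$.

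The main obstacle is the existence of such a chain with the box-containment property $\Gamma_\mu(p^{(k+1)})\ll p^{(k)}$: each new point must be placed not merely inside the strict-decay set but so deep inside that the whole order interval back to the previous point remains there, and in particular the chain must be cofinal in $\R_+^N$ — delicate because, with genuinely nonlinear gains, even large multiples of $\mathbf{1}$ need not be decay points. This is exactly where irreducibility and the robust small-gain condition are indispensable (the bare condition $\Gamma_\mu\not\ge\mathrm{id}$ controls only one component at a time): the descent part of the chain is supplied by the monotone iteration of $D\circ\Gamma_\mu$, but the ascent to infinity, and more generally the fact that the strict-decay set is coinitial, cofinal, and can be threaded by a monotone curve, needs the structural argument carried out in \cite{drw}. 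Getting the two ends right so that $\sigma$ is genuinely $\mathcal{K}_\infty^N$ with the prescribed inverse-derivative bounds is the remaining careful bookkeeping.
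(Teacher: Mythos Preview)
The paper does not prove this theorem at all: it is stated with the parenthetical ``(see \cite{drw} for a proof and a more complete statement)'' and no argument is given. So there is nothing in the paper to compare your proposal against; your plan to follow \cite{drw} is exactly what the paper itself does, by citation.

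As to the proposal on its own terms: the outline is faithful to the construction in \cite{drw} --- monotonicity and the boundary-exclusion argument from irreducibility are correct as stated, the robust upgrade $D\circ\Gamma_\mu\not\ge\mathrm{id}$ is the right intermediate step, and the monotone iteration $(D\circ\Gamma_\mu)^n(v)\searrow 0$ together with the box argument $\Gamma_\mu(p^{(k+1)})\ll p^{(k)}\Rightarrow\Gamma_\mu([p^{(k)},p^{(k+1)}])\ll[p^{(k)},p^{(k+1)}]$ is the mechanism that makes the curve work. You are candid that the genuine content --- the existence of a cofinal ascending chain in the strict-decay set, and the robust upgrade itself --- is deferred back to \cite{drw}. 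That is accurate, but it means your proposal is an annotated table of contents for the cited proof rather than an independent argument; if the intent was to supply what the paper omits, those two steps are precisely the ones that would need to be written out.
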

\begin{remark}
In fact, the irreducibility condition on $\Gamma$ is a purely technical assumption. A way how to relax it can be found in \cite{drw}.
\end{remark}
The small gain condition stated above is reformulated
in the following assumption
to take into account the case in which the error inputs
are present in the system:
\begin{assumption}\label{a2}
 There exist an $\Omega$-path $\sigma$ with respect to $\Gamma_\mu$ and a map $\varphi\in ({\cal K}_\infty
\cup \{0\})^{N\times N}$ such
that:
\be\label{omega.condition}
\overline{\Gamma}_\mu(\sigma(r), \varphi(r))<\sigma(r)\;,\quad
\forall r>0\;,
\ee
where $\overline{\Gamma}_\mu(\sigma(r),
\varphi(r))$ is defined by
%
\begin{multline*}
\overline{\Gamma}_\mu(\sigma(r),
\varphi(r)):= \left(\ba{c}
\mu_1(\gamma_{11}(\sigma_1(r)),..,\gamma_{1n}(\sigma_N(r)), \varphi_{11}(r),.., \varphi_{1N}(r))\\
\vdots  \\
\mu_N(\gamma_{N1}(\sigma_1(r)),..,\gamma_{NN}(\sigma_N(r)), \varphi_{N1}(r),.., \varphi_{NN}(r))\\
\ea\right).
\end{multline*}
\end{assumption}

\begin{remark}
We remark that in the case $\mu_i=\max$ for each $i\in\mathcal{N}$, one can exploit the degree of freedom given by $\varphi$ in
such a way that the condition (\ref{omega.condition}) boils down
to the small-gain condition $\Gamma_\mu(r)\ngeq r$. In fact, once an
$\Omega$-path has been determined, if the small-gain
condition is true then it suffices to choose $\varphi_{ij}$ such
that, for any $i,j\in \mathcal{N}$, $\varphi_{ij}\le
\gamma_{ik}\circ \sigma_k$ for some $k \in \mathcal{N}$. For
a more general discussion on the fulfillment of
\eqref{omega.condition} as a consequence of the small-gain
condition, we refer the interested reader to \cite{drw},
Corollaries 5.5-5.7.
\end{remark}
\begin{remark}
 \label{rem:eta}
Observe that $\gamma_{ij}$ describes the influence of
system $j$ on the
dynamics of system $i$ either directly or through its controller $g_i$. Hence for $i\neq j$ the gains $\gamma_{ij}\neq 0$ if and only if
$j\in\Sigma(i)$ or $j\in C(i)$.
Analogously, $\eta_{ij}\neq 0$ if and only if $j\in C(i)$,
meaning that the controller $i$ depends explicitely on the state
of system $j$. Because the $\varphi_{ij}$ from Assumption~\ref{a2} describe the gains for the error input, there is no loss in generality if we set conventionally $\varphi_{ij}=0$ if $\eta_{ij}=0$. From the definition of $C$ and $Z$ it is evident
that $j\in C(i)$ is equivalent to $i\in Z(j)$.
\end{remark}

\section{Main results}\label{s.detc}

In our first result it is shown that a Lyapunov function $V$  and
a set of decentralized conditions exist which guarantee that $V$
decreases along the trajectories of the system:

\begin{theorem}\label{t1}
Let Assumptions \ref{a1} and \ref{a2} hold. Let $V(x)=\max_{i\in
{\cal N}} \sigma_i^{-1}(V_i(x_i))$ and, for each $j\in {\cal N}$,
define:
\begin{equation}
\label{eq:threestar}
\chi_j=\sigma_j\circ \hat\gammaeo_j\;,\;
\mbox{with}\;
\hat\gammaeo_j=\max_{i\in {Z(j)}}
\varphi_{ij}^{-1}\circ\gammaeo_{ij}\;.
\end{equation}

Then there exist a positive definite
$\alpha\,:\,\R_+\rightarrow\R_+$ such that the condition
\be\label{str}
V_i(x_i)\ge \chi_i(||e_i||),\quad\forall\, i\in {\cal N}
\ee
implies
\[
\langle p,f(x,g(x+e))\rangle\le -\alpha(||x||),
\; \forall p \in \partial V(x)\;,
\]
where $\partial V$ denotes the Clarke generalized
gradient\footnote{We recall that by Rademacher's theorem the
gradient $\nabla V$ of a locally Lipschitz function $V$ exists
almost everywhere. Let $N$ be the set of measure zero where
$\nabla V$ does not exist and let $S$ be any measure zero subset
of the state space where $V$ lives. Then $\partial V(x)={\rm
co}\{\lim_{i\to +\infty}\nabla V(x_i):\, x_i\to x,\; x_i\not\in
N\; x_i\not \in S\}$. } and 

\[
f(x,g(x+e)) = \left(\ba{c}
f_1(x, g_1(x+e))\\
\ldots\\
f_n(x, g_n(x+e))\\
\ea\right)\;.
\]

\end{theorem}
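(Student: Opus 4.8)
The plan is to verify that the composite function $V(x)=\max_{i\in\mathcal{N}}\sigma_i^{-1}(V_i(x_i))$ serves as an ISS-Lyapunov function for the interconnected system whenever the decentralized error conditions \eqref{str} hold, by arguing index-by-index on the set where the maximum is attained. First I would fix a point $x$ with $x\neq 0$ and set $r:=V(x)>0$, and let $I(x)=\{i\in\mathcal{N}\,:\,\sigma_i^{-1}(V_i(x_i))=r\}$ be the (nonempty) set of active indices; for $i\in I(x)$ we have $V_i(x_i)=\sigma_i(r)$, while for every $j\in\mathcal{N}$ we have $V_j(x_j)\le\sigma_j(r)$. By the Clarke calculus for pointwise maxima of $C^1$ functions, any $p\in\partial V(x)$ is a convex combination $p=\sum_{i\in I(x)}\lambda_i\,(\sigma_i^{-1})'(V_i(x_i))\,\nabla V_i(x_i)$ composed appropriately with the block structure (the $j$-th block of $p$ vanishes unless $j\in I(x)$), with $\lambda_i\ge 0$, $\sum\lambda_i=1$; so $\langle p,f(x,g(x+e))\rangle=\sum_{i\in I(x)}\lambda_i\,(\sigma_i^{-1})'(V_i(x_i))\,\nabla V_i(x_i)f_i(x,g_i(x+e))$. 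Since $\sigma_i^{-1}$ is locally Lipschitz and, on the compact set $\{r\}$, condition (ii) of the $\Omega$-path definition gives $(\sigma_i^{-1})'\ge c>0$ at points of differentiability, it suffices to produce a uniform negative bound on each $\nabla V_i(x_i)f_i(x,g_i(x+e))$ for $i\in I(x)$.

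The core step is to check, for each active index $i$, that the hypothesis of the implication in Assumption~\ref{a1}, namely
\[
V_i(x_i)\ge\mu_i\big(\gamma_{i1}(V_1(x_1)),\ldots,\gamma_{iN}(V_N(x_N)),\eta_{i1}(\|e_1\|),\ldots,\eta_{iN}(\|e_N\|)\big),
\]
is satisfied. For the state-gain arguments, monotonicity of $\mu_i$ and of the $\gamma_{ij}$ together with $V_j(x_j)\le\sigma_j(r)$ reduce the right-hand side to something dominated by $\mu_i(\gamma_{i1}(\sigma_1(r)),\ldots,\gamma_{iN}(\sigma_N(r)),\eta_{i1}(\|e_1\|),\ldots)$; for the error-gain arguments, I need $\eta_{ij}(\|e_j\|)\le\varphi_{ij}(r)$ for all $j$. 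This is exactly where the definition of $\chi_j$ in \eqref{eq:threestar} is used: condition \eqref{str} says $V_j(x_j)\ge\chi_j(\|e_j\|)=\sigma_j(\hat\eta_j(\|e_j\|))$ with $\hat\eta_j=\max_{i\in Z(j)}\varphi_{ij}^{-1}\circ\eta_{ij}$, and since $i\in Z(j)$ precisely when $\eta_{ij}\neq 0$ (Remark~\ref{rem:eta}), inverting gives $\sigma_j^{-1}(V_j(x_j))\ge\varphi_{ij}^{-1}(\eta_{ij}(\|e_j\|))$ for every relevant pair, i.e.\ $\eta_{ij}(\|e_j\|)\le\varphi_{ij}(\sigma_j^{-1}(V_j(x_j)))\le\varphi_{ij}(r)$, using $V_j(x_j)\le\sigma_j(r)$ and monotonicity. (When $\eta_{ij}=0$ the corresponding term is zero and the inequality is vacuous, consistent with the convention $\varphi_{ij}=0$.) Feeding these bounds in, $\mu_i$-monotonicity yields that the right-hand side is at most $\overline{\Gamma}_{\mu,i}(\sigma(r),\varphi(r))$, which by Assumption~\ref{a2}, equation \eqref{omega.condition}, is strictly less than $\sigma_i(r)=V_i(x_i)$. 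Hence the implication in Assumption~\ref{a1} fires and $\nabla V_i(x_i)f_i(x,g_i(x+e))\le-\alpha_i(\|x_i\|)$.

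Finally I would assemble the uniform bound. For $i\in I(x)$ we have $V_i(x_i)=\sigma_i(r)$, so $\|x_i\|\ge\alpha_{i2}^{-1}(\sigma_i(r))$, and since $\alpha_i$ is positive definite, $\alpha_i(\|x_i\|)\ge\tilde\alpha_i(r)$ for a suitable positive definite $\tilde\alpha_i$; also $\|x\|\le\alpha_1^{-1}\circ\ldots$—more directly, $r=V(x)\ge\sigma_i^{-1}(\alpha_{i1}(\|x_i\|))$ for all $i$, hence $r$ is bounded below by a $\mathcal{K}_\infty$ function of $\|x\|$ and above by another, so a positive definite bound in $r$ converts to a positive definite bound $\alpha(\|x\|)$. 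Combining with $(\sigma_i^{-1})'\ge c$ on $\{r\}$ (taking care that the constant $c$ depends on $r$, hence ultimately on $\|x\|$, which is harmless for a positive-definite conclusion) gives $\langle p,f(x,g(x+e))\rangle\le-\sum_{i\in I(x)}\lambda_i\,c\,\tilde\alpha_i(r)\le-\alpha(\|x\|)$ with $\alpha:=c\cdot\min_i\tilde\alpha_i$ re-expressed through $\|x\|$. The main obstacle is bookkeeping rather than conceptual: one must handle the Clarke gradient of the max carefully (only active blocks contribute, and the chain rule through $\sigma_i^{-1}$ is valid a.e. along trajectories by Rademacher, as flagged in the footnote), and one must make the constants $c,C$ from the $\Omega$-path—which are only locally uniform—interact correctly with the global positive-definite estimate, ensuring the final $\alpha$ is genuinely positive definite on all of $\R_+$ and not merely locally.
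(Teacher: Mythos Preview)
Your proposal is correct and follows essentially the same approach as the paper's proof: identify the active index set $I(x)$, use Assumption~\ref{a2} together with the error condition \eqref{str} and the definition of $\chi_j$ to verify the hypothesis of Assumption~\ref{a1} for each active index, and then handle the Clarke subdifferential of the max via the $\Omega$-path bounds on $(\sigma_i^{-1})'$. The paper's final assembly is slightly more direct---it sets $\tilde\alpha_i(\rho)=c_\rho\alpha_i(\rho)$ with $\rho=\|x_i\|$ and defines $\alpha(\|x\|)$ as a minimum over active indices, rather than routing through $r=V(x)$---but the substance and structure are the same.
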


\medskip

\begin{proof}For each $x$, let ${\cal N}(x)\subseteq {\cal N}$ be the set of indices $i$ for which
$V(x)=\sigma_i^{-1}(V_i(x_i))$. Let $i\in {\cal N}(x)$ and set
$r=V(x)$. Then
\be\label{c1}\ba{rl}
V_i(x_i)=&\sigma_i(r)>
\overline{\Gamma}_{\mu,i}(\sigma(r),
\varphi(r))\\
 =&\mu_i(\gamma_{i1}(\sigma_1(r)),..,\gamma_{iN}(\sigma_N(r)),
\varphi_{i1}(r),.., \varphi_{iN}(r)). \ea\ee
Observe that by definition of $V(x)$, for any $i\in {\cal N}(x)$ and
any $j\in {\cal N}$,
\begin{equation}
\label{gammaij}
\gamma_{ij}(\sigma_j(r))=\gamma_{ij}(\sigma_j(V(x))) \ge
\gamma_{ij}(\sigma_j(\sigma_j^{-1}(V_j(x_j)))=\gamma_{ij}(V_j(x_j))\;.
\end{equation}
Note that  for $j\notin C(i)$ we have $\varphi_{ij}=0$ and
$\eta_{ij}=0$. Hence for $j\notin C(i)$ it holds trivially that
\begin{equation}
 \label{eq:triv}
\varphi_{ij}(r)\geq \eta_{ij}(\|e_j\|).
\end{equation}
 This is also true if $j\in C(i)$ (or equivalently $i\in Z(j)$).
In fact, since for any $j\in {\cal N}$,
\[
V_j(x_j)\ge \chi_j(||e_j||)\;,\quad \chi_j=\sigma_j\circ
\hat \gammaeo_{j}
\]
we have, using the definition of $V$,  \eqref{str} and
\eqref{eq:threestar}, that
\begin{multline}\label{varphi}
\varphi_{ij}(r)=\varphi_{ij}(V(x))\ge
\varphi_{ij}(\sigma_j^{-1}(V_j(x_j))) 
\ge \varphi_{ij}(\sigma_j^{-1}\circ
\sigma_j(\hat \gammaeo_{j}(||e_j||))) \\
\ge \varphi_{ij}(\sigma_j^{-1}\circ
\sigma_j(
\varphi_{ij}^{-1}\circ\gammae(||e_j||)))
= \gammae(||e_j||)\;.
\end{multline}
Observe that $\mu_i(v)\ge \mu_i(z)$ for all $v\ge z\in \R_+^{2N}$ since $\mu_i\in MAF_{2N}$
and as a consequence of Definition \ref{def.maf}, (ii). Since
$r=V(x)\ge \sigma_i^{-1}(V_i(x_i))$ for all $i\in {\cal N}$,
by (\ref{gammaij}), \eqref{eq:triv} and (\ref{varphi}),
\begin{multline}
\mu_i(\gamma_{i1}(\sigma_1(r)),\ldots,\gamma_{iN}(\sigma_N(r)),
\varphi_{i1}(r),\ldots, \varphi_{iN}(r)) \ge\\
\mu_i(\gamma_{i1}(V_1(x_1)),\ldots,\gamma_{iN}(V_N(x_N)),
\gammaeo_{i1}(||e_1||), \ldots,\gammaeo_{iN}(||e_N||)
)\;.
\end{multline}
The inequality above and (\ref{c1}) yield that for each $i\in
{\cal N}(x)$
\begin{multline}
V_i(x_i)> \mu_i(\gamma_{i1}(\sigma_1(r)),..,\gamma_{iN}(\sigma_N(r)), \varphi_{i1}(r),.., \varphi_{iN}(r))\\
 \ge  \mu_i(\gamma_{i1}(V_1(x_1)),\ldots,\gamma_{iN}(V_N(x_N)),
\gammaeo_{i1}(||e_1||), \ldots,\gammaeo_{iN}(||e_N||))\;.
\end{multline}
Hence, by (\ref{aa1}),
\[
\nabla V_i(x_i) f_i(x, g_i(x+e))\le - \alpha_i(||x_i||)
\]
for all $i\in {\cal N}(x)$.\\
We now provide a bound to $\langle p, f_i(x, g_i(x+e))\rangle $
for each $p\in \partial \sigma_i^{-1}(V_i(x_i))$ and $i\in {\cal N}(x)$. Observe that
$\sigma^{-1}$ is only locally Lipschitz and the Clarke generalized
gradient must be used for $\sigma_i^{-1}(V_i(x_i))$.  Fix $x_i$
and let $\rho>0$ be such that $||x_i||=\rho$. Define the compact set
$K_\rho=\{V_i(x_i)\in \R_+:\rho/2\le ||x_i||\le 2\rho\}$, and
let
\[
c_\rho=\min_{r\in K_\rho} (\sigma_i^{-1})'(r)\;,\quad C_\rho=\max_{r\in
K_\rho} (\sigma_i^{-1})'(r)\;,
\]
where $c_\rho>0$  by definition of the $\Omega$-path $\sigma$. Bearing
in mind that $||x_i||=\rho$, for each $p\in
\partial \sigma_i^{-1}(V_i(x_i))$  there exists $\gamma_\rho\in
[c_\rho, C_\rho]$ such that $p=\gamma_\rho\nabla V_i(x_i)$, and
$\langle p, f_i(x, g_i(x+e))\rangle =\gamma_\rho \nabla V_i(x_i)\cdot f_i(x,
g_i(x+e))\le - \gamma_\rho
\alpha_i(\rho)\le -c_\rho \alpha_i(\rho)$.\\
Set $\tilde \alpha_i(\rho):= c_\rho
\alpha_i(\rho)$, which is a positive function for all positive
$\rho$. Also set
\[
\alpha(r):=\min\{\tilde \alpha_i(||x_i||)\,:\,
r=||x||\;,\; i\in {\cal N}(x) \}\;.
\]
Then, for each $p\in \partial \sigma_i^{-1}(V_i(x_i))$, $\langle p,
f_i(x, g_i(x+e))\rangle \le -\tilde \alpha_i(||x_i||)\le -\alpha(||x||)$.
This in turn implies (\cite{drw}) that for each $p\in \partial V(x)$ $\langle p, f(x,
g(x+e))\rangle \le -\alpha(||x||)$.
\end{proof}
\medskip
In the rest of the section we discuss an event-triggered control
scheme 
for the system \eqref{def:trigsys} with  triggering conditions
that ensure that the condition on the state $x$ and the error $e$ as in Theorem~\ref{t1} are satisfied.

\begin{theorem}
\label{t3a}
Let Assumptions \ref{a1} and \ref{a2} hold. Consider
the interconnected system
\be\label{is} \dot x_i(t)=f_i(x(t), g_i(\hat x(t)))\;,\quad i\in
{\cal N}\;,
\ee
as in \eqref{def:trigsys} with triggering conditions given by
\[T_i(x_i,e_i)=\chi_i(\|e_i\|)-V_i(x_i)\,,\]
with $\chi_i$ defined in \eqref{eq:threestar} for all $i\in\mathcal{N}$.
Assume that no Zeno behavior is induced, i.e. the sequence of times $t_k$,
where the $t_k$'s are defined by the triggering conditions $T_i$ as discussed in Section~\ref{sec:trig}, has no accumulation point or is a finite
sequence for all $i\in\mathcal{N}$.
Then the
origin is a globally uniformly asymptotically stable equilibrium
for (\ref{is}).
\end{theorem}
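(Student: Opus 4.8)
The plan is to use the locally Lipschitz function $V(x)=\max_{i\in\mathcal{N}}\sigma_i^{-1}(V_i(x_i))$ supplied by Theorem~\ref{t1} as a (nonsmooth) Lyapunov function for the closed loop \eqref{is}. The easy preliminary step is to note that $V$ is positive definite and radially unbounded: composing the bounds $\alpha_{i1}(\|x_i\|)\le V_i(x_i)\le\alpha_{i2}(\|x_i\|)$ of Assumption~\ref{a1} with the monotone functions $\sigma_i^{-1}\in\K_\infty$ and using the equivalence of $\max_i\|x_i\|$ with $\|x\|$, one obtains $\psi_1,\psi_2\in\K_\infty$ with $\psi_1(\|x\|)\le V(x)\le\psi_2(\|x\|)$ for every $x\in\R^n$.

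The core step is to verify that the triggering rule $T_i(x_i,e_i)=\chi_i(\|e_i\|)-V_i(x_i)$ enforces the hypothesis \eqref{str} of Theorem~\ref{t1} along every solution, i.e.\ $V_i(x_i(t))\ge\chi_i(\|e_i(t)\|)$ (equivalently $T_i(x_i(t),e_i(t))\le0$) for all $i\in\mathcal{N}$ and all $t\ge0$. This is an induction over the inter-event intervals of Section~\ref{sec:trig}. Since $e(0)=0$ and $\chi_i(0)=0$ by \eqref{eq:threestar}, we have $T_i(x_i(0),0)=-V_i(x_i(0))\le0$. On an open interval $(t_{k-1},t_k)$, the definition of $t_k$ as an infimum gives $T_i<0$ for all $i$; at $t_k$, joint continuity of $T_i$ forces $T_i(x_i(t_k),e_i(t_k))\le0$, and every index $j$ with $T_j(x_j(t_k),e_j(t_k))=0$ broadcasts and thus has $e_j(t_k^+)=0$, restoring $T_j(x_j(t_k^+),0)=-V_j(x_j(t_k^+))\le0$ (the degenerate case $x_j(t_k)=0$ being covered by the convention in Remark~\ref{rem:trigg}). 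Hence \eqref{str} holds for all $t\ge0$. On each inter-event interval \eqref{is} coincides with $\dot x=f(x,g(x+e))$ for the zero-order-hold error $e=\hat x-x$, so Theorem~\ref{t1} applies pointwise and provides a positive definite $\alpha$ with $\langle p,\dot x(t)\rangle\le-\alpha(\|x(t)\|)$ for every $p\in\partial V(x(t))$.

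It remains to deduce global uniform asymptotic stability. On each inter-event interval the right-hand side of \eqref{is} is continuous and $\hat x$ is frozen, so $x(\cdot)$ is $C^1$ there; by the chain rule for locally Lipschitz functions (as used in the proof of Theorem~\ref{t1}) the scalar map $t\mapsto V(x(t))$ is absolutely continuous with $\frac{d}{dt}V(x(t))\le-\alpha(\|x(t)\|)\le0$ for a.e.\ $t$, hence nonincreasing on the interval. Since $V$ is a function of $x$ alone and $x(\cdot)$ is continuous across the event times (only $\hat x$, hence $e$, jumps), $V(x(\cdot))$ is continuous and therefore nonincreasing on the whole maximal interval of existence. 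Then $\psi_1(\|x(t)\|)\le V(x(t))\le V(x_0)\le\psi_2(\|x_0\|)$, which yields uniform stability of the origin and, via boundedness of $x(\cdot)$ together with the no-Zeno assumption (only finitely many events on any bounded interval), completeness of all solutions. For attractivity, $V(x(\cdot))$ is nonincreasing and bounded below, hence converges to some $c\ge0$, and integration of the decrease estimate gives $\int_0^\infty\alpha(\|x(s)\|)\,ds\le V(x_0)<\infty$; if $c>0$, the bounds above would confine $x(\cdot)$ to the compact annulus $\{\psi_2^{-1}(c)\le\|x\|\le\psi_1^{-1}(V(x_0))\}$, on which $\alpha$ is bounded below by a positive constant, contradicting finiteness of the integral. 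Thus $c=0$, $\|x(t)\|\to0$, and a standard comparison argument (using that $V$ is proper and positive definite with the uniform decrease rate) upgrades this to a $\mathcal{KL}$ estimate, i.e.\ to global uniform asymptotic stability.

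I expect the main obstacles to be two technical points rather than the overall scheme. The first is the bookkeeping at the event times in the invariant argument: one must argue carefully, from the infimum definition of the $t_k$ and from continuity, that no $T_i$ becomes positive before an event is declared, and that the broadcast genuinely re-establishes $T_j\le0$ for the resetting indices — including the degenerate case in which a subsystem passes through its origin, which is precisely what the conventions in Remark~\ref{rem:trigg} and the equilibrium assumption handle. The second is the regularity mismatch: $V$ is only locally Lipschitz (so one works with $\partial V$ and an a.e.\ chain rule, and must check that $V$ never jumps upward at an event, which holds because $V$ depends on $x$ only), and the $\alpha$ produced by Theorem~\ref{t1} is a priori merely positive definite, so the step ``bounded below on a compact annulus'' must either pass to a continuous positive-definite minorant of $\alpha$ or exploit the explicit form $\tilde\alpha_i(\rho)=c_\rho\alpha_i(\rho)$ built in the proof of Theorem~\ref{t1} together with the lower bound on $\|x_i\|$ available for the active indices $i\in\mathcal{N}(x)$.
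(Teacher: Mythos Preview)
Your proposal is correct and follows essentially the same route as the paper: verify that the triggering rule enforces \eqref{str} for all $t\ge0$, invoke Theorem~\ref{t1} to obtain the decrease estimate for the nonsmooth Lyapunov function $V$, and conclude global uniform asymptotic stability. The only presentational difference is that the paper casts the closed loop as a single Carath\'eodory time-varying system $\dot x=\tilde f(t,x):=f(x,g(x+e(t)))$ and dispatches the final step by citing a standard Lyapunov theorem (Bacciotti--Rosier, Theorem~3.2), whereas you work piecewise on inter-event intervals and spell out the stability and attractivity arguments by hand; the technical concerns you flag (the Clarke chain rule and the regularity of $\alpha$) are exactly those the paper absorbs into its citations.
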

\begin{proof}
To analyze the event-based control scheme introduced above, we
define the time-varying map $\tilde f(t,x)=f(x,g(x+e(t)))$.
The map $\tilde
f(t,x)$ satisfies the Carath\'eodory conditions
for the
existence of solutions (see e.g.\ \cite{bacciotti.rosier.book},
Section 1.1). Because of the conditions on $f$  (see
Section \ref{s.prelim}), the solution exists and is unique. Along
the solutions of $\dot x=\tilde f(t,x)$, the locally Lipschitz
positive definite and radially unbounded Lyapunov function $V(x)$
introduced in Theorem \ref{t1} satisfies
\[
V(x(t''))-V(x(t'))=\int_{t'}^{t''} \dst\frac{d}{dt} V(x(t)) dt
\]
for each pair of times $t''\ge t'$ belonging to the interval of
existence of the solution. Moreover, by a property of the Clarke
generalized gradient (\cite{tesi.ceragioli}, Section 2.3,
Proposition 4), for almost all $t\in \R_+$, there exists $p
\in \partial V(x(t))$ such that:
\[
\dst\frac{d}{dt} V(x(t))= \langle p,\tilde f(t,x(t)) \rangle\;.
\]
Note that the triggering conditions $T_i(x_i,e_i)=\chi_i(\|e_i\|)-V_i(x_i)\geq0$
ensures that $V_i(x_i)\geq\chi_i(\|e_i\|)$ for all positive times. Hence we can use Theorem~\ref{t1} together with
the definition of $\tilde f(t,x)$,
to infer (see \cite{sanfelice.et.al.tac07}, Section IV.B,
for similar arguments)
\[
V(x(t''))-V(x(t'))\leq-\int_{t'}^{t''} \alpha(||x(t)||) dt\;.
\]
We can now apply \cite{bacciotti.rosier.book}, Theorem 3.2, to
conclude that the origin of $\dot x=\tilde f(t,x)$, and therefore
of $\dot x=f(x,g(\hat x))$,  is uniformly globally
asymptotically stable.
\end{proof}
 The
assumption that no Zeno behavior is induced is quite strong. One
possibility is to cast the event-triggering approach in the
framework of hybrid systems and study the asymptotic stability of
the system in the presence of Zeno behavior (see
\cite{RG-RS-AT:09}, pp. 72--73 for a discussion in that respect).
Another possibility is to extend the solution (\cite{ames.et.al}).
Let $t^\ast$ be the accumulation time such that
$\lim_{k\to+\infty} t_k=t^\ast$. Since the Lyapunov function is
decreasing along the solution over the interval of time $[0,
t^\ast)$, then $\lim_{k\to+\infty} V(x(t_k))$ exists and is
finite. Let us denote this limit value as $V^\ast$. If $V^\ast\ne
0$, then one can pick a state $x^\ast$ such that
$V(x^\ast)=V^\ast$ and consider the solution to the system
(\ref{is}) with initial condition $x^\ast$. If Zeno behavior
appears again,  one can repeat indefinitely the same argument  and
conclude  that $V(x(t))$ converges to zero either in finite or in
infinite time, with $x(t)$ obtained by the repeated extension of
the solution after the Zeno times. However, this approach may
raise a few implementation issues, such as the detection of the
Zeno time and the choice of the new initial condition $x^\ast$ at
the Zeno time, and may discourage to follow this path. For this
reason, slightly different triggering conditions which rule out
the possibility of Zeno behavior are introduced in
Section~\ref{sec.towards}.

\section{An example}\label{sect.linear}

Consider the interconnection of linear systems as in Section~\ref{sec:ex}
\begin{equation}
    \label{eq:sysex}
 \dot x_i=\sum_{j=1}^N\bar A_{ij}x_j+\sum_{j=1}^N \bar B_{ij}e_j\quad i\in\mathcal{N}\;,
\end{equation}
with $\bar A_{ii}$ Hurwitz for $i\in\mathcal{N}$. In order to
apply our event-triggered sampling scheme, we first have to check
the conditions of Theorem~\ref{t1}. As
verified in Section~\ref{sec:ex}, Assumption~\ref{a1} holds for
system \eqref{eq:sysex} with each Lyapunov function given by
$V_i(x_i)=x_i^\top P_ix_i$.\\
To check Assumption~\ref{a2} we recall Lemma~7.2 from \cite{drw}:
\begin{lemma}
 \label{lem:7.2}
 Let $\alpha\in\K_\infty$ satisfy $\alpha(ab)=\alpha(a)\alpha(b)$ for all $a,b\geq 0$. Let $D=\operatorname{diag}(\alpha)$,
 $G\in\R^{n\times n}$, and $\Gamma_\mu$ be given by
 \[\Gamma_\mu(s)=D^{-1}(GD(s))\,.\]
 Then $\Gamma_\mu\ngeq\operatorname{id}$ if and only if the spectral radius of $G$ is less than one.
\end{lemma}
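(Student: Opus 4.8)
The plan is to transport the nonlinear condition $\Gamma_\mu\ngeq\operatorname{id}$ across the coordinatewise change of variables $D$ into the purely linear condition ``$Gt\ngeq t$ for every $t\in\R_+^n\setminus\{0\}$'', and then to recognize the latter as a Perron--Frobenius characterization of $\rho(G)<1$. The ingredients I would use are: $D=\operatorname{diag}(\alpha)$ acts in each coordinate as $\alpha$, which being class-$\K_\infty$ is a strictly increasing homeomorphism of $\R_+$ fixing $0$, so $D^{-1}=\operatorname{diag}(\alpha^{-1})$ is again strictly increasing in each coordinate; and $G$ is necessarily entrywise nonnegative, since otherwise $GD(s)$ would leave $\R_+^n$ and $D^{-1}$ could not be applied, i.e.\ $\Gamma_\mu$ would not be a well-defined self-map of $\R_+^n$. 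I note in passing that the multiplicativity hypothesis $\alpha(ab)=\alpha(a)\alpha(b)$ together with $\alpha\in\K_\infty$ forces $\alpha(r)=r^p$ for some $p>0$ (solve Cauchy's equation for $\beta=\ln\circ\alpha\circ\exp$), so that $D$ is a coordinatewise $p$-th power; this is a pleasant fact, but the argument below only uses that $\alpha$ is a $\K_\infty$ bijection.

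First I would fix $s\in\R_+^n$, put $t:=D(s)=(\alpha(s_1),\dots,\alpha(s_n))^\top$, and observe that $s\mapsto t$ is a bijection of $\R_+^n$ mapping $\R_+^n\setminus\{0\}$ onto itself. Since $(\Gamma_\mu(s))_i=\alpha^{-1}((GD(s))_i)$, applying the strictly increasing $\alpha$ to each inequality $(\Gamma_\mu(s))_i\ge s_i$ gives the equivalence
\[
\Gamma_\mu(s)\ge s\iff GD(s)\ge D(s)\iff Gt\ge t .
\]
Hence $\Gamma_\mu\ngeq\operatorname{id}$ --- i.e.\ there is no $s\ne 0$ with $\Gamma_\mu(s)\ge s$ --- holds if and only if there is no $t\in\R_+^n\setminus\{0\}$ with $Gt\ge t$.

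It then remains to establish the elementary fact that, for a nonnegative matrix $G$, the nonexistence of $t\ge 0$, $t\ne 0$, with $Gt\ge t$ is equivalent to $\rho(G)<1$. For one direction I would argue by contraposition: if $\rho(G)\ge 1$, the Perron--Frobenius theorem for nonnegative matrices provides an eigenvector $v\ge 0$, $v\ne 0$, with $Gv=\rho(G)v\ge v$, so such a $t$ exists. For the converse, if $Gt\ge t$ for some $t\ge 0$, $t\ne 0$, then iterating and using monotonicity of $v\mapsto Gv$ on $\R_+^n$ (this is where $G\ge 0$ enters) yields $G^k t\ge t$ for all $k$, whence $\|G^k t\|_\infty\ge\|t\|_\infty>0$; but if $\rho(G)<1$ then $G^k\to 0$ and so $G^k t\to 0$, a contradiction. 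Assembling the two reductions proves the lemma.

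The only delicate point --- the ``hard part'', modest as it is --- is the reducible case in the Perron--Frobenius step, where one cannot take the dominant eigenvector strictly positive. This is covered by the general form of the Perron--Frobenius theorem (the spectral radius of any nonnegative matrix is an eigenvalue possessing a nonnegative eigenvector), or, for a self-contained treatment, by perturbing to $G+\eps\,\1\1^\top$, which is strictly positive and hence has a strictly positive Perron eigenvector, and letting $\eps\searrow 0$ while extracting a convergent subsequence of the normalized eigenvectors.
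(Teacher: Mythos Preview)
Your argument is correct. The reduction via $t=D(s)$ is exactly right: since $\alpha$ is a strictly increasing bijection of $\R_+$, the componentwise equivalence $\Gamma_\mu(s)\ge s\iff Gt\ge t$ holds, and the Perron--Frobenius characterization of $\rho(G)<1$ for nonnegative $G$ then finishes the job. Your remark that $G$ must be entrywise nonnegative for $\Gamma_\mu$ to be well defined on $\R_+^n$ is a useful clarification the paper leaves implicit, and your observation that multiplicativity of $\alpha$ is nowhere used is correct and mildly strengthens the statement.

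As for comparison with the paper: there is nothing to compare. The paper does not prove this lemma; it is quoted verbatim as Lemma~7.2 of \cite{drw} and invoked without argument. Your proof is essentially the standard one (and presumably the one in \cite{drw}): conjugate the nonlinear map to the linear one through $D$, then apply Perron--Frobenius.
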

It is easy to see that for the linear case $\Gamma_\mu$ from Section~\ref{sec:lyap} with entries from \eqref{eq:gain_lin} is of the form of Lemma~\ref{lem:7.2} with $\alpha(r):=\sqrt{r}$ and
\[G_{ij}=\frac{2||P_i||^{3/2}}{\tilde c_i}\frac{||\bar A_{ij}||}{[\lambda_{\min}(P_j)]^{1/2}},\quad i\neq j,\;i,j\in\mathcal{N}\]
and zeros as diagonal entries. In other words, $\gamma_{ij}(r)=G_{ij}\alpha(r)$.
Let us assume that the spectral radius of $G$ is less than one.
For the case of linear systems an $\Omega$-path is given by a half line in the direction of an eigenvector $s^*$ of a matrix
$G^*$ which is a perturbed version of $G$ (for details, see the proof of \cite[Lemma~7.12]{drw}). Denote this half line by $\sigma(r):=s^*r$.\\
To show a way to construct a $\varphi$ for which
\begin{equation}
\label{cond:ex1}
\overline{\Gamma}_\mu(\sigma(r), \varphi(r))<\sigma(r)\;,\quad
\forall r>0
\end{equation}
holds, consider the $i$th row of \eqref{cond:ex1} and exploit the fact that the $\Omega$-path is linear:
\[ \left(\sum_{j=1,j\ne i}^N\frac{2||P_i||^{3/2}}{\tilde c_i}\frac{||\bar A_{ij}||}{[\lambda_{\min}(P_j)]^{1/2}}
\sqrt{rs^*_j}+\sum_{j=1}^N\varphi_{ij}(r)\right)^2\hspace*{-7pt}<rs^*_i.
\]
Bearing in mind that $||\bar A_{ij}||\ne 0$ if and only if $j\in
\Sigma(i)$ or $j\in C(i)$ (see last paragraph of Example~\ref{sec:ex}
together with the definition of the set $\Sigma$), the inequality can be
rewritten as
\begin{multline}
    \label{rwr} \left(\sum_{j\in (\Sigma(i)\cup
    C(i))\setminus{i}}\frac{2||P_i||^{3/2}}{\tilde c_i}\frac{||\bar
    A_{ij}||}{[\lambda_{\min}(P_j)]^{1/2}}
  \sqrt{rs^*_j}+\right.
\left.\sum_{j=1}^N\varphi_{ij}(r)\right)^2\hspace{-4pt}<rs^*_i.
\end{multline}

If we make the choice $\varphi_{ij}(r)=a_{ij} \sqrt r$ for all $j\in C(i)$
and $\varphi_{ij}(r)=0$ otherwise, we obtain
\begin{equation}
\sum_{j\in C(i)} a_{ij}
<\sqrt{s^*_i}- \sum_{j\in(\Sigma(i)\cup C(i))\setminus{i}}\frac{2||P_i||^{3/2}}{\tilde c_i}\frac{||\bar A_{ij}||}{[\lambda_{\min}(P_j)]^{1/2}}\sqrt{s^*_j}
=:\rho_i.
\end{equation}

It is worth noting that $ \rho_i>0$ by the spectral condition on $G$.\\
Assume without loss of generality that $(\Sigma(i)\cup C(i))\setminus{i}\neq \emptyset$ (if not,
(\ref{rwr}) trivially holds). Note that it would be sufficient to assume irreducibility of the interconnection structure to ensure $(\Sigma(i)\cup C(i))\setminus{i}\neq \emptyset$.\\
Without further knowledge of the system, we choose for $j\in C(i)$ for the gains
$\varphi_{ij}(r):=\frac{\rho_i}{|C(i)|}\sqrt{r}$, where
$|C(i)|$ denotes the cardinality of the set $C(i)$,
to ensure that \eqref{cond:ex1} holds. If $j\notin C(i)$ set $\varphi_{ij}=0$.
Simulations suggest that it might be better to not choose the $a_{ij}$ uniformly, but to relate them to the system matrices (in particular,
to the spectral radii of the coupling matrices $\bar B_{ij}=B_iK_{ij}$).\\
Now we can calculate the trigger functions $\chi_i$ as in Theorem~\ref{t1} by using the $\Omega$-path and the $\varphi_{ij}$ from above.
The map $\hat\gammaeo_i$ is calculated using the $\gammaeo_{ij}$ from \eqref{eq:gain_lin}.
Stability of the interconnected system is then inferred by Theorem~\ref{t3a}.\\
\begin{figure}[htbp]
 \centering
 \begin{minipage}[t]{6 cm}
     \includegraphics[width=6 cm]{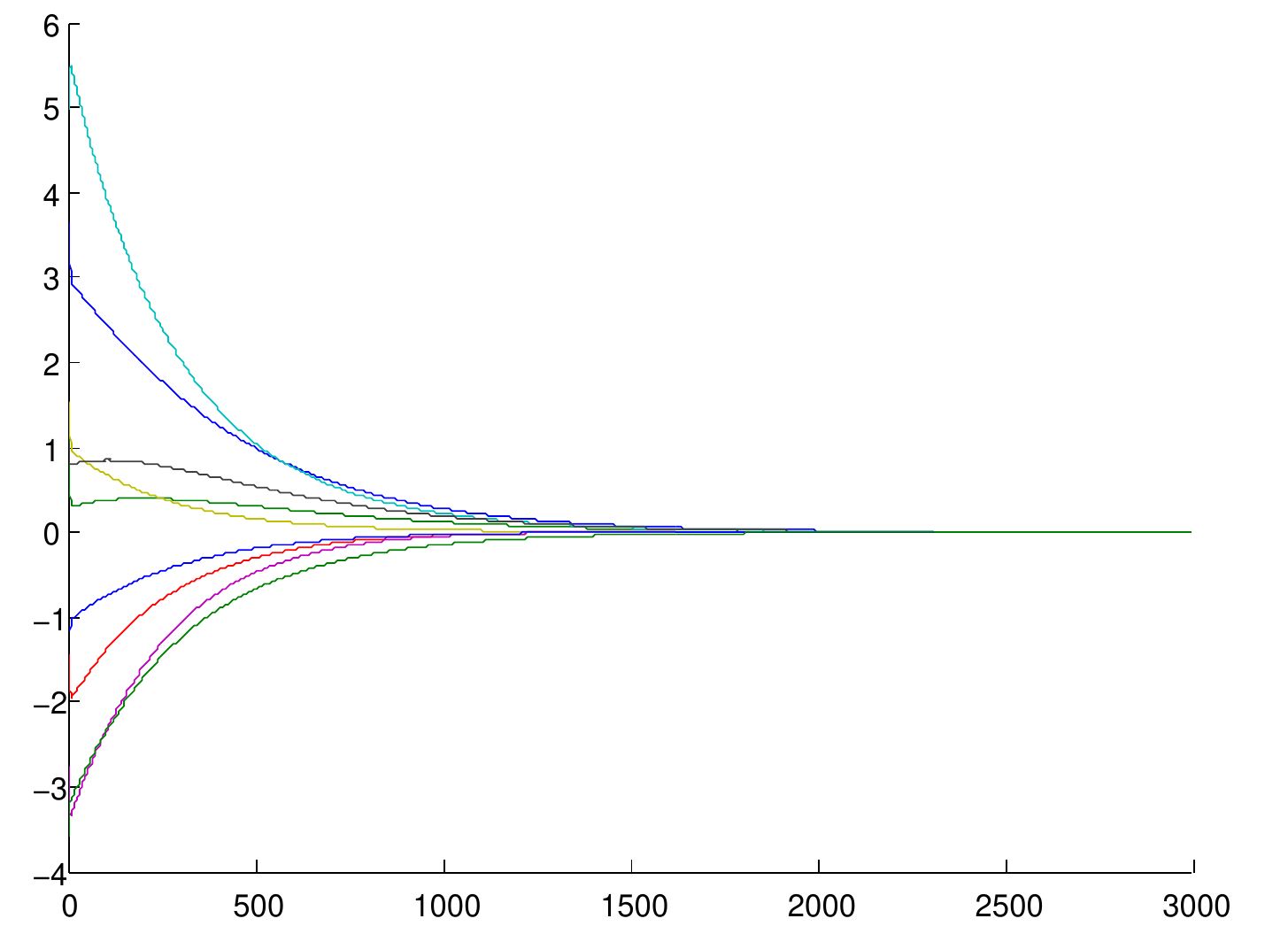}
      \put(-170,60){\tiny{\begin{sideways} $x$ \end{sideways}}}
              \put(-20,10){\tiny{$t$}}
   \caption{Trajectories of the interconnected system with periodic sampling}
   \label{fig:periodic}
 \end{minipage}
 \begin{minipage}[t]{6 cm}
    \includegraphics[width=6 cm]{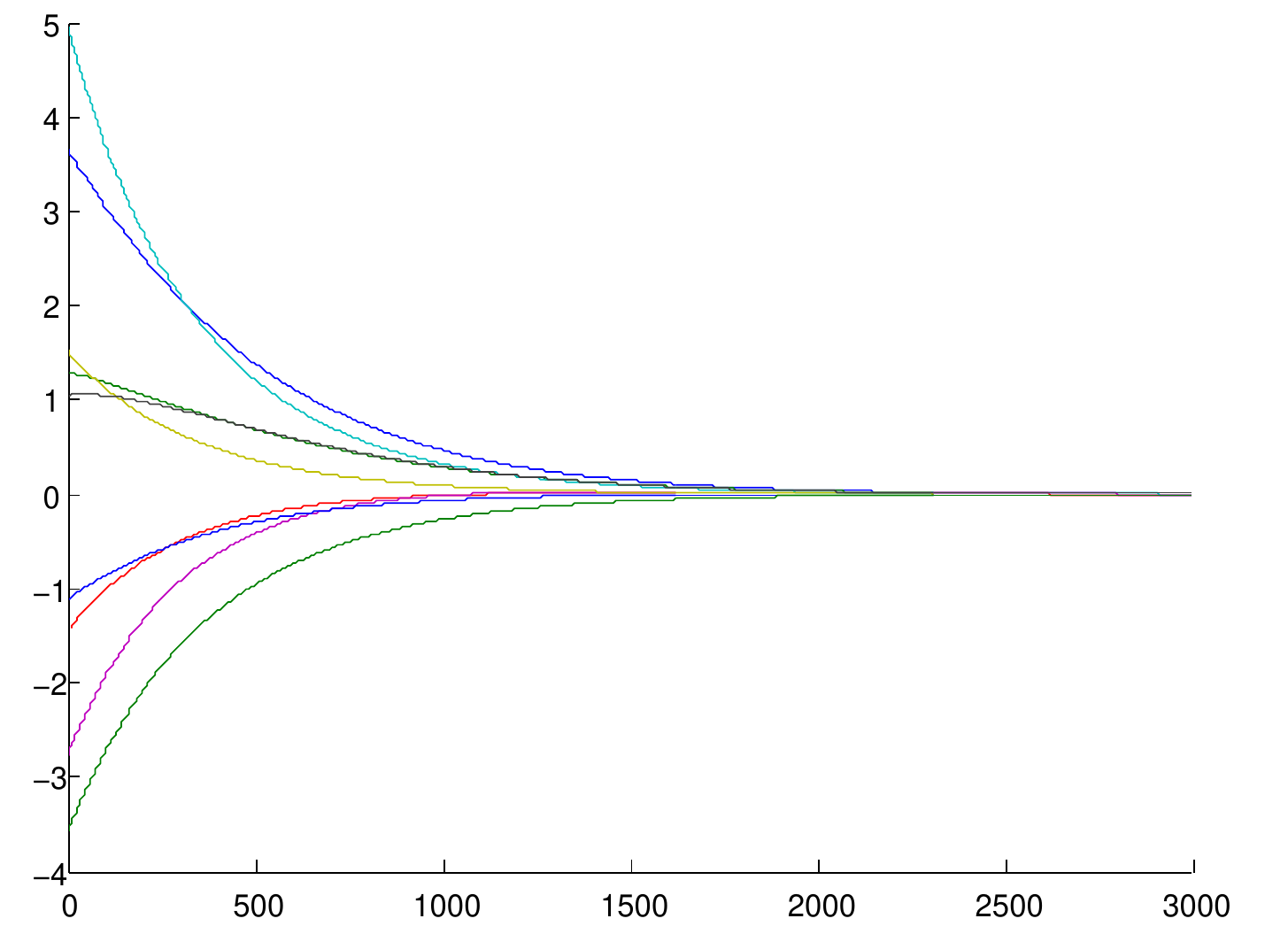}
    \put(-170,60){\tiny{\begin{sideways} $x$ \end{sideways}}}
                  \put(-20,10){\tiny{$t$}}
   \caption{Trajectories of the interconnected system with event-triggering}
   \label{fig:event}
 \end{minipage}
\end{figure}
\begin{figure*}[htb]
\centering
\includegraphics[width=0.8\textwidth]{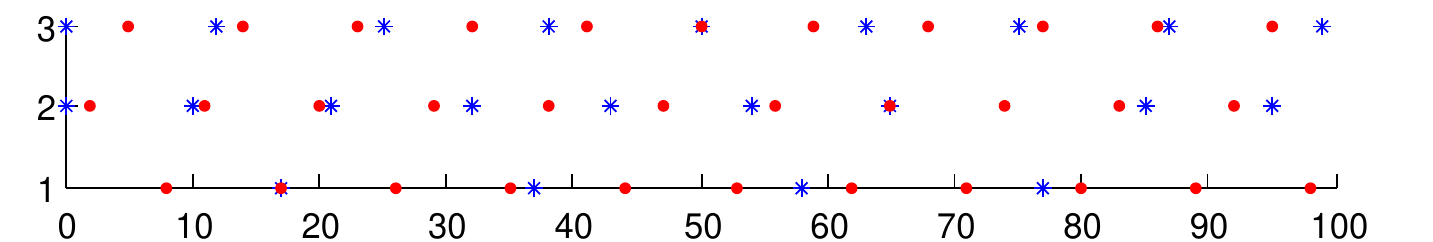}
\put(-275,17){\tiny{\begin{sideways} system \end{sideways}}}
              \put(-33,0){{$t$}}
\caption{\label{fig:begin}33 periodic (red dots) and 22 (blue stars) events at the beginning of the simulation}
\centering
\includegraphics[width=0.8\textwidth]{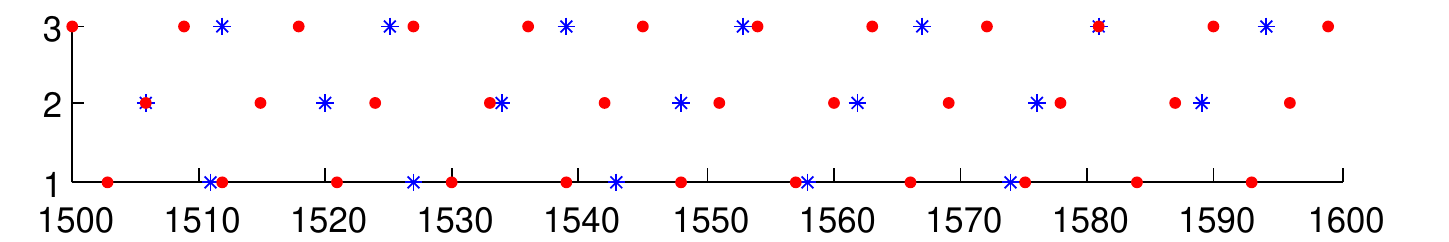}
\put(-275,17){\tiny{\begin{sideways} system \end{sideways}}}
              \put(-33,0){{$t$}}
\caption{\label{fig:middle}34 (red dots) periodic and  19 (blue stars) events in the middle of the simulation}
\end{figure*}
\begin{figure*}[htbp]
\centering
\includegraphics[width=0.8\textwidth]{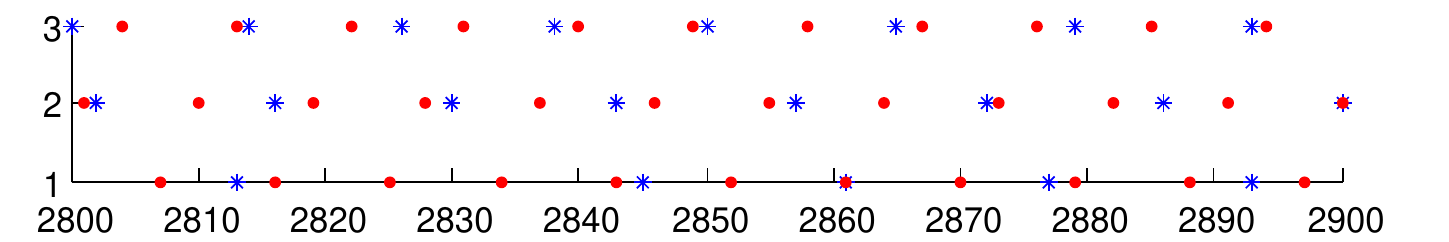}
\put(-275,17){\tiny{\begin{sideways} system \end{sideways}}}
              \put(-33,0){{$t$}}
\caption{\label{fig:end}34 periodic (red dots) and 21 (blue stars) events at the end of the simulation}
\end{figure*}
To illustrate the feasibility of our approach we simulated the
interconnection of three linear systems of dimension three.  The entries
of the system matrices are drawn randomly from a uniform distribution on
the open interval $(-5,5)$.
We repeat this procedure until the spectral radius of the corresponding matrix $G$ is less than one.\\
In Figure~\ref{fig:periodic} new information is sampled every three units of time. Which system has to transmit information is decided by a round robin protocol (i.e., first system one, than system two, system three and again system one and so on). \\
In Figure~\ref{fig:event} our event-triggered sampling scheme is used.\\
Over the range of $3000$ units of time the system with periodic sampling
transmitted $1000$ new information, whereas in our scheme the events were
triggered only $595$ times. By looking at Figure~\ref{fig:periodic} and
Figure~\ref{fig:event} it seems like the periodic sampled system converges
a bit faster. 
Indeed, the systems state norm of the event-triggered system at time
$3000$ is already reached by the periodic sampled system after $2486$
(i.e., $828$ periodic samplings) units of time.
But still the number of triggered events ($595$) is smaller than the number of periodic events ($828$).\\
A representation of how the the different systems (1,2, or 3) sample their
state is depicted in Figures~\ref{fig:begin}-\ref{fig:end}. The first
picture shows the sampling behavior at the beginning ($t\in[0,100]$) of
the simulation. The other two are from the middle ($t\in[1500,1600]$) and
the end ($t\in[2800,2900]$) of the simulation, respectively.  There is a
small overshoot for some of the trajectories in
Figure~\ref{fig:periodic}. This behavior cannot be seen in
Figure~\ref{fig:event}, because in the event-triggered implementation
information is transmitted more frequently at the beginning by systems 2
and 3, whereas in the periodic implementation transmission starts (for
systems 2 and 3) a few samples later, as can be seen in
Figure~\ref{fig:begin}. The reason for this is that we set $\hat x=0$
instead of initializing $e_0=0$. This is another possibility of
initializing the controller (and hence the initial error) than the one
described in Remark~\ref{rem:trigg}.

\section{A nonlinear example}\label{sect.nonlinear.example}
\label{sec:nonlinear}
The following interconnection of $N=2$ subsystems
\[
\ba{rcl}
\dot x_1 &=& x_1x_2 +x_1^2 u_1\\
\dot x_2 &=& x_1^2 + u_2
\ea
\]
is considered  under the assumption that each controller  can only access the state of the system it controls.
The control laws are chosen accordingly as
\[
u_1=-(x_1+e_1)\;,\; u_2=-k(x_2+e_2)\;,\; k>0\;.
\]
Let $V_i(x_i)=\frac{1}{2} x_i^2$ for $i=1,2$. Then
\begin{equation}
\dot V_1(x_1):=\nabla V_1(x_1)(-x_1^3 + x_1 x_2-x_1^2 e_1)
\leq x_1^2(-\frac{1}{2}x_1^2+|x_2|+\frac{1}{2}e_1^2)
\end{equation}
from which we can deduce
\[
\frac{1}{4}x_1^2\ge |x_2|+\frac{1}{2}e_1^2\quad \Rightarrow \quad \dot V_1(x_1)\le - \frac{1}{4}x_1^4\,.
\]
Since the left-hand side of the implication is in turn implied by
$V_1(x_1)\ge \max\{\sqrt{32V_2(x_2)}, 2e_1^2\}$, this shows that the
first subsystem fulfills Assumption~\ref{a1} with
\begin{multline}
\mu_1=\max\;,\; \gamma_{11}(r)=0\;,\; \gamma_{12}(r)=\sqrt{32r}\;,\; \eta_{11}(r)=2r^2\;,\; \eta_{12}(r)=0\;.
\end{multline}
Similarly
\[
\dot V_2(x_2):=\nabla V_2(x_2)(x_1^2 -k x_2-k e_2)\leq |x_2|(-k|x_2|+x_1^2+k |e_2|)
\]
and therefore
\[
V_2(x_2)\ge \max\{\frac{32}{k^2} V_1^2(x_1), 8 e_2^2\}\quad \Rightarrow \quad \dot V_2(x_2)\le - \frac{k}{2}x_2^2\;,
\]
i.e. the second subsystem satisfies Assumption~\ref{a1} with
\begin{multline}
\mu_2=\max\;,\; \gamma_{21}(r)=\frac{32}{k^2} r^2\;,\; \gamma_{22}(r)=0\;,\;
 \eta_{21}(r)=0\;,\; \eta_{22}(r)=8 r^2\;.
\end{multline}
As discussed in Section \ref{sec:lyap}, in the case of $N=2$ the $\Omega$-path
can be chosen as $\sigma_1={\rm Id}$ and $\gamma_{21}<\sigma_{2}<\gamma_{12}^{-1}$.
Provided that $k>32$, one can set
$\sigma_2(r)=\overline \sigma^2 r^2$, with $\overline \sigma^2\in (\frac{32}{k^2}, \frac{1}{32})$.
If $\varphi \in ({\cal K}_\infty\cup \{0\})^{N\times N}$ is additionally chosen as
\[
\varphi_{11}(r)=\sqrt{32}\overline \sigma r\;,\; \varphi_{12}\equiv \varphi_{21}\equiv 0\;,\;
\varphi_{22}(r)=\frac{32}{k^2} r^2
\;,
\]
then Assumption \ref{a2} is satisfied. In view of the choice of
$\sigma$, $\mu$ and $\varphi$, the requirement (\ref{omega.condition}) boils down to the condition
$\Gamma_\mu(\sigma(r))<\sigma(r)$ which is equivalent to the small-gain condition $\gamma_{12}\circ \gamma_{21}<{\rm Id}$
(see Section \ref{sec:lyap}). This small-gain condition is fulfilled by the choice of $k$,
since $\gamma_{12}\circ \gamma_{21}(r)=\frac{32}{k} r$. Hence Theorem \ref{t1} applies and
provides an expression for the functions $\chi_i$ used in the event-triggered implementation of the control laws. The functions are given explicitly by
\[\chi_1(r):=\frac{1}{\sqrt{8}\overline \sigma}r^2,\quad
\chi_2(r):=\frac{\overline\sigma^2k^2}{4}r^2
\;.\]
Simulation results for the initial condition $x_{1}(0)=-4,\,x_2(0)=3,\,\hat x_1(0)=-4$ and $\hat x_2(0)=3$
can be found for $t\in[0,0.5]$ in Figures~\ref{fig:periodicnon}--\ref{fig:eventnon}.\\
The trajectory of the first system is given in blue and for the second system in green.
The input is calculated using the red and turquoise values accordingly.\\
Figure~\ref{fig:eventnon} shows the event triggering scheme from Theorem~\ref{t3a}. After $0.5$ seconds $12$ events were triggered. Recall that we start with $e(0)=0$. This initialization is not counted as an event. The shortest time between two events is $0.0155$ seconds. In Figure~\ref{fig:periodicnon} we used a periodic sampling scheme with a sampling period of $0.0155$ seconds leading to a total of $66$ samples. Compared to the event triggering scheme no major improvement in the performance of the closed loop system can be recognized. Although in the periodically sampled system more than five times the amount of information was transmitted.\\
If we use a Round Robin protocol with $12$ periodic samples as in Figure~\ref{fig:nonlinear1} the sampling scheme is not able to stabilize the system.
\begin{figure}[htb]
 \centering
 \begin{minipage}[t]{6 cm}
     \includegraphics[width=6 cm,height=3.8cm]{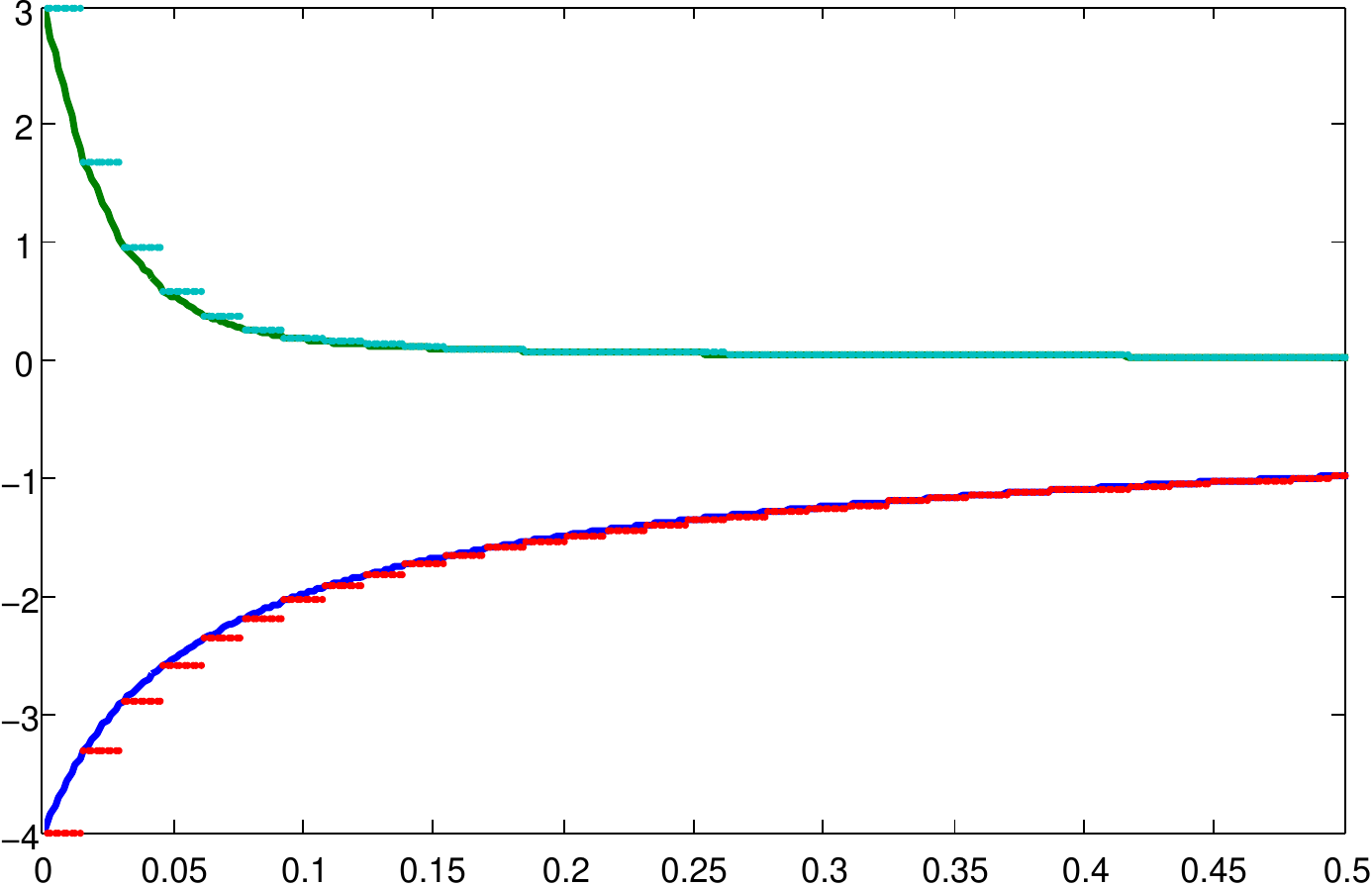}
     \put(-175,55){\tiny{\begin{sideways} $x$ \end{sideways}}}
                   \put(-13,-2){\tiny{$t$}}
   \caption{\label{fig:periodicnon}Trajectories of the interconnected system with periodic sampling (66 samples)}
 \end{minipage}
\begin{minipage}[t]{6 cm}
\includegraphics[width=6 cm,height=3.8cm]{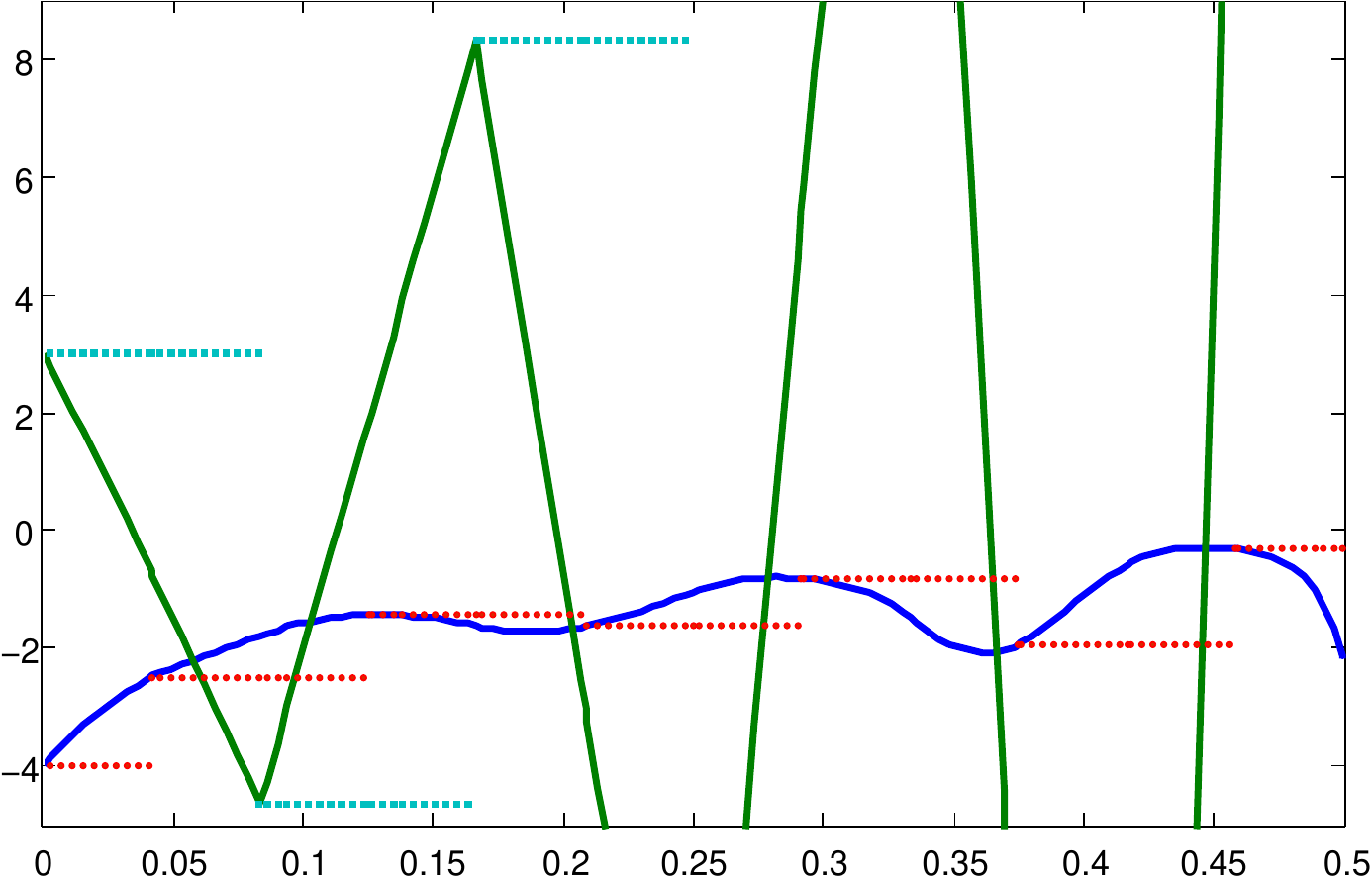}
 \put(-173,55){\tiny{\begin{sideways} $x$ \end{sideways}}}
                   \put(-13,-2){\tiny{$t$}}
\caption{\label{fig:nonlinear1}Trajectories of the interconnected system with periodic sampling (12 samples Round Robin)}
\end{minipage}
\end{figure}
\begin{figure}[htb]
 \centering

\begin{minipage}[t]{6 cm}
    \includegraphics[width=6 cm,height=3.8cm]{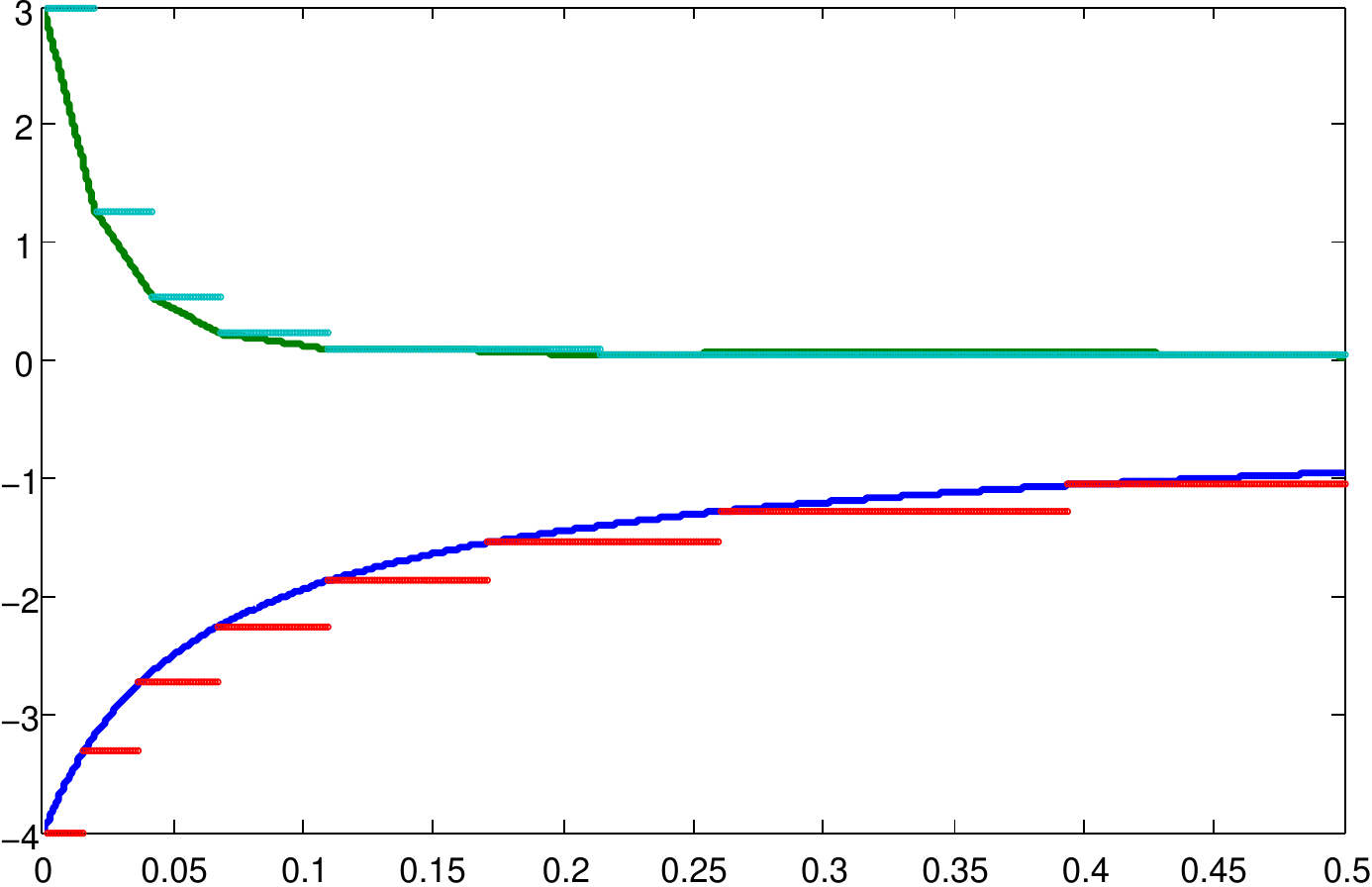}
     \put(-175,55){\tiny{\begin{sideways} $x$ \end{sideways}}}
                       \put(-13,-2){\tiny{$t$}}
   \caption{\label{fig:eventnon}Trajectories of the interconnected system with event-triggered sampling (12 events)}
 \end{minipage}
\begin{minipage}[t]{6 cm}
 \includegraphics[width=6 cm,height=3.8cm]{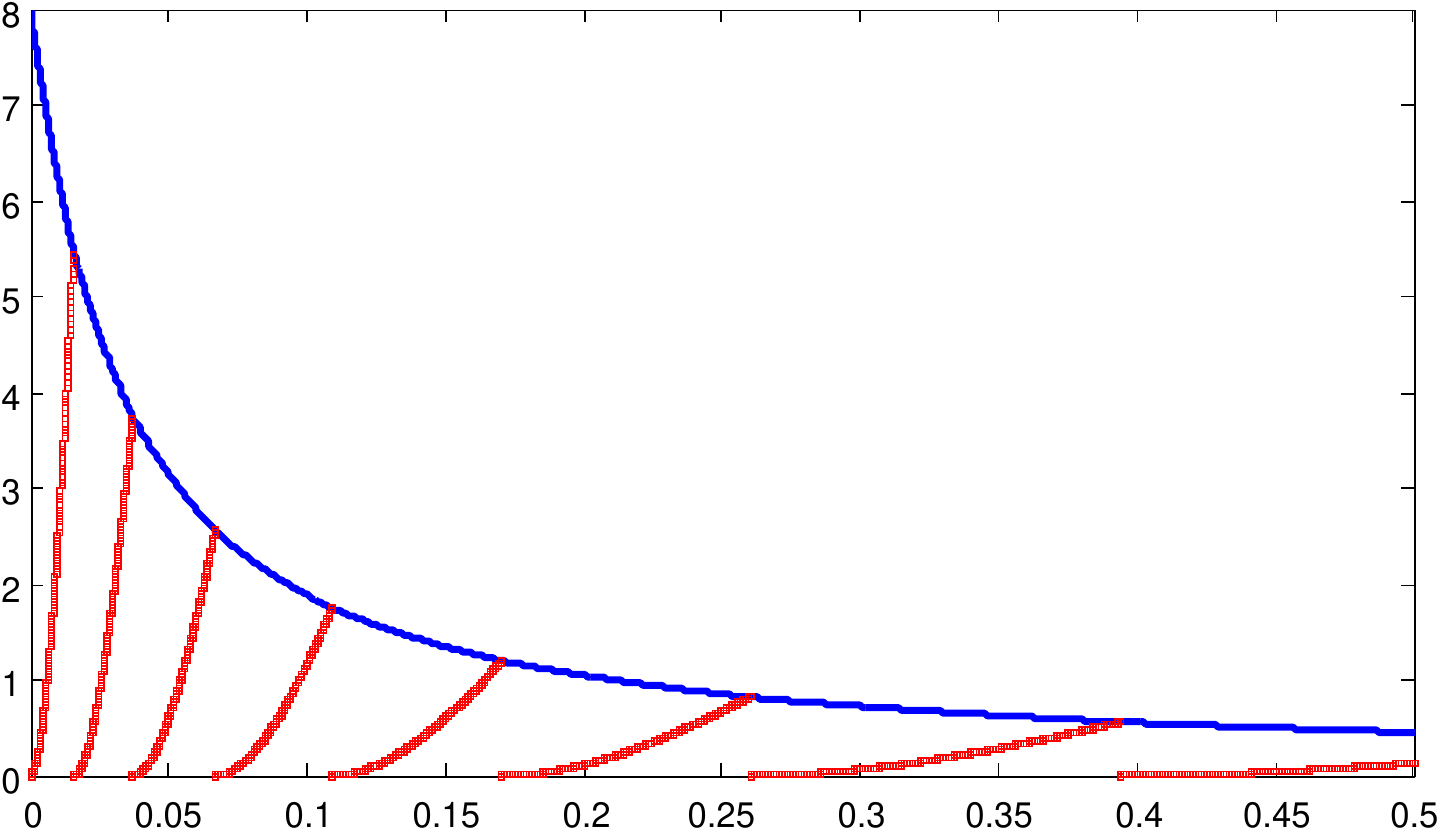}
 \put(-13,-2){\tiny{$t$}}
\caption{\label{fig:evsv}Lyapunov function and $\chi_2(\|e_2\|)$ for the second subsystem}
\end{minipage}
\end{figure}
In Figure~\ref{fig:evsv} the Lyapunov function for the second subsystem
together with $\chi_2(\|e_2\|)$ is plotted. Every time the red curve (the
error function) hits the blue line, the error is reset to zero.
\section{On Zeno-free distributed event-triggered
control}\label{sec.towards} The aim of this section is to show
that it is possible to design distributed event-triggered control
schemes for which the accumulation of the sampling times in finite
time does not occur. The focus is again on  the system
(\ref{interconnected.system}), namely:
\be\label{interconnected.system.2} \ba{rcl} \dot x_i &=& f_i(x,
g_i(x+e))\;. \ea\ee
We present two different approaches for a Zeno free event-triggered control scheme.
The first is based on a practical ISS-Lyapunov assumption whereas the second tries to lower the amount of event-triggering by reducing unneeded information transmissions.
\subsection{Practical Stabilization}
\label{sec:practical}
Here we adopt a slight variation of the input-to-state
stability property from Assumption~\ref{a1}.
\begin{assumption}\label{a4}
For $i\in\mathcal{N}$, there exist a differentiable function
$V_i: \R^{n_i}\to \R_{+}$, and class-${\cal K}_\infty$ functions
$\alpha_{i1}, \alpha_{i2}$ such that
\[
\alpha_{i1}(||x_i||)\le V_i(x_i)\le \alpha_{i2}(||x_i||)\;.
\]
Moreover there exist functions $\mu_i\in {\rm MAF}_{2N}$,
$\gamma_{ij},\eta_{ij}\in {\cal K}_\infty$, for $j\in\mathcal{N}$,
positive definite functions $\alpha_i$ and positive constants $c_{i}$, for
$i\in\mathcal{N}$, such that
%

\begin{multline}\label{aa5p}
 V_i(x_i)\ge \max\{\mu_i(\gamma_{i1}(V_1(x_1)),
\ldots, \gamma_{iN}(V_N(x_N)), \eta_{i1}(||e_1||), \ldots,
\eta_{iN}(||e_N||)),c_i\}\\
\Rightarrow  \nabla V_i(x_i) f_i(x, g_i(x+e))\le -
\alpha_i(||x_i||)\;.
\end{multline}
\end{assumption}
\begin{remark}
    Systems satisfying Assumption~\ref{a4} are usually referred to as
    input-to-state practically stable (ISpS) (\cite{jiang.et.al.aut96}).
\end{remark}
We now state a new version of Theorem \ref{t1} for system
(\ref{interconnected.system.2}).
\begin{theorem}\label{t3}
Let Assumptions \ref{a2} and \ref{a4} hold. Let
$V(x)=\max_{i\in{\cal N}} \sigma_i^{-1}(V_i(x_i))$.  Assume that for each $j\in{\cal N}$ ,
\be\label{str2} \max\{\sigma_j^{-1}(V_j(x_j)),c_j\}\geq\hat\eta_j(\|e_j\|)\;,
\ee
where
\begin{equation}
 \label{eq:etahat}
\hat \eta_{j}=\max_{i\in Z(j)}\varphi^{-1}_{ij}\circ\eta_{ij}\;.
\end{equation}
Then there exists a positive definite $\alpha:\R_+\rightarrow\R_+$ such that
\[
\langle p,f(x,g(x+e))\rangle\le -\alpha(||x||),
\; \forall p \in \partial V(x)\;,
\]
for all $x=(x_1^\top\,x_2^\top\,\ldots\,x_N^\top)^\top\;\in\;\{x:
V(x)\ge\hat c:=\max_i\{c_i,\sigma_i^{-1}(c_i)\} \}$,
where
\[
f(x,g(x+e)) = \left(\ba{c}
f_1(x, g_1(x+e))\\
\ldots\\
f_N(x, g_N(x+e))\\
\ea\right)\;.
\]
\end{theorem}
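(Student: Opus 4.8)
The plan is to mimic closely the proof of Theorem~\ref{t1}, carrying along the additional constant $\hat c$ that appears because of the practical (ISpS) nature of Assumption~\ref{a4}. Fix a point $x$ with $V(x)\ge\hat c$, let ${\cal N}(x)\subseteq{\cal N}$ be the set of indices realizing the maximum in $V(x)=\max_{i}\sigma_i^{-1}(V_i(x_i))$, pick $i\in{\cal N}(x)$, and set $r=V(x)$. As in Theorem~\ref{t1}, Assumption~\ref{a2} gives $V_i(x_i)=\sigma_i(r)>\overline\Gamma_{\mu,i}(\sigma(r),\varphi(r))$, and monotonicity together with $r\ge\sigma_j^{-1}(V_j(x_j))$ yields $\gamma_{ij}(\sigma_j(r))\ge\gamma_{ij}(V_j(x_j))$ exactly as in \eqref{gammaij}. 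The new ingredient is the verification that $\varphi_{ij}(r)\ge\eta_{ij}(\|e_j\|)$ for all $j$: for $j\notin C(i)$ this is trivial since $\varphi_{ij}=\eta_{ij}=0$, while for $j\in C(i)$ (equivalently $i\in Z(j)$) one uses \eqref{str2} and the definition \eqref{eq:etahat} of $\hat\eta_j$. Here the argument splits into the two cases of the $\max$ in \eqref{str2}: if $\sigma_j^{-1}(V_j(x_j))\ge\hat\eta_j(\|e_j\|)$ then the chain of inequalities in \eqref{varphi} goes through verbatim, applying $\varphi_{ij}$ to both sides; if instead $c_j\ge\hat\eta_j(\|e_j\|)$, then I would instead use $r=V(x)\ge\sigma_j^{-1}(c_j)$ (which holds because $V(x)\ge\hat c\ge\sigma_j^{-1}(c_j)$) to get $\varphi_{ij}(r)\ge\varphi_{ij}(\sigma_j^{-1}(c_j))\ge\varphi_{ij}(\sigma_j^{-1}(\hat\eta_j(\|e_j\|)))\ge\eta_{ij}(\|e_j\|)$ by the same composition $\hat\eta_j\ge\varphi_{ij}^{-1}\circ\eta_{ij}$.

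With these bounds in hand, monotonicity of $\mu_i$ (Definition~\ref{def.maf}(ii), extended to $v\ge z$) gives
\[
\mu_i(\gamma_{i1}(\sigma_1(r)),\ldots,\varphi_{iN}(r))\ \ge\ \mu_i(\gamma_{i1}(V_1(x_1)),\ldots,\eta_{iN}(\|e_N\|))\;.
\]
Combining with $V_i(x_i)>\mu_i(\gamma_{i1}(\sigma_1(r)),\ldots,\varphi_{iN}(r))$ and, crucially, with $V_i(x_i)=\sigma_i(r)\ge\sigma_i(\hat c)\ge c_i$ (using $\hat c\ge c_i$ and monotonicity of $\sigma_i$, or directly $V(x)\ge\hat c\ge c_i$ when $\sigma_i^{-1}(V_i(x_i))=r$), the $\max$ on the left-hand side of \eqref{aa5p} is dominated, so Assumption~\ref{a4} yields $\nabla V_i(x_i)f_i(x,g_i(x+e))\le-\alpha_i(\|x_i\|)$ for every $i\in{\cal N}(x)$.

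The remainder is identical to the end of the proof of Theorem~\ref{t1}: for $p\in\partial\sigma_i^{-1}(V_i(x_i))$ write $p=\gamma_\rho\nabla V_i(x_i)$ with $\gamma_\rho\in[c_\rho,C_\rho]$, $c_\rho>0$, coming from the $\Omega$-path bounds on $(\sigma_i^{-1})'$ over the compact set $K_\rho=\{V_i(x_i):\rho/2\le\|x_i\|\le2\rho\}$, $\rho=\|x_i\|$; then $\langle p,f_i(x,g_i(x+e))\rangle\le-c_\rho\alpha_i(\rho)=:-\tilde\alpha_i(\rho)$. Setting $\alpha(r):=\min\{\tilde\alpha_i(\|x_i\|):r=\|x\|,\ i\in{\cal N}(x)\}$, which is positive for $r>0$, and invoking the Clarke-gradient chain rule from \cite{drw} for $V(x)=\max_i\sigma_i^{-1}(V_i(x_i))$, one gets $\langle p,f(x,g(x+e))\rangle\le-\alpha(\|x\|)$ for all $p\in\partial V(x)$, valid on $\{V(x)\ge\hat c\}$.

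The only genuinely new point — and the one I would expect to require care — is bookkeeping the constant: one must check that the single threshold $\hat c=\max_i\{c_i,\sigma_i^{-1}(c_i)\}$ is simultaneously large enough (i) to beat each $c_i$ inside the $\max$ of \eqref{aa5p} via $V_i(x_i)=\sigma_i(r)\ge c_i$, and (ii) to allow the fallback estimate $r\ge\sigma_j^{-1}(c_j)$ used in the second case of the $\varphi_{ij}$-bound. Both are immediate from the definition of $\hat c$ and monotonicity of the $\sigma_i$, but getting the two appearances of $c_j$ versus $\sigma_j^{-1}(c_j)$ on the correct side of each inequality is where a sign/composition slip would occur. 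Everything else is a transcription of the arguments already established for Theorem~\ref{t1}.
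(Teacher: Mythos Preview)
Your approach is exactly the paper's: reduce to Theorem~\ref{t1}, handle the two cases of the $\max$ in \eqref{str2} separately, and check $V_i(x_i)\ge c_i$ for the active index. However, the very slip you warned yourself about does occur, in two places.

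In your Case~2 for the $\varphi_{ij}$-bound you route through $\sigma_j^{-1}(c_j)$: from $r\ge\sigma_j^{-1}(c_j)$ and $c_j\ge\hat\eta_j(\|e_j\|)$ you arrive at $\varphi_{ij}(\sigma_j^{-1}(\hat\eta_j(\|e_j\|)))$, and then claim this is $\ge\eta_{ij}(\|e_j\|)$ ``by the same composition $\hat\eta_j\ge\varphi_{ij}^{-1}\circ\eta_{ij}$''. But that composition only gives $\varphi_{ij}(\hat\eta_j(s))\ge\eta_{ij}(s)$; the extra $\sigma_j^{-1}$ you inserted need not satisfy $\sigma_j^{-1}(\hat\eta_j(s))\ge\varphi_{ij}^{-1}(\eta_{ij}(s))$ in general, so the last inequality fails. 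The fix (and what the paper does) is to use the \emph{other} component of $\hat c$: since $\hat c\ge c_j$, one has directly $r\ge c_j\ge\hat\eta_j(\|e_j\|)$ and hence $\varphi_{ij}(r)\ge\varphi_{ij}(\hat\eta_j(\|e_j\|))\ge\eta_{ij}(\|e_j\|)$, with no $\sigma_j^{-1}$ in sight.

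Symmetrically, your justification of $V_i(x_i)\ge c_i$ is misattributed: ``$\hat c\ge c_i$ and monotonicity of $\sigma_i$'' yields only $\sigma_i(\hat c)\ge\sigma_i(c_i)$, not $\sigma_i(\hat c)\ge c_i$. Here one needs the \emph{other} component, $\hat c\ge\sigma_i^{-1}(c_i)$, which after applying $\sigma_i$ gives $V_i(x_i)=\sigma_i(r)\ge\sigma_i(\hat c)\ge c_i$. So the two pieces $c_j$ and $\sigma_j^{-1}(c_j)$ in the definition of $\hat c$ are each used exactly once, but for opposite purposes to what you wrote: $c_j$ feeds the $\varphi_{ij}$-bound in Case~2, while $\sigma_i^{-1}(c_i)$ feeds the check $V_i(x_i)\ge c_i$. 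With these two swaps the argument is complete and coincides with the paper's proof.
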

\begin{proof}
Let ${\cal
N}(x)\subseteq {\cal N}$
 be the indices $i$ such that $V(x)=
\sigma_i^{-1}(V_i(x_i))$.\\
Take any pair of indices $i,j\in {\cal N}$. By definition,
$V(x)\ge \sigma_j^{-1}(V_j(x_j))$ and
\be\label{circle}
\gamma_{ij}(\sigma_j(V(x)))\ge \gamma_{ij}(V_j(x_j))\;. \ee Let
$i\in {\cal N}(x)$. Then by Assumption \ref{a2},
we have:
\begin{multline}\label{wh}
V_i(x_i)=\sigma_i(V(x))>\\\mu_i(\gamma_{i1}(\sigma_{1}(V(x))),\ldots,
\gamma_{iN}(\sigma_{N}(V(x))), \varphi_{i1}(V(x)),\ldots,
\varphi_{iN}(V(x)))\;.
\end{multline}
Bearing in mind (\ref{circle}), we also have
\begin{multline}\label{wh5}
V_i(x_i)=\sigma_i(V(x))>\\\mu_i(\gamma_{i1}(V_1(x_1)),\ldots,
\gamma_{iN}(V_N(x_N)), \varphi_{i1}(V(x)),\ldots,
\varphi_{iN}(V(x)))\;.
\end{multline}
Let us partition the set $\mathcal{N}:=\mathcal{P} \cup \mathcal{Q}$. The
set $\mathcal{P}$ consists of all the indices $i$ for which the first part
of the maximum in condition \eqref{str2} holds,
i.e. $i\in\mathcal{P}:\Leftrightarrow\sigma_i^{-1}(V_i(x_i))\geq c_i$; also
 $\mathcal{Q}:=\mathcal{N}\setminus\mathcal{P}$.  For all
$j\in\mathcal{P}$ we have by \eqref{str2}
$\sigma_j^{-1}(V_j(x_j))\geq\hat\eta_j(\|e_j\|)$ and hence using
\eqref{eq:etahat} (as mentioned before, the case $j\notin C(i)$ is
trivial)
\begin{multline}
 \label{wh6}
 \varphi_{ij}(V(x))\geq\varphi_{ij}\circ\sigma_j^{-1}(V_j(x_j))\geq\varphi_{ij}\circ\hat\eta_j(\|e_j\|)\geq\\\varphi_{ij}\circ\varphi^{-1}_{ij}\circ\eta_{ij}(\|e_j\|)=\eta_{ij}(\|e_j\|)\,.
\end{multline}
Assume now that $V(x)\geq \hat c$. For all $j\in\mathcal{Q}$ we have by \eqref{str2} $c_j\geq\hat\eta_j(\|e_j\|)$ and so
\begin{equation}
 \label{wh7}
 \varphi_{ij}(V(x))\geq\varphi_{ij}(\hat c)\geq\varphi_{ij}(c_j)\geq\varphi_{ij}\circ\hat\eta_j(\|e_j\|)\geq\eta_{ij}(\|e_j\|).
 \end{equation}
 Combining \eqref{wh6} and \eqref{wh7} we get for all $j\in\mathcal{N}$
 \[\varphi_{ij}(V(x))\geq\eta_{ij}(\|e_j\|)\;,\]
provided that (\ref{str2}) holds and that $V(x)\ge \hat c$.
 Substituting the latter in \eqref{wh5} yields
\begin{multline}\label{wh2}
V_i(x_i)=\sigma_i(V(x))>\\\mu_i(\gamma_{i1}(V_1(x_1)),\ldots,
\gamma_{iN}(V_N(x_N)),  \eta_{i1}(||e_1||),\ldots,
\eta_{iN}(||e_N||))\;.
\end{multline}
Because $i\in\mathcal{N}(x)$ and $V(x)\geq\hat c=\max_i\{c_i,\sigma_i^{-1}(c_i)\}$,
we have\\ $V(x)=\sigma_i^{-1}(V_i(x_i))\geq \hat c\geq \sigma_i^{-1}(c_i)$ and finally we conclude
\[V_i(x_i)\geq c_i\;.\]
The latter together with \eqref{wh2}
is the left-hand side of the implication (\ref{aa5p}).
Hence, $\nabla V_i(x_i) f_i(x, g_i(x+e))\le - \alpha_i(||x_i||)$
for all $i\in {\cal N}(x)$.
Now we can repeat the same arguments of the last part of the proof
of Theorem \ref{t1}, and conclude that for all $x$ such that
$V(x)\ge \hat c$ and for
all $p\in \partial V(x)$, $\langle p,f(x,g(x+e))\rangle\le
-\alpha(||x||)$.
\end{proof}
Now that we established an analog to Theorem~\ref{t1} for the case
of practical stability, we are able to infer stability of the
closed loop event-triggered control system \eqref{def:trigsys}
without the assumption that Zeno behavior does not occur.
\begin{theorem}
 \label{t5}
Let Assumptions \ref{a2} and \ref{a4} hold. Consider
the interconnected system
\be\label{is2a} \dot x_i(t)=f_i(x(t), g_i(\hat x(t)))\;,\quad i\in
{\cal N}\;, \ee
as in \eqref{def:trigsys} with triggering conditions given by
\begin{equation}
 T_i(x_i,e_i)=\hat\eta_i(\|e_i\|)-\max\{\sigma^{-1}_i\circ V_i(x_i),\hat c_i\}\,,
\label{eq:triggcond1}
\end{equation}
with $\hat\eta_i$ defined in \eqref{eq:etahat} for all
$i\in\mathcal{N}$. Then the origin is a globally uniformly
practically stable equilibrium for (\ref{is2a}).
\end{theorem}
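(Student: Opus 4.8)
The plan is to follow the proof of Theorem~\ref{t3a}, but invoking Theorem~\ref{t3} in place of Theorem~\ref{t1} and adding an argument that rules out accumulation of event times. First I would set $\tilde f(t,x)=f(x,g(x+e(t)))$, where $e(\cdot)$ is the piecewise-$C^1$ controller error produced by the triggering rule \eqref{eq:triggcond1} (with $e_0=0$); exactly as in the proof of Theorem~\ref{t3a}, $\tilde f$ satisfies the Carath\'eodory conditions, so on the maximal interval of existence $[0,T_{\max})$ there is a unique solution $x(\cdot)$ of $\dot x=\tilde f(t,x)$. The triggering function is designed so that $T_i(x_i(t),e_i(t))<0$ between consecutive events, i.e.\ $\hat\eta_i(\|e_i(t)\|)\le\max\{\sigma_i^{-1}(V_i(x_i(t))),\hat c_i\}$ for every $i\in\mathcal{N}$ and every $t\in[0,T_{\max})$; this is precisely hypothesis \eqref{str2} of Theorem~\ref{t3}. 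Hence Theorem~\ref{t3} yields a positive definite $\alpha$ such that, along $x(\cdot)$, $\langle p,\tilde f(t,x(t))\rangle\le-\alpha(\|x(t)\|)$ for all $p\in\partial V(x(t))$ at every $t$ with $V(x(t))\ge\hat c$.

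Next I would pass from this infinitesimal inequality to the behaviour of $t\mapsto V(x(t))$, exactly as in Theorem~\ref{t3a}: since $x(\cdot)$ is absolutely continuous and $V$ is locally Lipschitz, the Clarke chain rule (\cite{tesi.ceragioli}) gives $\tfrac{d}{dt}V(x(t))\le-\alpha(\|x(t)\|)$ for a.e.\ $t$ with $V(x(t))\ge\hat c$. Because $V$ is positive definite and radially unbounded and $\hat c>0$, this implies that the compact sublevel set $\Omega:=\{V\le\max\{V(x_0),\hat c\}\}$ is forward invariant (on $\{V\ge\hat c\}$ the derivative of $V$ is strictly negative, so in particular the level $\{V=\hat c\}$ cannot be crossed outward), that $x(\cdot)$ stays in $\Omega$, that $V(x(t))$ decreases monotonically while it exceeds $\hat c$ and enters $\{V\le\hat c\}$ in finite time, and then remains there. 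A standard ISpS-Lyapunov / comparison argument (as in \cite{jiang.et.al.aut96}, \cite{bacciotti.rosier.book}) converts this into a class-$\mathcal{KL}$ estimate modulo the residual set $\{V\le\hat c\}$, that is, global uniform practical stability of the origin --- provided $T_{\max}=\infty$.

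The remaining, and main, step is to show that no Zeno behaviour occurs, which is exactly what the constant $\hat c_i$ in \eqref{eq:triggcond1} buys us. On $[0,T_{\max})$ the trajectory and all broadcast values $\hat x_i(t_k)=x_i(t_k)$ lie in the compact set $\Omega$, so by continuity of the $f_i$ and $g_i$ there is $M>0$ with $\|\dot x_i(t)\|=\|f_i(x(t),g_i(\hat x(t)))\|\le M$ for all $t\in[0,T_{\max})$, hence $\|\dot e_i(t)\|\le M$. An event of system $i$ at time $t$ requires $\hat\eta_i(\|e_i(t)\|)\ge\max\{\sigma_i^{-1}(V_i(x_i(t))),\hat c_i\}\ge\hat c_i$, i.e.\ $\|e_i(t)\|\ge\hat\eta_i^{-1}(\hat c_i)=:\delta_i>0$; since $e_i$ is reset to $0$ at every event of system $i$, two consecutive events of system $i$ are at least $\delta_i/M$ apart. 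Minimising over the finitely many indices gives a uniform positive lower bound on the inter-event times, so the event sequence has no accumulation point on $[0,T_{\max})$; combined with $x(\cdot)\subset\Omega$ and the boundedness of $\tilde f$ this forces $T_{\max}=\infty$, and the conclusions of the previous paragraph then hold globally in time. The one delicate point is the apparent circularity --- boundedness of $x$ is needed to get the dwell-time bound, while completeness of the solution is needed to conclude the Lyapunov estimate globally --- which is resolved by carrying out both arguments on the maximal existence interval $[0,T_{\max})$ before concluding $T_{\max}=\infty$.
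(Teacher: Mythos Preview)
Your proposal is correct and follows essentially the same route as the paper's proof: invoke Theorem~\ref{t3} in place of Theorem~\ref{t1}, use the resulting Lyapunov decrease on $\{V\ge\hat c\}$ to bound the trajectory, then exploit the constant $\hat c_i$ in the triggering threshold together with the boundedness of $\dot e_i=-\dot x_i$ to obtain a uniform positive dwell time and hence exclude Zeno. If anything, you are more careful than the paper: you work explicitly on the maximal interval $[0,T_{\max})$ and spell out the bootstrap that resolves the apparent circularity between boundedness and global existence, and you give the explicit dwell-time estimate $\delta_i/M$ with $\delta_i=\hat\eta_i^{-1}(\hat c_i)$, whereas the paper only says the time for $e_j$ to grow from zero past the threshold is bounded away from zero. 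One cosmetic point: the paper only assumes the $g_i$ are locally bounded, not continuous, so your phrase ``by continuity of the $f_i$ and $g_i$'' should read ``by continuity of the $f_i$ and local boundedness of the $g_i$''; the conclusion (a uniform bound $M$ on $\|\dot x_i\|$ over the compact set $\Omega$) is unaffected.
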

\begin{proof}
 Here we want to adopt the same line of reasoning as in the proof of Theorem~\ref{t3a}. To this end, we have to make sure that by the triggering conditions given by \eqref{eq:triggcond1} no Zeno behavior is induced. Note that in between triggering events $\dot e(t) = -\dot x(t)$ for all $t\in(t_k,t_{k+1})$ by the definition of \eqref{def:trigsys}.  The triggering conditions
$T_i(x_i,e_i)=\hat\eta_i(\|e_i\|)-\max\{\sigma^{-1}_i\circ
V_i(x_i),\hat c_i\}\geq 0$ ensure that $\max\{\sigma^{-1}_i\circ
V_i(x_i),\hat c_i\}\geq\hat\eta_i(\|e_i\|)$ for all positive
times.
Following the same reasoning of Theorem~\ref{t3a}, with the
exception that we have to replace the application of
Theorem~\ref{t1} with the application of Theorem~\ref{t3}, one
proves that $V(x(t))$ is decreasing along the solution $x(t)$ on
its domain of definition. Hence,  $x(t)$ is bounded  on
its domain of definition. Since
$\max\{\sigma^{-1}_i\circ V_i(x_i(t)),\hat
c_i(t)\}\geq\hat\eta_i(\|e_i(t)\|)$, then also
$e(t)$ is bounded and so is $\hat x(t)=x(t)+e(t)$. As
$e_j(t_k^+)=0$ for each index $j$ which triggered an event and
$\dot e(t)$ is bounded in between events ($\dot e(t)=-\dot
x(t)=-f(x(t),g(\hat x(t)))$), the time when the next event will be
triggered by system $j$ is bounded away from zero because the time
it takes $e_j$ to evolve from zero to $c_j$ is bounded
away from zero. Hence, either there is a finite number of  times $t_k$
or $t_k\rightarrow\infty$ as $k$ goes to infinity. The solution $x(t)$ is then defined
for all positive times and Theorems~\ref{t3a} and \ref{t3} allow us to conclude.
\end{proof}
In hybrid systems, the practice of avoiding Zeno effects while retaining stability
in the practical sense is referred to as temporal regularization (see \cite{RG-RS-AT:09}, p.\ 73,
and references therein). Here, the regularization is achieved via a notion of practical
ISS. In the context of event-triggered ${\cal L}_2$-disturbance attenuation control
for linear systems temporal regularization is studied in \cite{donkers.heemels.ifac11}.

\subsection{Parsimonious Triggering}

In Section~\ref{sec:practical} a way to exclude the occurrence of Zeno behavior for the price of practical stability rather than asymptotic stability was shown.
Here we want to provide a way to exclude the Zeno effect by introducing a new triggering scheme, but still achieving asymptotic stability. The main idea behind the new triggering scheme, which will be introduced in Theorem~\ref{t7b} is that if the error of the $i$th subsystem is bigger than its Lyapunov function but still small compared to the Lyapunov function of the overall system, no transmission of the $i$th subsystem is needed. \\
For future use we need also a slight variation of Theorem~\ref{t1}.
Here we exploit the fact that we can either compare each state to its corresponding error (as in Theorem~\ref{t1}) or each error to the Lyapunov function of the overall system as shown in the next theorem.
\begin{theorem}
 \label{t2b}
Let Assumptions \ref{a1} and \ref{a2} hold. Let $V(x)=\max_{i\in
{\cal N}} \sigma_i^{-1}(V_i(x_i))$ and, for each $j\in {\cal N}$,
define:
\begin{equation}
\label{eq:twostar}
\hat\gammaeo_j=\max_{i\in {Z(j)}}
\varphi_{ij}^{-1}\circ\gammaeo_{ij}\;.
\end{equation}
Then there exist a positive definite $\alpha\,:\,\R_+\rightarrow\R_+$ such that the condition
\be\label{cond:iuj}
V(x)\geq\hat\eta_j(\|e_j\|),\;\forall j\in\mathcal{N}
\ee
implies
\[
\langle p,f(x,g(x+e))\rangle\le -\alpha(||x||),
\; \forall p \in \partial V(x)\;.
\]
\end{theorem}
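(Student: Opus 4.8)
The plan is to mimic the proof of Theorem~\ref{t1} almost verbatim, replacing the per-subsystem trigger condition $V_i(x_i)\ge\chi_i(\|e_i\|)$ by the global condition \eqref{cond:iuj}, and showing that the two hypotheses ultimately produce the \emph{same} intermediate inequality, namely that for each active index $i\in\mathcal{N}(x)$ one has $\varphi_{ij}(V(x))\ge\gammaeo_{ij}(\|e_j\|)$ for all $j$. Once that inequality is in hand, the rest of the argument — dominating the monotone aggregation $\mu_i$, invoking \eqref{aa1} to get $\nabla V_i(x_i)f_i(x,g_i(x+e))\le-\alpha_i(\|x_i\|)$ on $\mathcal{N}(x)$, and then passing from the $\sigma_i^{-1}(V_i(x_i))$ pieces to the Clarke gradient $\partial V(x)$ via the $\Omega$-path bounds $c_\rho\le(\sigma_i^{-1})'\le C_\rho$ — is word-for-word the last two paragraphs of the proof of Theorem~\ref{t1} and can simply be cited.

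Concretely, first fix $x$, let $\mathcal{N}(x)$ be the indices realizing $V(x)=\sigma_i^{-1}(V_i(x_i))$, set $r=V(x)$, and pick $i\in\mathcal{N}(x)$. By Assumption~\ref{a2}, $V_i(x_i)=\sigma_i(r)>\overline{\Gamma}_{\mu,i}(\sigma(r),\varphi(r))=\mu_i(\gamma_{i1}(\sigma_1(r)),\dots,\gamma_{iN}(\sigma_N(r)),\varphi_{i1}(r),\dots,\varphi_{iN}(r))$. As in \eqref{gammaij}, monotonicity of $\gamma_{ij}$ together with $r=V(x)\ge\sigma_j^{-1}(V_j(x_j))$ gives $\gamma_{ij}(\sigma_j(r))\ge\gamma_{ij}(V_j(x_j))$ for every $j$. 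For the error terms: when $j\notin C(i)$ we have $\varphi_{ij}=0=\gammaeo_{ij}$, so $\varphi_{ij}(r)\ge\gammaeo_{ij}(\|e_j\|)$ trivially; when $j\in C(i)$ (equivalently $i\in Z(j)$), use \eqref{cond:iuj} in the form $V(x)\ge\hat\gammaeo_j(\|e_j\|)$ with $\hat\gammaeo_j=\max_{i'\in Z(j)}\varphi_{i'j}^{-1}\circ\gammaeo_{i'j}$ from \eqref{eq:twostar}, so that $\varphi_{ij}(V(x))\ge\varphi_{ij}\big(\varphi_{ij}^{-1}\circ\gammaeo_{ij}(\|e_j\|)\big)=\gammaeo_{ij}(\|e_j\|)$, using monotonicity of $\varphi_{ij}$ and the fact that the max defining $\hat\gammaeo_j$ is over a set containing $i$. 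This is exactly the content of \eqref{varphi} but obtained directly from the cheaper hypothesis \eqref{cond:iuj}.

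With these two families of inequalities, monotonicity of $\mu_i$ (Definition~\ref{def.maf}(ii)) yields
\[
\mu_i(\gamma_{i1}(\sigma_1(r)),\dots,\varphi_{iN}(r))\ \ge\ \mu_i(\gamma_{i1}(V_1(x_1)),\dots,\gammaeo_{iN}(\|e_N\|)),
\]
hence $V_i(x_i)>\mu_i(\gamma_{i1}(V_1(x_1)),\dots,\gammaeo_{i1}(\|e_1\|),\dots,\gammaeo_{iN}(\|e_N\|))$, which is the premise of \eqref{aa1}; thus $\nabla V_i(x_i)f_i(x,g_i(x+e))\le-\alpha_i(\|x_i\|)$ for all $i\in\mathcal{N}(x)$. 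Finally, exactly as in the closing paragraph of the proof of Theorem~\ref{t1}, for $p\in\partial\sigma_i^{-1}(V_i(x_i))$ write $p=\gamma_\rho\nabla V_i(x_i)$ with $\gamma_\rho\in[c_\rho,C_\rho]$ (the $\Omega$-path bounds with $\rho=\|x_i\|$), set $\tilde\alpha_i(\rho)=c_\rho\alpha_i(\rho)$ and $\alpha(r)=\min\{\tilde\alpha_i(\|x_i\|):r=\|x\|,\ i\in\mathcal{N}(x)\}$, and conclude via the Clarke-gradient chain rule for the max (as cited from \cite{drw}) that $\langle p,f(x,g(x+e))\rangle\le-\alpha(\|x\|)$ for all $p\in\partial V(x)$. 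The only real subtlety — and the one place the argument genuinely differs from Theorem~\ref{t1} — is checking that \eqref{cond:iuj}, which bounds $\|e_j\|$ against the \emph{global} $V(x)$ rather than the \emph{local} $V_j(x_j)$, still delivers $\varphi_{ij}(V(x))\ge\gammaeo_{ij}(\|e_j\|)$; this works precisely because $\varphi_{ij}$ is applied to the larger quantity $V(x)$, so no information about $V_j(x_j)$ is needed, and the rest goes through unchanged.
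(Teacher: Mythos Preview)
Your proposal is correct and follows essentially the same route as the paper's own proof: both observe that the only modification needed relative to Theorem~\ref{t1} is to establish $\varphi_{ij}(V(x))\ge\gammaeo_{ij}(\|e_j\|)$ for $i\in\mathcal{N}(x)$ and $j\in C(i)$, and both derive this directly from \eqref{cond:iuj} and \eqref{eq:twostar} via $\varphi_{ij}(V(x))\ge\varphi_{ij}(\hat\gammaeo_j(\|e_j\|))\ge\varphi_{ij}\circ\varphi_{ij}^{-1}\circ\gammaeo_{ij}(\|e_j\|)=\gammaeo_{ij}(\|e_j\|)$, after which the remainder of the proof of Theorem~\ref{t1} is quoted verbatim. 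If anything, your write-up is more explicit than the paper's, which simply states the key inequality and then says ``the argument after \eqref{varphi} can be repeated word by word.''
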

\begin{proof}
 The proof follows by a slight modification of the proof of Theorem~\ref{t1}.\\
For each $x$, let $\mathcal{N}(x)\subset\mathcal{N}$ be set of indices for which $V(x)=\sigma_i^{-1}(V_i(x_i))$.\\
It is sufficient to show that for all $i\in\mathcal{N}(x), \,j\in\mathcal{N}$ we have
$\varphi_{ij}(r)\geq\eta_{ij}(\|e_j\|)$, with $r=V(x)$.\\
First recall that for $j\notin C(i)$ the latter inequality trivially holds. So assume that $j\in C(i)$.
 Using \eqref{eq:twostar} and \eqref{cond:iuj}, we have
\begin{equation*}
\varphi_{ij}(V(x))\geq\varphi_{ij}(\hat\eta_{j}(\|e_j\|))\geq\varphi_{ij}\circ\varphi_{ij}^{-1}\circ\eta_{ij}(\|e_j\|)=\eta_{ij}(\|e_j\|)\;.
\end{equation*}
i.e.\ \eqref{varphi} in the proof of Theorem~\ref{t1}. Then the argument after \eqref{varphi}
can be repeated word by word. By previous arguments this concludes the proof.
\end{proof}
A triggering condition for the $j$th subsystem which yield the validity of condition \eqref{cond:iuj} would make the knowledge of the Lyapunov function $V$ of the overall system to system $j$ necessary.
This would contradict our wish for a decentralized approach.\\
The next lemma provides a decentralized way to ensure that condition \eqref{cond:iuj} holds. To this end, we give an approximation of the other states (the $W$) which will be compared to the error instead of the Lyapunov function of the overall system. Appropriately scaled, $W$ is a lower bound on the Lyapunov function of the overall system and hence can be used to check the validity of \eqref{cond:iuj}. The important advantage is, that this approximation can be calculated by using only local information.
Before we state the next lemma, define
\[\xi^{j,x_j}:=\left(\xi_1^\top,\dots,\xi_{j-1}^\top,x_j^\top,\xi_{j+1}^\top,\dots,\xi_N^\top\right)^\top\;\]
as the vector $\xi$ where the $j$th component is replaced by $x_j$. The proofs of Lemma~\ref{lem:theta}, \ref{lem:zeno}, and \ref{lem:xdotx2} are postponed to the Appendix.
\begin{lemma}
\label{lem:theta}
Let Assumptions~\ref{a1} and \ref{a2} hold. Let $V(x)=\max_{i\in
{\cal N}} \sigma_i^{-1}(V_i(x_i))$.
 Let $d_j\in\R_+$ be an approximation of $\|f_j(x,g_j(x+e))\|$ with
$|\|f_j(x,g_j(x+e))\|-d_j|\leq\tilde\kappa_j\|x_j\|$.\\
Assume that for $j\in\mathcal{N}$ there exist functions    $\Theta_j\;:\R^N\rightarrow\R$ such that $V(x)\geq\hat\eta_i(\|e_i\|)$ for all $i\neq j$ implies
\begin{equation}
\label{cond:lem22}
\Theta_j(\|x_1\|,\dots,\|x_N\|)\geq\|f_j(x,g_j(x+e))\|\,.
\end{equation}
Define
\[W(j,x_j,d_j)=\min\{\max_{i\neq j}\|\xi_i\|\;:\;\xi\in \mathcal{A}(j,x_j,d_j)\}\]
with
\begin{equation}
 \label{eq:setA2}
    \begin{aligned}
        \mathcal{A}(j,x_j,d_j)=
        \{\xi^{j,x_j}\;:\;\Theta_j&(\|\xi_1\|,\dots,\|\xi_N\|)\geq
       & d_j-\tilde\kappa_j\|x_j\|\}\;.
    \end{aligned}
\end{equation}
Then the conditions
\begin{equation}
\label{cond:lem2}
W(j,x_j,d_j)\geq\psi_j^{-1}\circ\hat\eta_j(\|e_j\|)\quad\text{and}
\quad \sigma_j^{-1}(V_j(x_j))\leq\hat\eta_j(\|e_j\|),
\end{equation}
with $\psi_j=\max_{i\neq j}\sigma_i^{-1}\circ\alpha_{i1}$
imply \[V(x)\geq\hat\eta_j(\|e_j\|)\;.\]
\end{lemma}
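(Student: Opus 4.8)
The plan is to prove the contrapositive flavor of the claim directly: assuming the two conditions in \eqref{cond:lem2} together with $V(x)\ge\hat\eta_i(\|e_i\|)$ for all $i\neq j$ (which is the hypothesis under which $\Theta_j$ is an upper bound for $\|f_j\|$), show that $V(x)\ge\hat\eta_j(\|e_j\|)$ must hold. The key observation is that if this were false, then by the second condition in \eqref{cond:lem2} we would have $\sigma_i^{-1}(V_i(x_i))<\hat\eta_j(\|e_j\|)$ for \emph{every} $i\in\mathcal{N}$: for $i=j$ directly, and for $i\neq j$ because $V(x)=\max_k\sigma_k^{-1}(V_k(x_k))<\hat\eta_j(\|e_j\|)$. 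So the whole state vector $x$ would be ``small'' in the sense that $\sigma_i^{-1}(V_i(x_i))<\hat\eta_j(\|e_j\|)$ for all $i$.

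First I would turn this into a bound on $\|x_i\|$ for $i\neq j$. From $\alpha_{i1}(\|x_i\|)\le V_i(x_i)$ and monotonicity we get $\sigma_i^{-1}(\alpha_{i1}(\|x_i\|))\le\sigma_i^{-1}(V_i(x_i))<\hat\eta_j(\|e_j\|)$, hence $\|x_i\|\le(\sigma_i^{-1}\circ\alpha_{i1})^{-1}(\hat\eta_j(\|e_j\|))$, and taking the worst case over $i\neq j$, $\max_{i\neq j}\|x_i\|\le\psi_j^{-1}(\hat\eta_j(\|e_j\|))$ with $\psi_j=\max_{i\neq j}\sigma_i^{-1}\circ\alpha_{i1}$ as defined. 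The next step is to show this contradicts the first condition $W(j,x_j,d_j)\ge\psi_j^{-1}\circ\hat\eta_j(\|e_j\|)$. For this I need that the \emph{actual} state vector $x$ (with its $j$th block $x_j$ unchanged) lies in the admissible set $\mathcal{A}(j,x_j,d_j)$: indeed, by hypothesis $\Theta_j(\|x_1\|,\dots,\|x_N\|)\ge\|f_j(x,g_j(x+e))\|\ge d_j-\tilde\kappa_j\|x_j\|$, where the last inequality is just $|\,\|f_j\|-d_j\,|\le\tilde\kappa_j\|x_j\|$. So $x\in\mathcal{A}(j,x_j,d_j)$, and therefore by definition of $W$ as the minimum of $\max_{i\neq j}\|\xi_i\|$ over that set, $W(j,x_j,d_j)\le\max_{i\neq j}\|x_i\|\le\psi_j^{-1}(\hat\eta_j(\|e_j\|))$. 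If this inequality is strict we have our contradiction immediately; the borderline equality case is handled by noting the first condition in \eqref{cond:lem2} then forces $\max_{i\neq j}\|x_i\|=\psi_j^{-1}\circ\hat\eta_j(\|e_j\|)$, but the strict inequality $\sigma_i^{-1}(\alpha_{i1}(\|x_i\|))<\hat\eta_j(\|e_j\|)$ derived above rules this out. Hence $V(x)\ge\hat\eta_j(\|e_j\|)$, as desired.

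The main obstacle I anticipate is bookkeeping around the monotone-aggregation / $\Omega$-path machinery rather than any deep step: one must be careful that $\psi_j^{-1}$ is well defined (each $\sigma_i^{-1}\circ\alpha_{i1}$ is a composition of a $\mathcal{K}_\infty$ function with the locally Lipschitz $\sigma_i^{-1}$, so the max is a strictly increasing unbounded function and invertible), and that the minimization defining $W$ is attained (or at least that the infimum suffices, since we only use $W\le\max_{i\ne j}\|x_i\|$ for the \emph{particular} admissible point $\xi=x$, so attainment is not actually needed for the argument). A secondary subtlety is that $\Theta_j$ is only guaranteed to dominate $\|f_j\|$ under the standing hypothesis $V(x)\ge\hat\eta_i(\|e_i\|)$ for $i\neq j$, so this hypothesis must be invoked precisely at the point where $x$ is shown to lie in $\mathcal{A}(j,x_j,d_j)$; everything else is monotonicity.
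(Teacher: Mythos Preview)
Your overall strategy coincides with the paper's: show that the actual state $x$ belongs to $\mathcal{A}(j,x_j,d_j)$ (the paper does this via an auxiliary set $A\subset\mathcal{A}$ containing the true state, but the content is identical), deduce $W(j,x_j,d_j)\le\max_{i\neq j}\|x_i\|$, and then pass between bounds on $\max_{i\neq j}\|x_i\|$ and bounds on $V(x)$ through $\psi_j$. The paper argues directly rather than by contradiction, but that difference is purely cosmetic.

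There is, however, a genuine gap at the step where you pass from the individual estimates $\|x_i\|<(\sigma_i^{-1}\circ\alpha_{i1})^{-1}\bigl(\hat\eta_j(\|e_j\|)\bigr)$ to the aggregated bound $\max_{i\neq j}\|x_i\|\le\psi_j^{-1}\bigl(\hat\eta_j(\|e_j\|)\bigr)$. Since $\psi_j=\max_{i\neq j}\sigma_i^{-1}\circ\alpha_{i1}$, its inverse is the \emph{minimum} of the individual inverses,
\[
\psi_j^{-1}=\bigl(\max_{i\neq j}\sigma_i^{-1}\circ\alpha_{i1}\bigr)^{-1}=\min_{i\neq j}\,(\sigma_i^{-1}\circ\alpha_{i1})^{-1},
\]
not the maximum. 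From the componentwise bounds you can only conclude $\max_{i\neq j}\|x_i\|\le\max_{i\neq j}(\sigma_i^{-1}\circ\alpha_{i1})^{-1}\bigl(\hat\eta_j(\|e_j\|)\bigr)$, which in general strictly exceeds $\psi_j^{-1}\bigl(\hat\eta_j(\|e_j\|)\bigr)$; no contradiction with $W\ge\psi_j^{-1}\circ\hat\eta_j(\|e_j\|)$ follows. The paper's direct argument makes the mirror-image move, going from $\hat\eta_j(\|e_j\|)\le\psi_j(\max_{i\neq j}\|x_i\|)$ to $\hat\eta_j(\|e_j\|)\le\max_{i\neq j}\sigma_i^{-1}\circ\alpha_{i1}(\|x_i\|)$, and the same max/min issue is present there; both chains become correct verbatim if $\psi_j$ is taken to be $\min_{i\neq j}\sigma_i^{-1}\circ\alpha_{i1}$ rather than the maximum. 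So your bookkeeping worry was well placed, but it bites at exactly this spot rather than in the invertibility or attainment issues you flagged.
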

\begin{remark}
    Lemma~\ref{lem:theta} presents a way of approximating the norm of the
    other states influencing the dynamics of a single subsystem. To this
    end an approximation of the derivative is used. Another possibility for
    achieving this goal would be to construct an observer, which gives an
    approximation of the inputs (the other states) by observing the
    dynamics of a single subsystem.
\end{remark}
Before we can state another event-triggering scheme, which does not induce
Zeno behavior we have to formulate the observation that if Zeno behavior
occurs, one of the states has to approach the equilibrium.
\begin{lemma}
\label{lem:zeno}
Consider a large scale system with triggered control of the form
\eqref{def:trigsys} satisfying Assumptions~\ref{a1} and \ref{a2}. Let
$\chi_i,\,i\in\mathcal{N}$ be given by \eqref{eq:threestar}.  Consider the
triggering conditions given by
\[
T_i(x_i,e_i)=\chi_i(\|e_i\|)-V_i(x_i)\;.
\]
If for a given initial condition $x_0$ the triggering scheme $T_{i^*}$ induces Zeno behavior, then the corresponding solution
\[x_{i^*}(t_k)\rightarrow0\;.\]
\end{lemma}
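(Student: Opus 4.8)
The plan is to argue by contradiction and extract the essential tension in the triggering mechanism: if events triggered by $T_{i^*}$ accumulate at a finite time $t^*$, then the inter-event times for subsystem $i^*$ must shrink to zero, which forces the error $e_{i^*}$ to recover from $0$ to the threshold level $\chi_{i^*}(\|e_{i^*}\|) = V_{i^*}(x_{i^*})$ in arbitrarily short time; the only way this is possible is if $V_{i^*}(x_{i^*})$, and hence $\|x_{i^*}\|$, becomes arbitrarily small along the subsequence of event times. So first I would fix the initial condition $x_0$ and suppose $t_k \to t^*$ with $t^* < \infty$, where $t_k$ denotes the subsequence of event times actually generated by $T_{i^*}$ (so $e_{i^*}(t_k^+) = 0$ and $T_{i^*}(x_{i^*}(t_k),e_{i^*}(t_k)) = 0$, i.e.\ $V_{i^*}(x_{i^*}(t_k)) = \chi_{i^*}(\|e_{i^*}(t_k)\|)$).

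Second, I would establish a priori bounds on the solution on $[0,t^*)$. By Theorem~\ref{t3a}'s proof (or directly: the triggering condition guarantees $V_i(x_i) \ge \chi_i(\|e_i\|)$ for all $i$ at all times, so Theorem~\ref{t1} applies), $V(x(t))$ is nonincreasing on $[0,t^*)$; since $V$ is radially unbounded this gives a uniform bound $\|x(t)\| \le R$ on $[0,t^*)$. On the compact set $\{\|x\| \le R, \|e\| \le \text{(bound implied by the triggering constraint)}\}$ the map $(x,e) \mapsto f(x,g(x+e))$ is bounded by some constant $M$ by local boundedness of $f$ and $g_i$; in particular $\|\dot x_{i^*}(t)\| \le M$ and hence $\|\dot e_{i^*}(t)\| = \|\dot x_{i^*}(t)\| \le M$ for $t$ in between events. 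Therefore, between the reset at $t_k^+$ (where $e_{i^*} = 0$) and the next event at $t_{k+1}$, we have $\|e_{i^*}(t_{k+1})\| \le M(t_{k+1} - t_k)$.

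Third, I would combine this with the triggering identity. At $t_{k+1}$ we have $V_{i^*}(x_{i^*}(t_{k+1})) = \chi_{i^*}(\|e_{i^*}(t_{k+1})\|) \le \chi_{i^*}\big(M(t_{k+1}-t_k)\big)$ using that $\chi_{i^*} \in \mathcal{K}$ (it is $\sigma_{i^*}\circ\hat\eta_{i^*}$, a composition of class-$\mathcal{K}$ maps, hence in $\mathcal{K}\cup\{0\}$; the case $\chi_{i^*}\equiv 0$ would make $T_{i^*}(x_{i^*},0) = -V_{i^*}(x_{i^*}) < 0$ for $x_{i^*}\ne 0$, precluding accumulation unless $x_{i^*} \equiv 0$, which is the conclusion). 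Since $t_{k+1}-t_k \to 0$ as $k\to\infty$ (consecutive terms of a convergent sequence) and $\chi_{i^*}$ is continuous with $\chi_{i^*}(0)=0$, we get $V_{i^*}(x_{i^*}(t_{k+1})) \to 0$, and then $\alpha_{i^*1}(\|x_{i^*}(t_{k+1})\|) \le V_{i^*}(x_{i^*}(t_{k+1})) \to 0$ with $\alpha_{i^*1}\in\mathcal{K}_\infty$ yields $\|x_{i^*}(t_k)\| \to 0$, which is exactly the claim.

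The main obstacle, and the point that needs care rather than routine estimation, is the uniform bound $M$ on $\|\dot x_{i^*}\|$ over the whole interval $[0,t^*)$: one must confirm that $\hat x(t) = x(t) + e(t)$ stays in a fixed compact set on $[0,t^*)$ so that local boundedness of $g_{i^*}$ actually delivers a constant. This follows because $x(t)$ is bounded (monotonicity of $V$) and $e(t)$ is bounded by the triggering constraint $V_i(x_i) \ge \chi_i(\|e_i\|)$ together with $\chi_i \in \mathcal{K}$ being invertible on its range — so $\|e_i\| \le \chi_i^{-1}(V_i(x_i)) \le \chi_i^{-1}(\alpha_{i2}(R))$ — hence $\hat x$ lives in a fixed compact set and $M := \sup \|f(x,g(x+e))\|$ over that set is finite. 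A secondary subtlety is the degenerate case $\hat\eta_{i^*} \equiv 0$ (equivalently $Z(i^*) = \emptyset$), handled as noted above by observing the triggering function can never vanish away from $x_{i^*} = 0$.
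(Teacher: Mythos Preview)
Your proposal is correct and follows essentially the same route as the paper: bound $x$ via the decreasing Lyapunov function, bound $e$ via the triggering constraint, bound $\dot x_{i^*}$ on the resulting compact set, use the reset $e_{i^*}(t_k^+)=0$ together with the shrinking inter-event times to force $\|e_{i^*}(t_{k+1})\|\to 0$, and finally invoke the triggering identity $V_{i^*}(x_{i^*}(t_{k+1}))=\chi_{i^*}(\|e_{i^*}(t_{k+1})\|)$ to conclude. Your explicit handling of the degenerate case $\chi_{i^*}\equiv 0$ and the care taken to justify the uniform bound on $\hat x$ are useful additions not spelled out in the paper's version.
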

The next lemma provides an inequality for the state and the corresponding
dynamics.  Besides the rather technical nature of Lemma~\ref{lem:xdotx2},
together with Lemma~\ref{lem:zeno} it forms the basis to be able to
compare the $i$th state to the rest of the states as will be seen in
Theorem~\ref{t7b}.
\begin{lemma}
\label{lem:xdotx2}
Consider system
\begin{equation}
\dot x=f(x,g(x+e))
\label{eq:syslem3}
\end{equation}
as in \eqref{def:trigsys}. If there are triggering instances $t_k \to t^*$
for $k\to \infty$ and an index $i$ such that $x_{i}(t_k) \to 0$, then for
all $M>0$ there exists a $k^*\in\N$ such that for some $k\geq k^*$
\[\frac{\|x_i(t_{k+1})-x_i(t_k)\|}{t_{k+1}-t_k}> M\|x_i(t_{k+1})\|.\]
\end{lemma}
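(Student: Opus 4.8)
The plan is to prove the statement by contradiction, reducing it to an elementary telescoping estimate for two real sequences. Put $a_k:=\|x_i(t_k)\|$ and $\delta_k:=t_{k+1}-t_k>0$. Since the trigger times $t_k$ are strictly increasing and converge to the finite value $t^\ast$, we have $\sum_k\delta_k=t^\ast-t_1<\infty$, hence in particular $\delta_k\to 0$; the hypothesis $x_i(t_k)\to 0$ says $a_k\to 0$. Before arguing I would note that only the case $a_k=\|x_i(t_k)\|>0$ for all $k$ is relevant here: along a genuine Zeno sequence of trigger times of subsystem $i$ the state $x_i$ cannot sit at the origin at a trigger instant, by the convention of Remark~\ref{rem:trigg} that a subsystem which has reached $x_i=0$ is not re-triggered while it stays there. (If one insisted on $x_i(t_k)=0$ along the sequence, the inequality to be contradicted below would itself force $x_i$ to remain at $0$ from then on, which that convention excludes for a subsystem that keeps triggering.)

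The negation of the assertion is that there exist $M>0$ and $k^\ast\in\N$ such that
\[
\|x_i(t_{k+1})-x_i(t_k)\|\;\le\;M\,\delta_k\,a_{k+1}\qquad\text{for all }k\ge k^\ast .
\]
By the triangle inequality, $a_k\le \|x_i(t_{k+1})\|+\|x_i(t_{k+1})-x_i(t_k)\|\le(1+M\delta_k)\,a_{k+1}$, i.e.\ $a_{k+1}\ge a_k/(1+M\delta_k)$ for every $k\ge k^\ast$. Iterating from $k^\ast$ and using $\log(1+x)\le x$ together with $\sum_{j\ge k^\ast}\delta_j=t^\ast-t_{k^\ast}$,
\[
\log a_k\;\ge\;\log a_{k^\ast}-\sum_{j=k^\ast}^{k-1}\log(1+M\delta_j)\;\ge\;\log a_{k^\ast}-M\,(t^\ast-t_{k^\ast}),
\]
so that $a_k\ge a_{k^\ast}\,e^{-M(t^\ast-t_{k^\ast})}>0$, a bound independent of $k$. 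This contradicts $a_k\to 0$. Hence for every $M>0$ the reverse strict inequality $\|x_i(t_{k+1})-x_i(t_k)\|/\delta_k> M\,\|x_i(t_{k+1})\|$ must hold for arbitrarily large $k$, which is the claim; a fortiori the same lower bound then holds for the average of $\|f_i(x,g_i(x+e))\|$ over $[t_k,t_{k+1}]$, since that average dominates $\|x_i(t_{k+1})-x_i(t_k)\|/\delta_k$ — this is the form that feeds into Theorem~\ref{t7b}.

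I do not expect a substantial obstacle: the computation uses nothing about the vector field $f$, only summability of the inter-event times and $x_i(t_k)\to 0$, so the ``hard part'' is really bookkeeping. The two points that need care are (i) parsing the (loosely phrased) conclusion so that the contradiction hypothesis is ``for all large $k$'' rather than ``for some $k$'' — it is cleanest to establish the slightly stronger statement that the inequality recurs for arbitrarily large indices — and (ii) discarding the degenerate possibility $x_i(t_k)=0$ along the sequence as indicated above, which is the only place where the triggering set-up (via Remark~\ref{rem:trigg}) enters the argument.
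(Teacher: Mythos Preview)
Your argument is correct and follows the same contradiction strategy as the paper: negate the conclusion to obtain the one-step estimate $\|x_i(t_{k+1})-x_i(t_k)\|\le M\delta_k\,\|x_i(t_{k+1})\|$ for all large $k$, then iterate to produce a uniform positive lower bound on $\|x_i(t_k)\|$, contradicting $x_i(t_k)\to 0$. Your route is in fact more direct than the paper's: where you pass immediately from the triangle inequality to $a_{k+1}\ge a_k/(1+M\delta_k)$ and bound the resulting product by $e^{M(t^\ast-t_{k^\ast})}$, the paper instead telescopes $\|x_i(t_k)-x_i(t_l)\|$, rewrites each term as $\|x_i(t_j)-x_i(t_l)\|+\|x_i(t_l)\|$, and invokes a discrete Gronwall inequality to reach $\|x_i(t_k)-x_i(t_l)\|\le C\|x_i(t_l)\|$ with $C<1$. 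Both arguments rest on the same two ingredients (summability of the inter-event times and the negated inequality) and both implicitly need $\|x_i(t_{k^\ast})\|>0$; you make this explicit via Remark~\ref{rem:trigg}, whereas the paper leaves it unstated.
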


It is of interest to note the following immediate corollary.

\begin{corollary}
    \label{c:noGAS}
    Under the conditions of Lemmas \ref{lem:zeno} and
    \ref{lem:xdotx2}, assume that the functions $\Theta_j$ from
    Lemma~\ref{lem:theta} satisfying \eqref{cond:lem22} may be chosen to
    be Lipschitz and so that $\Theta_i(0,\dots,0)=0$ holds. Consider an
    initial condition $x(0)=x_0 \neq 0$. If there is Zeno behavior at
    $t^*$, i.e. if there are triggering instances $t_k \to t^*$, then for
    the overall state $x$ of \eqref{eq:syslem3}
    \begin{equation}
        \label{eq:noGAS}
        x(t_k) \not\to 0\quad \text{ as } k\to \infty\,.
    \end{equation}
\end{corollary}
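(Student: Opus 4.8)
The plan is to argue by contradiction, combining the three lemmas that precede the corollary. Suppose $x(0)=x_0\neq 0$ but there is Zeno behavior at $t^*$ and $x(t_k)\to 0$ as $k\to\infty$. Since only finitely many subsystems are present, at least one index $i^*$ must trigger infinitely often along the Zeno sequence, so $T_{i^*}$ induces Zeno behavior; by Lemma~\ref{lem:zeno} we then have $x_{i^*}(t_k)\to 0$. (Note that $x(t_k)\to 0$ already gives this, so this step is essentially for free, but it is worth spelling out that the hypotheses of Lemma~\ref{lem:zeno} are met.) Applying Lemma~\ref{lem:xdotx2} with this index $i^*$ then yields, for every $M>0$, arbitrarily large $k$ with
\[
\frac{\|x_{i^*}(t_{k+1})-x_{i^*}(t_k)\|}{t_{k+1}-t_k}>M\,\|x_{i^*}(t_{k+1})\|\;.
\]

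The next step is to convert this growth statement into a contradiction with the boundedness of $\dot x$ along the Zeno sequence. The key observation is that the left-hand side is a difference quotient of $x_{i^*}$, so by the mean value theorem (applied componentwise, or via the integral form $x_{i^*}(t_{k+1})-x_{i^*}(t_k)=\int_{t_k}^{t_{k+1}}\dot x_{i^*}(s)\,ds$) it is bounded above by $\sup_{s\in[t_k,t_{k+1}]}\|\dot x_{i^*}(s)\|=\sup_s\|f_{i^*}(x(s),g_{i^*}(\hat x(s)))\|$. Here is where the hypotheses on $\Theta_{i^*}$ enter: by \eqref{cond:lem22}, whenever the relevant error conditions hold, $\|f_{i^*}(x,g_{i^*}(x+e))\|\le\Theta_{i^*}(\|x_1\|,\dots,\|x_N\|)$, and since $\Theta_{i^*}$ is Lipschitz with $\Theta_{i^*}(0,\dots,0)=0$, this is bounded by $L(\|x_1\|+\dots+\|x_N\|)$ for a Lipschitz constant $L$. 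As $x(t_k)\to 0$ and (from the proof of Theorem~\ref{t3a}/\ref{t5}-type arguments, or directly from $V$ decreasing) $x$ stays bounded and in fact $\|x(s)\|$ is controlled on $[t_k,t_{k+1}]$ by its value at the endpoints for $k$ large, we get $\sup_{s\in[t_k,t_{k+1}]}\|\dot x_{i^*}(s)\|\le L'\,\varepsilon_k$ with $\varepsilon_k\to 0$. On the other hand $\|x_{i^*}(t_{k+1})\|$ is of the same order as $\varepsilon_k$ (or the difference quotient inequality would be vacuous), so choosing $M$ larger than $L'$ times the relevant ratio produces the contradiction.

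The step I expect to be the main obstacle is making the last paragraph's comparison fully rigorous: one has to be careful that the bound $\Theta_{i^*}$ is legitimately applicable on the whole interval $[t_k,t_{k+1}]$ — i.e.\ that the error condition ``$V(x)\ge\hat\eta_i(\|e_i\|)$ for all $i\neq j$'' required in \eqref{cond:lem22} indeed holds there, which should follow because between triggering times the triggering conditions guarantee exactly these inequalities — and that one can uniformly bound $\sup_{s\in[t_k,t_{k+1}]}\|x(s)\|$ in terms of $\|x(t_k)\|$ and $\|x(t_{k+1})\|$ as $k\to\infty$; this uses Lipschitz continuity of the dynamics together with $t_{k+1}-t_k\to 0$ (a consequence of Zeno accumulation) via a Gronwall-type estimate. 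Once these two points are nailed down, the contradiction is immediate: Lemma~\ref{lem:xdotx2} forces the difference quotient to exceed $M\|x_{i^*}(t_{k+1})\|$ for every $M$, while the Lipschitz bound on $\Theta_{i^*}$ together with $x(t_k)\to 0$ forces it to be at most a fixed constant times $\|x_{i^*}(t_{k+1})\|$ (up to lower-order terms), so no such Zeno behavior with $x(t_k)\to 0$ can occur, which is precisely \eqref{eq:noGAS}.
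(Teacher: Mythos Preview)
Your approach has a genuine gap at precisely the point you flag as the ``main obstacle''. From Lemma~\ref{lem:xdotx2} you obtain, for each $M$, some $k$ with
\[
\frac{\|x_{i^*}(t_{k+1})-x_{i^*}(t_k)\|}{t_{k+1}-t_k}>M\,\|x_{i^*}(t_{k+1})\|\,,
\]
and from the Lipschitz bound on $\Theta_{i^*}$ you get the difference quotient bounded above by $L'\sup_{s\in[t_k,t_{k+1}]}\|x(s)\|$. The contradiction you want requires $\sup_s\|x(s)\|\le C\,\|x_{i^*}(t_{k+1})\|$ with a constant $C$ independent of $k$, so that choosing $M>L'C$ finishes. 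But that ``same order'' claim is unjustified: even with the full state $x(t_k)\to 0$, the single component $x_{i^*}$ may decay much faster than the other components, so the ratio $\sup_s\|x(s)\|/\|x_{i^*}(t_{k+1})\|$ need not be bounded. Your parenthetical (``or the difference quotient inequality would be vacuous'') does not rescue this: if $\|x_{i^*}(t_{k+1})\|$ is tiny compared to $\|x(s)\|$, the inequality from Lemma~\ref{lem:xdotx2} is perfectly compatible with the upper bound and yields no contradiction. And since $M$ must be fixed \emph{before} the lemma hands you $k$, you cannot tune $M$ to the ratio at that $k$.

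The paper's proof sidesteps this entirely and is much more direct. It uses the Lipschitz assumption on \emph{all} the $\Theta_i$ simultaneously (not just $\Theta_{i^*}$) together with $\Theta_i(0,\dots,0)=0$ to obtain $\|\dot x_i(t)\|\le L_i\|x(t)\|$ for every $i$, hence $\|\dot x(t)\|\le L\|x(t)\|$ for the full state. A reverse Gronwall estimate then gives $\|x(t)\|\ge e^{-Lt}\|x_0\|>0$ on $[0,t^*)$, so the full state cannot reach zero in finite time; in particular $x(t_k)\not\to 0$. Lemmas~\ref{lem:zeno} and~\ref{lem:xdotx2} are not invoked at all. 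If you want to salvage your route, you would essentially have to prove this Gronwall bound anyway (to control $\sup_s\|x(s)\|$ in terms of $\|x(t_{k+1})\|$), at which point the detour through Lemma~\ref{lem:xdotx2} is unnecessary.
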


\begin{proof}
    We first exclude that there is a $s^*\in [0,t^*)$ such that $x(s^*) =
    0$.  Otherwise choose Lipschitz constants $L_i$ for $\Theta_i$ valid
    on the compact set $\{ x(s) \;;\; s\in [0,s^*]\}$ and note that we
    have for each $i$ almost everywhere on $[0,s^*]$
    \begin{multline}
    \label{eq:noGAS1}
        \|\dot{x}_i(t)\| = \|f_i(x(t),g_i(x(t)+e(t)))\| \\
    \leq \Theta_i(\|x_1(t)\|,\ldots,\|x_N(t)\|) \leq L_i \|x(t)\|\;.
    \end{multline}
    Note that we can use $\Theta_i$ as a bound for the dynamics as in \eqref{cond:lem22}, because the validity of $V_i(x_i)\geq \chi_i(\|e_i\|)$ for all $i$ trivially implies $V(x)\geq \hat{\eta}_i(\|e_i\|)$.\\
    As \eqref{eq:noGAS1} is true for all $i$, this implies
    $\|\dot{x}(t)\| \leq L \|x(t)\|$ for $L$ sufficiently large and almost
    all $t \in [0,s^*]$. It follows that $\|x(s^*)\| \geq e^{-Ls^*}
    \|x(0)\|>0$, so that $x(s^*)\neq 0$.\\
    If $x(t_k)\to 0$, then $x(t) \to 0$ for $t\nearrow t^*$. Hence for
    each $i$, and $k$ sufficiently large we have that \eqref{eq:noGAS1}
    holds almost everywhere on $(t_k,t^*)$. As in the first part of the
    proof it follows that $\|x(t^*)\| \geq e^{-L(t^*-t_k)} \|x(t_k)\|>0$,
    because by the first step of the proof $x(t_k)\neq 0$. This
    contradicts the assumption that $x(t_k)\to 0$.
\end{proof}

The rest of this section is devoted to constructing an event-triggered
control scheme which ensures that Condition~\eqref{cond:iuj} holds.\\
{}From Lemma~\ref{lem:zeno} we know that if Zeno behavior occurs, then one
of the subsystems approaches the origin in finite time. Corollary~\ref{c:noGAS}
shows that under certain regularity assumptions, a number of subsystems do
not converge to $0$ as we approach the Zeno point. Hence, from a
certain time on, the Lyapunov function corresponding to the
subsystem which tends to
 the origin does not contribute to the Lyapunov function for the overall
system. As a consequence no information transfer from this subsystem
is necessary using parsimonious triggering. This
observation is made rigorous in the rest of the section.\\
%
In the next theorem we use the triggering condition as in Theorem~\ref{t3a} but we add another triggering condition $T_{i2}$, which checks whether the $i$th subsystem contributes to the Lyapunov function of the overall system. It does so by comparing the local error of system $i$ with the approximation $W$ of the other states as described in Lemma~\ref{lem:theta}. The main idea is that if the dynamics of the $i$th system is large compared to its own state, other states must be large. As the correct value of the dynamics is not known to system $i$, an approximation of $\|\dot{x}_i\|$ is used.\\
 As the aim is to use only local information, we will use the
    difference quotients to approximate the size of the
    derivative at the triggering points. Furthermore, we do not wish to
    assume that all subsystems are aware of all triggering events. Hence
    in the following we will use the notation $t_{k}^i$ to denote those
    triggering events initiated by system $i$. We define
    \begin{equation}
        \label{eq:djdef}
        d_i(t)=
        \frac{\|x_i(t)-x_i(t_{{k-1}}^i)\|}{t-t_{{k-1}}^i}
    \end{equation}
    as the difference quotient approximating $\|\dot{x}_i(t)\|$ after the triggering event $t_{k-1}^i$.\\
Adding the new triggering condition that uses \eqref{eq:djdef} allows us to exclude the occurrence of Zeno behavior. A discussion about the new triggering condition can be found in Remark~\ref{rem:parsim} and \ref{rem:approx}.
\begin{theorem}
 \label{t7b}
 Consider a large scale system with triggered control of the form
\eqref{def:trigsys} satisfying Assumptions~\ref{a1} and \ref{a2}. Let
 $V(x)=\max_{i\in\mathcal{N}}\sigma_i^{-1}(V_i(x_i))$.  Define
 \[T_{i1}(x_i,e_i)=\chi_i(\|e_i\|)-V_i(x_i)\] with $\chi_i$ as in
 Theorem~\ref{t3a} and
 \[T_{i2}(x_i,e_i,d_i)=\psi_i^{-1}\circ\hat\eta_i(\|e_i\|)-W(i,x_i,d_i)\;,\] where $\psi_i,\,W(i,x_i,d_i)$ and $\hat\eta_i$ are defined as in Lemma~\ref{lem:theta}. Furthermore, assume that for all $i\in\mathcal{N}$ the $\Theta_i$ from Lemma~\ref{lem:theta} and $\psi_i^{-1}\circ\hat\eta_i$ are Lipschitz with Lipschitz constant $L_i$ respectively $K_i$ and that $\Theta_i(0,\dots,0)=0$ holds.\\
 Consider the interconnected system
\be\label{is2} \dot x_i(t)=f_i(x(t), g_i(\hat x(t)))\;,\quad i\in
{\cal N}\;, \ee
as in \eqref{def:trigsys} with triggering conditions given by
\begin{equation}
 T_i(x_i,e_i,d_i)=\min\{T_{i1}(x_i,e_i),T_{i2}(x_i,e_i,d_i)\}\,,
\label{eq:triggcond2}
\end{equation}
for all $i\in\mathcal{N}$. Then the origin is a globally uniformly asymptotically stable
equilibrium for (\ref{is2}), if there are constants $\kappa_j>0,\;j\in {\cal
  N}$ such that at the triggering times $t_k$, which are implicitly
defined by \eqref{is2} and \eqref{eq:triggcond2} as described in
Section~\ref{sec:trig}, the following condition is satisfied:
\begin{equation}
\label{eq:condapprox}
|\|\dot x_j(t_k^j)\|-d_j(t_k^j)|\leq\kappa_j\|x_j(t_k^j)\|
\end{equation}
where $d_j(t_{k_j}^j)$ is defined by \eqref{eq:djdef}. In particular, no Zeno behavior occurs.
\end{theorem}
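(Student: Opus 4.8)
The plan is to prove two statements and combine them: (i) the Lyapunov function $V(x)=\max_{i\in\mathcal N}\sigma_i^{-1}(V_i(x_i))$ decreases along the closed-loop solution, which gives uniform asymptotic stability on the interval where the solution is defined, and (ii) no Zeno behaviour occurs, which makes that interval all of $[0,+\infty)$. Since the no-Zeno argument uses boundedness of the trajectory, I would first run the Lyapunov estimate on $[0,t^\ast)$, where $t^\ast\le+\infty$ is the first accumulation point of the triggering times (so the triggers are discrete on $[0,t^\ast)$), and only then rule out $t^\ast<+\infty$. I also use that $\chi_i=\sigma_i\circ\hat\eta_i$, so $T_{i1}(x_i,e_i)\ge0$ means $\sigma_i^{-1}(V_i(x_i))\le\hat\eta_i(\|e_i\|)$ and $T_{i1}(x_i,e_i)<0$ means $\sigma_i^{-1}(V_i(x_i))>\hat\eta_i(\|e_i\|)$.

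For the Lyapunov estimate: at every $t\in[0,t^\ast)$ outside the discrete set of triggering instants, the rule that system $i$ transmits exactly when $T_i=\min\{T_{i1},T_{i2}\}\ge0$ forces, for each $i$, that $T_{i1}(x_i(t),e_i(t))<0$ or $T_{i2}(x_i(t),e_i(t),d_i(t))<0$. In the first case $\sigma_i^{-1}(V_i(x_i(t)))>\hat\eta_i(\|e_i(t)\|)$, hence $V(x(t))\ge\sigma_i^{-1}(V_i(x_i(t)))>\hat\eta_i(\|e_i(t)\|)$. Otherwise $T_{i2}(x_i(t),e_i(t),d_i(t))<0$ while $T_{i1}(x_i(t),e_i(t))\ge0$, i.e.\ $\sigma_i^{-1}(V_i(x_i(t)))\le\hat\eta_i(\|e_i(t)\|)$ and $W(i,x_i(t),d_i(t))>\psi_i^{-1}\circ\hat\eta_i(\|e_i(t)\|)$; these are precisely conditions \eqref{cond:lem2}, so Lemma~\ref{lem:theta} — for which \eqref{eq:condapprox} supplies the hypothesis $\big|\,\|\dot x_i\|-d_i\,\big|\le\kappa_i\|x_i\|$ — again gives $V(x(t))\ge\hat\eta_i(\|e_i(t)\|)$. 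Thus condition \eqref{cond:iuj} holds and Theorem~\ref{t2b} provides a positive definite $\alpha$ with $\langle p,f(x(t),g(x(t)+e(t)))\rangle\le-\alpha(\|x(t)\|)$ for all $p\in\partial V(x(t))$. Proceeding as in the proof of Theorem~\ref{t3a} (Carath\'eodory solution, Clarke gradient, \cite{tesi.ceragioli}) one obtains $V(x(t''))-V(x(t'))\le-\int_{t'}^{t''}\alpha(\|x(s)\|)\,ds$ on $[0,t^\ast)$, so $V\circ x$ is non-increasing there, $x(\cdot)$ is bounded, and — a by-product used below — $V(x(t))\ge\hat\eta_i(\|e_i(t)\|)$ for all $i$ and all $t$, so $\Theta_i$ bounds $\|\dot x_i\|$ along the trajectory by \eqref{cond:lem22}.

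For the no-Zeno part, suppose $t^\ast<+\infty$; we may take $x_0\neq0$ (the case $x_0=0$ being trivial by the zero-state convention of Remark~\ref{rem:trigg}). Boundedness lets us extend $x$ continuously to $t^\ast$, with $\|\dot x\|$ bounded near $t^\ast$. Some subsystem $i^\ast$ triggers infinitely often; each such trigger needs $T_{i^\ast1}\ge0$, i.e.\ $V_{i^\ast}(x_{i^\ast})\le\chi_{i^\ast}(\|e_{i^\ast}\|)$, and between consecutive $i^\ast$-triggers $\|e_{i^\ast}\|\le(\text{bound on }\|\dot x_{i^\ast}\|)\cdot(\text{inter-event time})\to0$, so arguing as in Lemma~\ref{lem:zeno} we get $x_{i^\ast}(t^{i^\ast}_k)\to0$; since $\Theta_{i^\ast}$ is Lipschitz with $\Theta_{i^\ast}(0,\dots,0)=0$, Corollary~\ref{c:noGAS} yields $x(t^\ast)\neq0$. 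Now apply Lemma~\ref{lem:xdotx2} to $i^\ast$ along its own triggering subsequence: for any $M$ there are consecutive $i^\ast$-triggers $t^{i^\ast}_m<t^{i^\ast}_{m+1}$ — which, as the triggers accumulate, may be taken as late, hence with $t^{i^\ast}_{m+1}-t^{i^\ast}_m$ as small, as we please — with $d_{i^\ast}(t^{i^\ast}_{m+1})>M\|x_{i^\ast}(t^{i^\ast}_{m+1})\|$. Choose $M\ge\sqrt N\,L_{i^\ast}+\kappa_{i^\ast}$. From $\Theta_{i^\ast}(y)\le L_{i^\ast}\|y\|\le\sqrt N\,L_{i^\ast}\max_j|y_j|$ and $\Theta_{i^\ast}(0,\dots,0)=0$, every $\xi$ feasible in the definition of $W$ satisfies $\max_{j\neq i^\ast}\|\xi_j\|\ge\big(d_{i^\ast}-\kappa_{i^\ast}\|x_{i^\ast}\|\big)/(\sqrt N\,L_{i^\ast})$, hence $W(i^\ast,x_{i^\ast}(t^{i^\ast}_{m+1}),d_{i^\ast}(t^{i^\ast}_{m+1}))\ge d_{i^\ast}(t^{i^\ast}_{m+1})\big(1-\kappa_{i^\ast}/M\big)/(\sqrt N\,L_{i^\ast})$. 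On the other hand $\|e_{i^\ast}(t^{i^\ast}_{m+1})\|=d_{i^\ast}(t^{i^\ast}_{m+1})(t^{i^\ast}_{m+1}-t^{i^\ast}_m)$ and, $\psi_{i^\ast}^{-1}\circ\hat\eta_{i^\ast}$ being Lipschitz with constant $K_{i^\ast}$ and zero at $0$, $\psi_{i^\ast}^{-1}\circ\hat\eta_{i^\ast}(\|e_{i^\ast}(t^{i^\ast}_{m+1})\|)\le K_{i^\ast}d_{i^\ast}(t^{i^\ast}_{m+1})(t^{i^\ast}_{m+1}-t^{i^\ast}_m)$; once the inter-event time falls below $\big(1-\kappa_{i^\ast}/M\big)/(\sqrt N\,L_{i^\ast}K_{i^\ast})$ this is strictly smaller than the lower bound on $W$, i.e.\ $T_{i^\ast2}(t^{i^\ast}_{m+1})<0$. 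Then $T_{i^\ast}=\min\{T_{i^\ast1},T_{i^\ast2}\}<0$ at $t^{i^\ast}_{m+1}$, contradicting that it is an $i^\ast$-triggering time. Hence $t^\ast=+\infty$, no Zeno occurs, and together with the Lyapunov estimate a standard argument (\cite{bacciotti.rosier.book}) gives global uniform asymptotic stability of the origin.

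The hard part is the no-Zeno argument, and within it the last quantitative comparison: one must combine that $x_{i^\ast}$ tends to the origin along its own triggers (Lemma~\ref{lem:zeno}), that the full state does not (Corollary~\ref{c:noGAS}), and that the difference quotient $d_{i^\ast}$ must then exceed any multiple of $\|x_{i^\ast}\|$ (Lemma~\ref{lem:xdotx2}), and then balance the Lipschitz lower bound on $W$ against the Lipschitz upper bound on $\psi_{i^\ast}^{-1}\circ\hat\eta_{i^\ast}(\|e_{i^\ast}\|)$, exploiting that the inter-event time is eventually small. A subsidiary point is that Lemma~\ref{lem:theta} is invoked at arbitrary times in the Lyapunov estimate, so \eqref{eq:condapprox} must be available there (e.g.\ via continuity of $t\mapsto d_i(t)$) whenever $T_{i2}$ is the binding constraint, not only at triggering instants; the degenerate cases $d_{i^\ast}=0$ or $x_{i^\ast}=0$ are disposed of by the zero-state convention.
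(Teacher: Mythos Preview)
Your proof is correct and follows essentially the same route as the paper: establish the Lyapunov decrease via the case split $T_{i1}<0$ (direct) versus $T_{i2}<0$ (Lemma~\ref{lem:theta}) feeding into Theorem~\ref{t2b} and the argument of Theorem~\ref{t3a}, then rule out Zeno by combining Lemmas~\ref{lem:zeno} and~\ref{lem:xdotx2} with the Lipschitz bounds on $\Theta_{i^\ast}$ and $\psi_{i^\ast}^{-1}\circ\hat\eta_{i^\ast}$ to force $T_{i^\ast2}<0$ at a putative triggering time. Your use of Theorem~\ref{t2b} uniformly (rather than the paper's split between Theorems~\ref{t1} and~\ref{t2b} according to the index sets $I,J,\mathcal J$) is a minor streamlining, and you rightly flag that \eqref{eq:condapprox} is needed at non-triggering times whenever Lemma~\ref{lem:theta} is invoked---a subtlety the paper leaves implicit.
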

\begin{proof}
 Before we can use Theorem~\ref{t3a} respectively Theorem~\ref{t2b} to conclude stability, we have to exclude the occurrence of Zeno behavior.
First note that condition \eqref{eq:triggcond2} triggers an event if and only if $T_{i1}\geq 0$ and $T_{i2}\geq 0$ respectively condition \eqref{str} and \eqref{cond:iuj} are violated.
Now assume that the $j$th subsystem induces Zeno behavior.
For simplicity, we omit the index $j$ of the triggering times $t_k^j$.
 Hence, let $t_k$ the triggering times of the $j$th subsystem and $t^*=\lim_{k\rightarrow\infty}t_k$ the finite accumulation point.
{}From Lemma~\ref{lem:zeno} we know  that the $j$th subsystem has to approach the equilibrium, i.e.
$\lim_{t_k\rightarrow t^*}x_j(t_k)=0$.
Lemma~\ref{lem:xdotx2} tells us that for all $M$ there exists a $k^*$ such that
for some $ k\geq k^*$
\begin{equation}
\label{eq:dotbound}
\frac{\|x_j(t_{k})-x_j(t_{k-1})\|}{t_{k}-t_{k-1}}> M\|x_j(t_k)\|\;.
\end{equation}
 As discussed in the proof of Lemma~\ref{lem:theta}, the full state $x\in A\subset\mathcal{A}$. But the knowledge of $x$ is not available to a single subsystem. Hence, we take $\xi(t_k)\in\mathcal{A}(j,x_j(t_k),d_j(t_k))$ as in the definition of $W$ as an approximation for the states of the other subsystems.
 For this $\xi$ we can deduce together with the Lipschitz continuity of $\Theta_j $ and
\eqref{eq:dotbound}
\begin{multline}
\label{eq:lip}
L_j\max_{i\neq j}\{\|\xi_i(t_k)\|,\|x_j(t_k)\|\}\geq
\Theta_j(\|\xi_1\|,\dots,\|x_j\|,\dots,\|\xi_N\|)\geq\\
\frac{\|x_j(t_k)-x_j(t_{k-1})\|}{t_k-t_{k-1}}-\kappa_j\|x_j(t_k)\|\geq(M-\kappa_j)\|x_j(t_k)\|\;.
\end{multline}
And hence for the $k$ given in \eqref{eq:dotbound}
\begin{equation}
\label{eq:xixj}
\max_{i\neq j}\{\|\xi_i(t_k)\|,\|x_j(t_k)\|\}>
\frac{M-\kappa_j}{L_j}\|x_j(t_k)\|\;.
\end{equation}
Now choose \begin{equation}
\label{eq:mchoice}
M>\max\{\kappa_j+L_j,\kappa_j+L_jK_j\}\;,
\end{equation}
 where $K_j$ is the Lipschitz constant of $\psi_j^{-1}\circ\hat{\eta}_j$.
{}From Lemma~\ref{lem:xdotx2} we know that this choice of $M$ yields a $k^*$ such that we can conclude together with \eqref{eq:xixj} $\max_{i\neq j}\{\|\xi_i(t_k)\|,\|x_j(t_k)\|\}=\max_{i\neq j}\|\xi_i(t_k)\|$ for some $k\geq k^*$. For this $k$ we want to show that the corresponding $t_k$ is not a triggering time.\\
To this end we use \eqref{eq:lip} and \eqref{eq:dotbound} to get
\begin{equation}
\label{eq:xidiff}
\max_{i\neq j}\|\xi_i(t_k)\|\geq\frac{1}{L_j}(1-\frac{\kappa_j}{M})\frac{\|x_j(t_k)-x_j(t_{k-1})\|}{t_k-t_{k-1}}\;.
\end{equation}
Note that for the $j$th subsystem \eqref{eq:xidiff} is true for all $\xi\in\mathcal{A}$ and therefore by the definition of $W$
\[W(j,x_j,d_j)\geq\frac{1}{L_j}(1-\frac{\kappa_j}{M})\frac{\|x_j(t_k)-x_j(t_{k-1})\|}{t_k-t_{k-1}}\;. \]
Using the latter inequality and the Lipschitz
constant for $\psi_j^{-1}\circ\hat{\eta}_j$ we can bound $T_{j2}$ by
\[T_{j2}\leq K_j\|e_j(t_k)\|-\frac{1}{L_j}(1-\frac{\kappa_j}{M})\frac{\|x_j(t_k)-x_j(t_{k-1})\|}{t_k-t_{k-1}}\,.\]
{}From the definition of $e_j(t_k)=x_j(t_{k-1})-x_j(t_k)$ we arrive at
\begin{equation*}
T_{j2}\leq
K_j\|x_j(t_k)-x_j(t_{k-1})\|-\frac{1}{L_j}(1-\frac{\kappa_j}{M})
\frac{\|x_j(t_k)-x_j(t_{k-1})\|}{t_k-t_{k-1}}\,.
\end{equation*}

We may assume that $k^*$ is sufficiently large so that
$t_k-t_{k-1}<M^{-1}$ for all $k\geq k^*$. Together with \eqref{eq:mchoice}
we obtain
\begin{equation*}
    K_j<\frac{1}{L_j(t_k-t_{k-1})}(1-\frac{\kappa_j}{M})
\end{equation*}
and hence $T_{j2}<0$ in contradiction to the assumption that $t_k$ is a
triggering time. Because the only further assumption on the solution of
\eqref{is2} and \eqref{eq:triggcond2} is the occurrence of Zeno behavior,
the aforementioned contradiction shows that Zeno behavior cannot occur.\\
To conclude stability define
\[I(x,e):=\{j\in\mathcal{N}\;:\;V_j(x_j)\geq\chi_j(\|e_j\|)\}\,,\]
\[J(x,e):=\{j\in\mathcal{N}\;:\;V(x)\geq\hat\eta_j(\|e_j\|)\}\,,\]
and
\[\mathcal{J}(x,e):=\{j\in\mathcal{N}\;:\;\psi_j(W(j,x_j,d_j))\geq\hat\eta_j(\|e_j\|)\}\;.\]
Note that the triggering condition $T_j$ ensures that $j\in I \cup \mathcal{J}$.
For $j\in I$ we can use exactly the same reasoning as in Theorem~\ref{t3a}.\\
Lemma~\ref{lem:theta} tells us that from $j\not\in I$ and $j\in\mathcal{J}$ we can deduce $j\in J$. For the case $j\in J$ we can adopt nearly the same reasoning as in Theorem~\ref{t3a}. Only the reasoning for the existence of a Lyapunov function for the overall system changes. In Theorem~\ref{t3a} it can be deduced from Theorem~\ref{t1} whereas here we have to use Theorem~\ref{t2b} to conclude the existence of a Lyapunov function. The rest of the proof  can be copied word by word from Theorem~\ref{t3a}. This ends the proof.
\end{proof}
\begin{remark}
\label{rem:parsim}
 The advantage of parsimonious triggering is twofold.
 First it allows us to exclude the occurrence of Zeno behavior and second it
 may lead to fewer transmissions compared to the triggering condition given in Theorem~\ref{t3a}.
 Compared to Theorem~\ref{t5} where the same goal is achieved by the notion of practical stability, here we still achieve asymptotic stability, but we have to place more technical assumptions on the involved class $\K$ estimates.\\
Note that the set of indices $j\in\mathcal{N}$ for which condition \eqref{str} holds is a subset of those for which \eqref{cond:iuj} holds (in other words $I(x,e)\subset J(x,e)$). But we cannot check condition \eqref{cond:iuj} locally.\\
 Because of the conservatism we introduce by using $T_{i2}$ instead of \eqref{cond:iuj}, triggering condition $T_i$ still makes sense.\\
In a practical implementation $T_{i1}$ should be checked first, before $T_{i2}$ is checked, because of the conservatism of $T_{i2}$ and the possible cumbersome calculation of $W$.
\end{remark}
\begin{remark}
\label{rem:approx}
One possible drawback of the triggering condition given in Theorem~\ref{t7b} is that the condition on the approximation $d_j$ as in \eqref{eq:condapprox} might be too demanding.
First note, that if $t_k-t_{k-1}$ is sufficiently small, \eqref{eq:condapprox} trivially holds true, because $d_j$ is the difference quotient. As $x_i$ approaches zero, it could happen that the difference $t_k-t_{k-1}$ does not decline fast enough to ensure that \eqref{eq:condapprox} holds.
A way to overcome this issue would be to adjust condition $T_{i2}$ in such a way that it always tries to trigger an event as soon as it cannot be guaranteed that the approximation $d_j$ satisfies \eqref{eq:condapprox}.
\end{remark}
\section{Conclusion}
We presented event-triggered sampling schemes for controlling
interconnected systems. Each system in the interconnection decides
when to send new information across the network independently of
the other systems. This decision is based only on each system's
own state
and a given Lyapunov function. Stability of the interconnected
system is inferred by the application of a nonlinear small-gain
condition. The feasibility of our approach is presented with the
help of numerical simulations. To prevent the accumulation of the
sampling times in finite time, we propose two variations of the
event-triggered sampling-scheme. The first is based on the notion
of input-to-state practical stability, whereas the second compares the local error to an approximation of the Lyapunov function of the overall system to guarantee stability of the interconnected system.

\appendix
\section{Appendix}
\subsection{Proof of Lemma~\ref{lem:theta}}
\begin{proof}
For later use define
\begin{multline}
 \label{eq:setA}
A(j,x_j,e_j,\dot x_j)=\{\xi^{j,x_j}\;:\;\exists\;\epsilon\in\R^{n_i}\text{ s.t. }f_j(\xi^{j,x_j},g_j(\xi^{j,x_j}+\epsilon^{j,e_j}))=\dot x_j\text{ and }\\ V(\xi^{j,x_j})\geq\hat\eta_i(\|\epsilon_i\|)\;\forall i\neq j\}\;.
\end{multline}
The set $A(j,x_j,e_j,\dot x_j)$ describes the set of all  $\xi^{j,x_j}$ for which a pair $(\xi^{j,x_j},\epsilon^{j,e_j})$ exists that fulfills the right hand side of the $j$th subsystem for a given $\dot x_j,x_j,e_j$ and for which $V(\xi^{j,x_j})\geq\hat\eta_j(\|\epsilon_i\|)$ for all $i\neq j$ hold. As the system's state satisfies the dynamics, it holds that $x\in A$.\\
Before we proceed, we want to show that $A(j,x_j,e_j,\dot
x_j)\subset\mathcal{A}(j,x_j,d_j)$.  To this end take a $\xi\in
A(j,x_j,e_j,\dot x_j)$. Hence we have
$f_j(\xi^{j,x_j},g_j(\xi^{j,x_j}+\epsilon^{j,e_j}))=\dot x_j$. Taking the
norm and using \eqref{cond:lem22} yields
\begin{align*}
    \Theta_j(\|\xi_1\|,\dots,\|\xi_n\|)\geq\|f_j(\xi^{j,x_j},g_j(\xi^{j,x_j}+\epsilon^{j,x_j}))\| =\|\dot x_j\|\geq d_j-\tilde\kappa_j\|x_j\|\,,
\end{align*}
where the last inequality follows from the condition on the approximation for $\|\dot x_j\|$. And we can conclude $A(j,x_j,e_j,\dot x_j)\subset\mathcal{A}(j,x_j,d_j)$.\\
{}From condition \eqref{cond:lem2} we can deduce
\begin{multline}
\label{eq:lem21}
\psi_j^{-1}\circ\hat\eta(\|e_j\|)\leq W(j,x_j,d_j)=
\min\{\max_{i\neq j}\|\xi_i\|\;:\;\xi\in \mathcal{A}(j,x_j,d_j)\}\leq\\
\min\{\max_{i\neq j}\|\xi_i\|\;:\;\xi\in A(j,x_j,e_j,\dot x_j)\}\leq\max_{i\neq j}\|x_i\|\;.
\end{multline}
The second inequality follows from $A(j,x_j,e_j,\dot x_j)\subset\mathcal{A}(j,x_j,d_j)$ and the last can be deduced from $x\in A$.
Now we can rewrite \eqref{eq:lem21} to get
\[\hat\eta_j(\|e_j\|)\leq\psi_j(\max_{i\neq j}\|x_i\|)\;.\]
With the help of \eqref{cond:lem2} and the definition of $\psi_j$ we arrive at
\begin{equation*}
    \sigma_j^{-1}(V_j(x_j))\leq\hat\eta_j(\|e_j\|)\leq\max_{i\neq j}\sigma_i^{-1}\circ\alpha_{1i}(\|x_i\|) \leq\max_{i\neq j}\sigma_i^{-1}(V_i(x_i))\,,
\end{equation*}
where the last inequality follows from Assumption~\ref{a1}.  Considering
the first and the last term in the chain of inequalities above it is easy
to see that the $j$th subsystem does not contribute to the Lyapunov
function of the overall system and we conclude $\max_{i\neq
  j}\sigma_i^{-1}(V_i(x_i))=\max_{i\in\mathcal{N}}\sigma_i^{-1}(V_i(x_i))=V(x)$
and the proof is complete.
\end{proof}

\subsection{Proof of Lemma~\ref{lem:zeno}}
\begin{proof}
 Denote $t^*=\lim_{k\rightarrow\infty}t_k$. By definition of the triggering condition we have for each $k$ an index $i(k)\in\mathcal{N}$ such that
 \[V_{i(k)}( x_{i(k)}(t_k))=\chi_{i(k)}(\|e_{i(k)}(t_k)\|)\;.\] Choose
 $i^*\in\mathcal{N}$ such that $i(k)=i^*$ for infinitely many $k$. Such a
 $i^*$ exists because $\mathcal{N}$ is finite and $k$ ranges over all of
 $\N$. Let $K$ be the set of indices for which $i(k)=i^*$. For ease of
 notation let $K=\{s_1,s_2,\dots\}$. By Theorem~\ref{t1} $V$ is a Lyapunov
 function for the event triggered system on the interval $[0,t^*)$.  Thus
 the trajectory $x_{|[0,t^*)}$ is bounded and $e_{|[0,t^*)}$ is bounded
 because $\chi_i(\|e_i(t)\|)\leq V_i(x_i(t))$ for all
 $i\in\mathcal{N},\,t\in[0,t^*)$.
 It follows that $u_{i|[0,t^*))}$ is bounded and so $\dot x_i$ is bounded on $[0,t^*)$ for all $i\in\mathcal{N}$. \\
 Then we have by uniform continuity of $x_{i^*}$ on $[0,t^*)$ that the
 following limit exists
\begin{equation}
 \label{eq:limit}
\lim_{k\rightarrow\infty}\chi_{{i^*}}(\|e_{i^*}(s_k)\|)=\lim_{k\rightarrow\infty}V_{i^*}(x_{i^*}(s_k))=V_{i^*}(x_{i^*}(t^*)).
\end{equation}
By definition $e_{i^*}(s_k^+)=0$. By \eqref{def:trigsys} we have that $\dot e_{i^*}=-\dot x_{i^*}$ almost everywhere on $(s_k,s_{k+1})$. Since $\dot x_{i^*}$ is bounded and $s_{k+1}-s_{k}\rightarrow 0$,
then condition $e_{i^*}(s_k^+)=0$ implies that
\[e_{i^*}(s_{k+1})=e_{i^*}(s_k^+)+\int_{s_k}^{s_{k+1}}\dot e_{i^*}(\tau)d\tau=\int_{s_k}^{s_{k+1}}\dot e_{i^*}(\tau)d\tau\]
which goes to $0$ for $k\rightarrow\infty$.
Hence by \eqref{eq:limit} we obtain that $V_{i^*}(x_{i^*}(t^*))=0$. This shows the assertion.
\end{proof}

\subsection{Proof of Lemma~\ref{lem:xdotx2}}
\begin{proof}
    The proof will be by contradiction.  To this end assume that for some
    fixed $M>0$ and all $k$ sufficiently large we have
\begin{equation}
\label{eq:contra3}
\|x_i(t_{k+1})-x_i(t_k)\|\leq M(t_{k+1}-t_k)\|x_i(t_{k+1})\|\;.
\end{equation}
The evolution of $x_i$ between $t_l$ and $t_k$ for $k>l$ can be bounded by
using a telescoping sum, the triangle inequality, applying
\eqref{eq:contra3}, and a judicious addition of $0$:
\begin{multline*}
\|x_i(t_{k})-x_i(t_l)\|\leq\\\sum\limits_{j=l+1}^{k-1}M(t_{j}-t_{j-1})\|x_i(t_j)-x_i(t_l)\|+M(t_k-t_l)\|x_i(t_l)\|+M(t_k-t_{k-1})\|x_i(t_k)-x_i(t_l)\|\;.
\end{multline*}
If we choose $D>0$ and a $k'$ such that $0\leq\frac{1}{1-M(t_k-t_{k-1})}\leq D$ for all $k>k'$, we can rewrite the latter to
\begin{multline*}
\|x_i(t_{k})-x_i(t_l)\|\leq
\frac{1}{1-M(t_k-t_{k-1})}\sum\limits_{j=l+1}^{k-1}M(t_{j}-t_{j-1})\|x_i(t_j)-x_i(t_l)\|+\\\frac{M(t_k-t_l)}{1-M(t_k-t_{k-1})}\|x_i(t_l)\|\;.
\end{multline*}
Using the discrete Gronwall inequality (see e.g., Theorem 4.1.1 from \cite{agarwal2000}) yields
\begin{multline*}
    \|x_i(t_{k})-x_i(t_l)\|\leq
    \frac{M(t_k-t_l)}{1-M(t_k-t_{k-1})}\|x_i(t_l)\|+\\\frac{1}{1-M(t_k-t_{k-1})}\sum\limits_{j=l+1}^{k-1}\frac{M(t_j-t_l)}{1-M(t_j-t_{j-1})}\|x_i(t_l)\|\;    M(t_{j}-t_{j-1})
    \prod\limits_{s=j+1}^{k-1}(1+\frac{M(t_{s}-t_{s-1})}{1-M(t_s-t_{s-1})})\;.
\end{multline*}
Exploiting that $t_k$ is a monotone sequence, that  $0\leq\frac{1}{1-M(t_k-t_{k-1})}\leq D$ and that $1+x\leq e^x$ for all $x\in\R$
and collapsing the telescoping sum again gives
\begin{multline*}
    \|x_i(t_{k})-x_i(t_l)\|\leq \underbrace{(MD(t_{k}-t_l)+M^2D^2(t_{k-1}-t_l)^2e^{MD(t_{k-1}-t_l)})}_{:=C}\|x_i(t_l)\|\,.
\end{multline*}
Because of the finite accumulation point $t^*$, there exists an $k^*>k'$
such that
\[\|x_i(t_{k})-x_i(t_l)\|\leq C\|x_i(t_l)\|\]
for all $k\geq l\geq k^*$ with $C<1$. Realizing that this contradicts $\|x_i(t^*)\|=0$ finishes the proof.
\end{proof}

\bibliographystyle{plain}
\bibliography{event}

\begin{thebibliography}{10}

\bibitem{agarwal2000}
Ravi~P. Agarwal.
\newblock {\em Difference equations and inequalities: theory, methods, and
  applications}, volume 228 of {\em Pure and applied mathematics}.
\newblock Dekker, New York, 2., rev. and expanded edition, 2000.

\bibitem{ames.et.al}
A.~D. Ames, H.~Zheng, R.~D. Gregg, and S.~Sastry.
\newblock {Is There Life after Zeno? Taking Executions Past the Breaking (Zeno)
  Point}.
\newblock {\em {\rm In Proceedings of the American Control Conference,
  Minneapolis, MN}}, pages 2652--2657, 2006.

\bibitem{anta2009sample}
A.~Anta and P.~Tabuada.
\newblock {To sample or not to sample: Self-triggered control for nonlinear
  systems}.
\newblock {\em {IEEE} Trans. Autom. Control}, 55(9):2030 -- 2042, 2010.

\bibitem{bacciotti.rosier.book}
A.~Bacciotti and L.~Rosier.
\newblock {\em {Liapunov functions and stability in control theory}}.
\newblock Springer Verlag, London, 2005.

\bibitem{tesi.ceragioli}
F.~Ceragioli.
\newblock {\em Discontinuous Ordinary Differential Equations and
  Stabilization}.
\newblock PhD thesis, Department of Mathematics, Universit\`a di Firenze, 1999.

\bibitem{fah.cocv99}
N.~Chung Siong~Fah.
\newblock Input-to-state stability with respect to measurement disturbances for
  one-dimensional systems.
\newblock {\em ESAIM, Control Optim. Calc. Var.}, (4):99--121, 1999.

\bibitem{dashkovskiy2007iss}
S.~Dashkovskiy, B.S. R{\"u}ffer, and F.R. Wirth.
\newblock {An ISS small gain theorem for general networks}.
\newblock {\em Math Control Signal Syst}, 19(2):93--122, 2007.

\bibitem{drw}
S.N. Dashkovskiy, B.S. R{\"u}ffer, and F.R. Wirth.
\newblock {Small gain theorems for large scale systems and construction of ISS
  Lyapunov functions}.
\newblock {\em SIAM J. Control Optim.}, 48(6):4089--4118, 2010.

\bibitem{dim.johan.cdc09}
D.V. Dimarogonas and K.H. Johansson.
\newblock {Event-triggered control for multi-agent systems}.
\newblock {\em {\rm In Proceedings of the IEEE Conference on Decision and
  Control, Shanghai, China}}, pages 7131--7136, 2009.

\bibitem{donkers.heemels.ifac11}
M.C.F. Donkers and W.P.M.H. Heemels.
\newblock {Output-based event-triggered control with guaranteed ${\cal
  L}_2$-gain and Improved Event-triggering}.
\newblock {\em {\rm In Proceedings of the 49th IEEE Conference on Decision and
  Control, Atlanta, GA}}, pages 3246--3251, 2010.

\bibitem{RG-RS-AT:09}
R.~Goebel, R.~Sanfelice, and A.R. Teel.
\newblock {Hybrid dynamical systems}.
\newblock {\em {IEEE} Control Systems Magazine}, 29(2):28--83, 2009.

\bibitem{ito.et.al.cdc09}
H.~Ito, S.~Dashkovskiy, and F.~Wirth.
\newblock On a small gain theorem for networks of i{ISS} systems.
\newblock In {\em Proceedings of the IEEE Conference on Decision and Control,
  Shanghai, China}, pages 4210--4215, 2009.

\bibitem{jiang_small-gain_1994}
Z.~P. Jiang, A.~R. Teel, and L.~Praly.
\newblock Small-gain theorem for {ISS} systems and applications.
\newblock {\em Math Control Signal Syst}, 7(2):95--120, 1994.

\bibitem{jiang.et.al.aut96}
Z.P. Jiang, I.M.Y. Mareels, and Y.~Wang.
\newblock {A Lyapunov formulation of the nonlinear small-gain theorem for
  interconnected ISS systems}.
\newblock {\em Automatica}, 32(8):1211--1215, 1996.

\bibitem{krstic1995nonlinear}
M.~Krstic, I.~Kanellakopoulos, and P.V. Kokotovic.
\newblock {\em {Nonlinear and adaptive control design}}.
\newblock John Wiley \& Sons, New York, 1995.

\bibitem{liberzon2002universal}
D.~Liberzon, E.D. Sontag, and Y.~Wang.
\newblock {Universal construction of feedback laws achieving ISS and
  integral-ISS disturbance attenuation}.
\newblock {\em Systems \& Control Letters}, 46(2):111--127, 2002.

\bibitem{liu.et.al.nolcos2010}
T.~Liu, Z.-P. Jiang, and D.J. Hill.
\newblock {Lyapunov-ISS cyclic-small-gain in hybrid dynamical networks}.
\newblock {\em {\rm In Proceedings of the 8th IFAC Symposium on Nonlinear
  Control Systems, Bologna, Italy}}, pages 813--818, 2010.

\bibitem{mazo.tabuada.arxiv2010}
M.~Mazo and P.~Tabuada.
\newblock {Decentralized event-triggered control over wireless sensor/actuator
  networks}.
\newblock {\em {\rm ArXiv:1004.0477v1}}, 2010.
\newblock accepted, IEEE Trans. Autom. Control.

\bibitem{mazo2010iss}
M.~Mazo~Jr, A.~Anta, and P.~Tabuada.
\newblock {An ISS self-triggered implementation of linear controllers}.
\newblock {\em Automatica}, 46(8):1310--1314, 2010.

\bibitem{nesic.teel.tac04}
D.~Ne\v{s}i\'{c} and A.R. Teel.
\newblock Input-output stability properties of networked control systems.
\newblock {\em {IEEE} Trans. Autom. Control}, 52(12):2282--2297, 2007.

\bibitem{sanfelice.et.al.tac07}
R.G. Sanfelice, R.~Goebel, and A.R. Teel.
\newblock {Invariance principles for hybrid systems with connections to
  detectability and asymptotic stability}.
\newblock {\em {IEEE} Trans. Autom. Control}, 52(12):2282--2297, 2007.

\bibitem{Siljak1978large}
D.D. {\v{S}}iljak.
\newblock {\em {Large-scale dynamic systems: stability and structure}}.
\newblock North Holland, New York - Amsterdam - Oxford, 1978.

\bibitem{sontag_smooth_1989}
{E.D.} Sontag.
\newblock Smooth stabilization implies coprime factorization.
\newblock {\em {IEEE} Trans. Autom. Control}, 34(4):435--443, 1989.

\bibitem{sontag.wang.scl95}
E.D. Sontag and Y.~Wang.
\newblock {On characterizations of the input-to-state stability property}.
\newblock {\em Syst. Control Lett.}, 24(5):351--359, 1995.

\bibitem{tabuada.tac07}
P.~Tabuada.
\newblock {Event-triggered real-time scheduling of stabilizing control tasks}.
\newblock {\em {IEEE} Trans. Autom. Control}, 52(9):1680--1685, 2007.

\bibitem{teel.tac96}
A.~Teel.
\newblock {A nonlinear small gain theorem for the analysis of control systems
  with saturation}.
\newblock {\em {IEEE} Trans. Autom. Control}, 41(9):1256--1270, 1996.

\bibitem{tiwari.et.al}
S.~Tiwari, Y.~Wang, and Z.-P. Jiang.
\newblock {A nonlinear small-gain theorem for large-scale time delay systems}.
\newblock {\em {\rm In Proceedings of the Joint 48th IEEE Conference on
  Decision and Control and 28th Chinese Control Control, Shanghai, China}},
  pages 7204 -- 7209, 2009.

\bibitem{vidyasagar1981}
M.~Vidyasagar.
\newblock {\em {Input-output analysis of large-scale interconnected systems}}.
\newblock Springer-Verlag, Berlin-Heidelberg-New York, 1981.

\bibitem{wang2009event}
X.~Wang and M.~Lemmon.
\newblock Event-triggering in distributed networked systems with data dropouts
  and delays.
\newblock In R.~Majumdar and P.~Tabuada, editors, {\em Hybrid Systems:
  Computation and Control}, volume 5469 of {\em LNCS}, pages 366--380.
  Springer-Verlag, Berlin-Heidelberg, 2009.

\bibitem{wang2009self}
X.~Wang and M.~Lemmon.
\newblock {Self-triggered feedback control systems with finite-gain L2
  stability}.
\newblock {\em {IEEE} Trans. Autom. Control}, 45(3):452--467, 2009.

\bibitem{wang2011}
X.~Wang and M.~Lemmon.
\newblock {Event-triggering in distributed networked control systems}.
\newblock {\em {IEEE} Trans. Autom. Control}, 56(3):586--601, 2011.

\end{thebibliography}
\end{document}